\documentclass[12pt]{article}
\usepackage[authoryear,round]{natbib}
\usepackage{graphicx}

\usepackage{amsthm}
\usepackage{amsmath,amssymb}
\usepackage{latexsym}
\usepackage{adjustbox}
\usepackage{tabularx}
\usepackage{multicol}

\usepackage{placeins}

\newtheorem{theorem}{Theorem}

\newtheorem{lemma}{Lemma}

\makeatletter
\renewenvironment{proof}[1][\proofname]{%
   \par\pushQED{\qed}\normalfont%
   \topsep6\p@\@plus6\p@\relax
   \trivlist\item[\hskip\labelsep\bfseries#1\@addpunct{.}]%
   \ignorespaces
}{%
   \popQED\endtrivlist\@endpefalse
}
\makeatother

\usepackage[utf8]{inputenc}

\usepackage{mathtools}
\mathtoolsset{showonlyrefs,showmanualtags}

\newtheorem{corrolary}{Corrolary}

\bibliographystyle{plainnat}

\providecommand{\keywords}[1]{\textbf{\textit{Index terms---}} #1}

\title{Data-adaptive smoothing for optimal-rate estimation of possibly non-regular parameters}

\author{Aurélien F. Bibaut \and Mark J. van der Laan}

\date{\today}
 
\begin{document}

\title{Data-adaptive smoothing for optimal-rate estimation of possibly non-regular parameters}

\author{Aurélien F. Bibaut \and Mark J. van der Laan}

\maketitle

\begin{abstract}
We consider nonparametric inference of finite dimensional, potentially non-pathwise differentiable target parameters. In a nonparametric model, some examples of such parameters that are always non pathwise differentiable target parameters include probability density functions at a point, or regression functions at a point. In causal inference, under appropriate causal assumptions, mean counterfactual outcomes can be pathwise differentiable or not, depending on the degree at which the positivity assumption holds.

In this paper, given a potentially non-pathwise differentiable target parameter, we introduce a family of approximating parameters, that are pathwise differentiable. This family is indexed by a scalar. In kernel regression or density estimation for instance, a natural choice for such a family is obtained by kernel smoothing and is indexed by the smoothing level. For the counterfactual mean outcome, a possible approximating family is obtained through truncation of the propensity score, and the truncation level then plays the role of the index.

We propose a method to data-adaptively select the index in the family, so as to optimize mean squared error. We prove an asymptotic normality result, which allows us to derive confidence intervals. Under some conditions, our estimator achieves an optimal mean squared error convergence rate. Confidence intervals are data-adaptive and have almost optimal width.

A simulation study demonstrates the practical performance of our estimators for the inference of a causal dose-response curve at a given treatment dose.
\end{abstract}

\keywords{Nonparametric model, non-regular inference, pathwise differentiability, optimal smoothing, asymptotic normality, kernel density estimation, causal dose-response curve, counterfactual mean outcome}

\section{Introduction}

\subsection{Statistical formulation and estimation problem}

We observe $n$ i.i.d. observations $O_1,...,O_n$ of a random variable $O$ following a probability distribution $P_0$ (the data-generating distribution). We assume that $P_0$ belongs to a set $\mathcal{M}$ of distribution probabilities on the observation space $\mathcal{O} \subset \mathbb{R}^d$, $d \in \mathbb{N}^*$. The set $\mathcal{M}$ is called the statistical model. In this paper, we consider semi-parametric or nonparametric models.

Our goal is to estimate a parameter $\Psi(P_0)$ of the data-generating distribution $P_0$. The functional $\Psi : \mathcal{M} \rightarrow \mathbb{R}$ is called the target parameter mapping.

Let $P_n$ be the empirical probability distribution based on observations $O_1,...,O_n$. In some cases, $P_n$ does not belong to the model $\mathcal{M}$ on which the mapping $\Psi$ is defined. In theses cases, one usually uses an initial estimator $\hat{P}(P_n)$ of $P_0$, which maps an empirical distribution into the smoother model $\mathcal{M}$. When $\Psi$ can be defined directly on the set of empirical distributions, we might just take $\hat{P}$ to be the identity mapping.

\paragraph{Example 1: probability density function at a point.} In this example, we consider the problem of estimating a univariate probability denisity function at a point. In this context, $\mathcal{O} \subset \mathbb{R}$. We will set $\mathcal{M}$ to be the set of probability distributions that admit a density with respect to (w.r.t.) the Lebesgue measure: $\mathcal{M} \equiv \{ P : \exists p \ 	\frac{dP}{d\mu} = p \}$. Given an $x \in \mathcal{O}$, we consider the target parameter $\Psi_{x}(P_0) \equiv p(x) \equiv \frac{dP}{d\mu}(x)$.

\paragraph{Example 2: counterfactual outcome under known treatment mechanism.} In this example, we observe $n$ i.i.d. realizations $O_1,...,O_n$, corresponding to $n$ indivudals. For each individual $i$, $O_i = (W_i, A_i, Y_i)$, $W_i$ represents a set of baseline covariates (e.g. age, sex, biomarker measurements), $A_i$ is binary indicator of whether individual $i$ received a given drug, and $Y_i$ is a binary health outcome (e.g. $Y_i = 1$ patient $i$ is still sick after some time, $Y_i = 0$ if not). 

In this example, our goal will be to estimate $\Psi(P_0) \equiv E_{P_0} E_{P_0} [Y | A = 1, W]$. Under some causal assumptions, this target parameter is equal to the mean counterfactual outcome in the situation in which every patient receives treatment. 

We assume that the probability distributions in our model have densities with respect to an appropriate dominating measure $\mu$ (a product of Lebesgue measures and counting measures): for all $P \in \mathcal{M}$, there exists $p$ such that $p = \frac{dP}{d\mu}$. For  $o$ in $\mathcal{O}$, we have the following factorization: $p(o) = p(y|a,w)p(a|w)p(w)$. For every $P \in \mathcal{M}$, we will denote $q_Y(o) \equiv p(y|a,w)$, $q_W(w) = p(w)$, $q(o) = q_Y(o) q_W(w)$, $g(o) = p(a|w)$. In this example, we will assume that $g_0$ is known.

\paragraph{Example 3: dose-response curve at a fixed dose value, under known treatment mechanism.} We use the same notation for the observed data as in example 1, with the difference that $A_i$ is now continuous and takes values in $[0,1]$. We use the same notations as in example 2 when applicable. We assume that $g_0$ is known here too.

Our target parameter of interest is $\Psi_{a_0}(P_0) \equiv E_{P_0} E_{P_0} \left[Y | A = a_0, W \right]$, where $a_0 \in [0,1]$. Under appropriate causal assumptions, this represents the mean counterfactual outcome in a world in which every patient receives treatment dose $a_0$.

\medskip

\subsection{Pathwise differentiability and efficiency bound}

Pathwise differentiability relative to $\mathcal{M}$ (see \cite{pfanzagl1990} and \cite{BickelKlaasesenRitovWellner1993}) of a target parameter $\Psi$ at $P \in \mathcal{M}$ implies the  first following order expansion (\cite{marks_phdthesis}, \cite{appendix_TLbook}, \cite{first_order_expansion-vdL1995}):
\begin{equation}
\Psi(P) - \Psi(P_0) = -P_0 D^*(P) + R(P, P_0),
\end{equation} where
\begin{itemize}
\item $D^*(P) \in L_2(P)$, and is called the canonical gradient of $\Psi$ at $P$,
\item $R(\cdot, \cdot)$ is a second order term in the sense that, for any parametric submodel $\{P_{\epsilon} : \epsilon \} \subset \mathcal{M}$ such that $P_{\epsilon = 0} = P$, we have that $\epsilon^{-1} R(P_\epsilon, P) \xrightarrow{\epsilon \rightarrow 0} 0$.
\end{itemize}

In many practical situations second order term $R(\cdot, \cdot)$ has the double robustness structure, i.e. $R(P, P_0) = \int (Q(P) - G(P_0)) \times ( G(P) - G(P_0) ) \times H(P, P_0) dP_0$ for some parameters $Q$ and $G$.

Efficiency theory \citep{BickelKlaasesenRitovWellner1993} tells us that the asymptotic variance of any regular estimator of $\Psi(P)$ is at least as large as $Var_P(D^*(P))$.

\paragraph{Example 1, continued.} Under infinite dimensional models, the probability density function (p.d.f) at a point is not pathwise differentiable. 

\paragraph{Example 2, continued.} Under some data-generating distributions the counterfactual mean outcome is not pathwise differentiable either. Even when it is pathwise differentiable, researchers often prefer to target other parameters than $\Psi(P_0)$, as the variance of the canonical gradient of $\Psi$ at $P_0$ can be large if the propensity score $g(a|w)$ is small in some areas of the population.

\paragraph{Example 3, continued.} Under infinite dimensional models, the dose-response curve at a fixed treatment dose is not pathwise differentiable.

\subsection{Smoothed target parameters}

When the target parameter of interest is non-pathwise differentiable, or has large variance of its canonical gradient, one approach to estimation consists in introducing a target "smoothed" version of the target parameter. We will consider a family 
\begin{equation}
\mathcal{F} \equiv \{\Psi_\delta : \mathcal{M} \rightarrow \mathbb{R} \  |\  \delta \in [0, \delta_0], \Psi_{0} = \Psi \}.
\end{equation}

We will assume that for any $\delta > 0$, the target parameter $\Psi_{\delta}$ is pathwise differentiable at any $P \in \mathcal{M}$.

\medskip

Let us present natural smoothing schemes in the context of the three above examples.

\paragraph{Example 1, continued.} The density at a point $x$, under $P$, which we denoted $\Psi_{x}(P)$ can be approximated by $\Psi_{x, \delta}(P) \equiv E_P [\delta^{-1}K((O - x) / \delta)]$, where $K : \mathbb{R} \rightarrow \mathbb{R}$ is a smooth non-negative function, such that $\int K = 1$ and $\int K^2 < \infty$. Observe that $\Psi_{x, \delta}(P_0)$ is the target parameter of the kernel density estimator with kernel $K$ and bandwidth $\delta$. Under some mild smoothness condition on the underlying density, $\Psi_{x, \delta}(P) \xrightarrow{\delta \rightarrow 0} \Psi_x(P)$. The canonical gradient of $\Psi_{x, \delta}$ at $P$ is given by 
$$D^*_{x, \delta}(P) = K_{\delta, x}(O - x) - P K_{\delta, x}(O - x),$$ where we define $K_{\delta, x} \equiv \delta^{-1} K((O - x) / \delta)$. Note that $Var_P D_{x, \delta}(P) \xrightarrow{\delta \rightarrow 0} \infty$

\paragraph{Example 2, continued.} When using estimators such as the Inverse Probability of Treatment Weighted estimator (IPTW estimator, see for instance \cite{robins-hernan-brumback2000}), causal inference practitioners often truncate the propensity scores: in other words, they replace $g(A_i | W_i)$ by $g_{0, \delta}(A_i|W_i) \equiv \max(g(A_i|W_i), \delta)$, for some fixed $\delta > 0$. Truncation has the effect of reducing the variance of the estimators. However it makes the IPTW consistent for another target parameter, $\Psi_{\delta}(P_0) \equiv E_{P_0} [g_0(A|W)/g_{0, \delta} (A|W) \times \bar{Q}(1,W)]$, where $\bar{Q}(a, w) = E_P[Y| A = a, W = w]$. With this definition, we have that $\Psi_{0}(P) = \Psi(P)$. The canonical gradient of $\Psi_{\delta}(P)$ is given by $$D^*_\delta(P) = \frac{A}{g_{0, \delta} (1|W)} (Y - \bar{Q}(A, W)) + \frac{g_0(1|W)}{g_{0, \delta}(1|W)} \bar{Q}(1, W) - \Psi_{\delta}(P).$$ One can readily show that $Var_P\left(D^*_\delta(P)\right)$ increases as $\delta$ decreases, and that under some distributions $P$, it tends to infinity as $\delta$ converges to zero.

\paragraph{Example 3, continued.} One can obtain a pathwise differentiable approximation of $\Psi_{a_0}(P)$ by smoothing. We define $\Psi_{a_0, \delta}(P) \equiv \int_{a} K_{\delta, a_0}(a) \bar{Q}(a, W) da$, where $K_{\delta, a_0}(a) \equiv \delta^{-1} K((a - a_0)/\delta )$, and $K$ is the kernel introduced in example 1 above. 
The canonical gradient of $\Psi_{a_0, \delta}$ is given by
$$D^*_{a_0, x}(P) \equiv \frac{K_{a_0, \delta}(A)}{g_0(A|W)} (Y - \bar{Q}(a, W)) + \int_a K_{a_0, \delta}(a) \bar{Q}(a, W) da - \Psi_{a_0, \delta}(P).$$
We have that $\Psi_{a_0, \delta}(P) \xrightarrow{\delta \rightarrow 0} \Psi_{a_0}(P)$. One can readily show that $Var_P(D^*_\delta(P)) \allowbreak \sim C \delta^{-1}$ for some positive constant $C$, and that bias converges to zero as $\delta$ tends to zero.

\bigskip

Since the all the above smoothed parameters are pathwise differentiable, we have that
\begin{equation}\label{first_order_expansion-smoothed}
\Psi_\delta(P) - \Psi_\delta(P_0) = -P_0 D^*_\delta(P) + R_\delta(P, P_0),
\end{equation}
where $D^*_\delta(P)$ is the canonical gradient of $\Psi_\delta$ at $P$ and $R_\delta(P, P_0)$ is a second order term such that $R_\delta(P, P) = 0$ for all $P$.

The smoothed parameters can be estimated at root-$n$ rate. However, smoothing introduces bias with respect to the parameter one really wants to estimate. Therefore, consistent estimation requires to use a smoothing parameter $\delta_n$ that tends to zero. Ideally, one would want to choose a value of $\delta_n$ that minimizes mean squared error with respect to our target $\Psi(P_0)$. If one uses an asymptotically linear efficient estimator of $\Psi_{\delta_n}(P_0)$, the mean squared error with respect to $\Psi(P_0)$ roughly decomposes as
\begin{equation}
MSE_n(\delta) \approx \frac{1}{n} \sigma_0^2(\delta) + b_0(\delta)^2,
\end{equation}
where $\sigma_0^2(\delta) = Var_{P_0}\left(D^*_{\delta_n}(P_0)\right)$ and $b_0(\delta) = \Psi_{\delta_n}(P_0) - \Psi(P_0)$.

\subsection{Proposed method}

\paragraph{Notations.} First, we will say that two random sequences $(a_n)$ and $(b_n)$ are \textit{asymptotically equivalent in probability}, which we will denote $a_n \sim_P b_n$, if $a_n / b_m \xrightarrow{P} 1$. Secondly, for a function $f$ of a real variable $x$, we will denote $f'(x) \equiv (df/dx)(x)$, whenever this quantity exists.

\medskip

Let us now present our approach. We start out with a class of estimators $\{ \widehat{\Psi}_n(\delta) : \delta \geq 0 \}$ where, for every $\delta \geq 0$, $\widehat{\Psi}_n(\delta)$ is an regular, asymptotically linear efficient, double robust estimator of $\Psi_\delta(P_0)$. For instance, we might take $\widehat{\Psi}_n(\delta)$ to be a one-step estimator \citep{BickelKlaasesenRitovWellner1993} of $\Psi_\delta(P_0)$. Then we propose a data-adaptive selector $\hat{\delta}_n$ of the optimal smoothing level. Finally, we return $\widehat{\Psi}_n(\hat{\delta}_n)$ as our estimate of $\Psi(P_0)$.

Under some mild assumptions, we will prove that
 $\widehat{\Psi}_n(\hat{\delta}_n)$ is asymptotically normally distributed. Under some additional assumptions, we will show that our estimator $\widehat{\Psi}_n(\hat{\delta}_n)$ is optimal in mean squared error rate (w.r.t. $\Psi(P_0)$) among all estimators of the form $\widehat{\Psi}_n(\delta_n)$ where $\delta_n \rightarrow 0$.

\medskip

We now describe the rationale behind our method. It is easiest to understand by first looking at why the most natural methods fail.

The seemingly easiest way to select the smoothing parameter would be to minimize, with respect to $\delta$, an estimate of the mean squared error $MSE_n(\delta)$. However natural estimators of $MSE_n(\delta)$ are hard to find in general. In the case where $\Psi$ is not pathwise differentiable at $P_0$, estimating $MSE_n(\delta)$ has to be at least as hard to estimate as estimating $\Psi(P_0)$ itself. In fact, it decomposes as bias w.r.t. $\Psi(P_0)$, which has to be as hard to estimate as $\Psi(P_0)$ itself, plus a variance term $n^{-1} \sigma^2_0(\delta)$, which is pathwise differentiable.

We examined a tempting fix to the previous approach. As estimation of bias is problematic, we turned to a criterion that involves only estimating $\sigma(\delta)$ and small variations of $\Psi_{\delta}(P_0)$  for small $\delta$ gaps. Unlike $MSE_n(\delta)$, such a criterion would thus be pathwise differentiable, as it would only involve pathwise differentiable quantities. The following observation led us to such a criterion: under some mild smoothness assumptions, the solution to
$n^{-1} \sigma_0'(\delta) + b_0'(\delta) = 0$ converges to zero at the same rate as $\delta^*_{0,n}$. (One way to understand this is that in many problems, for instance in bandwidth optimization in density estimation, at the optimal smoothing level, bias and variance are of same order. Therefore, taking the derivative of $MSE_n(\delta)$ and simplifying leads to such a criterion, up to some constant factors). We thus reckoned that using an estimate of the finite difference approximation
$n^{-1/2}(\sigma_0(\delta + \Delta) - 
\sigma_0(\delta)) / \Delta + (b_0(\delta + \Delta) - b_0(\delta)) / \Delta = 0$, for some appropriately small $\Delta$ should give a good estimate of $\delta^*_{0,n}$.  We expected that estimating its left hand side and finding a root would give us a smoothing parameter that converges to zero at about the same rate as $\delta_{0,n}^*$. However, both simulations and analytic calculations show that for $\delta \lesssim \delta^*_{0,n}$, the standard deviation of the canonical gradient of the criterion exceeds the criterion itself. Therefore, for $\delta \lesssim \delta^*_{0,n}$, the standard deviation of the estimated criterion will be larger than the criterion itself. This is visualized on figure \ref{figure-signal-to-noise-ratio} below.

\begin{figure}[h]
\centering
\includegraphics[width = 8cm, height=6cm]{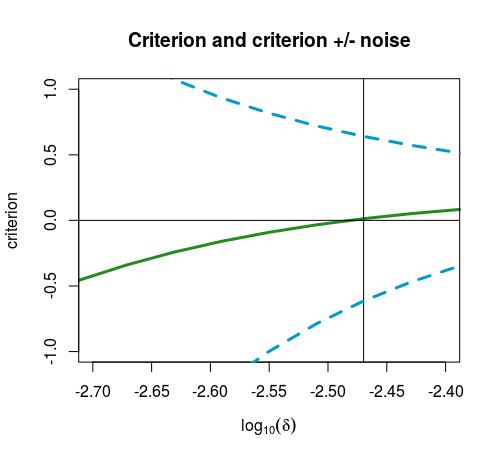}
\caption{The green line represents the criterion $\frac{1}{\sqrt{n}}\sigma'(\delta) + b'(\delta)$. The dotted blue lines represent pointwise confidence bands for the criterion, based on standard deviation of the canonical gradient. This was obtained in the setting of the mean counterfactual outcome problem described in example 2. We used a gap value $\Delta(n, \delta)$ that depends on $\delta$ and $n$, and which was chosen so as to minimize mean squared error of the estimated criterion w.r.t. the true criterion. The vertical black line corresponds to $\delta = \delta^*_{0,n}$. One can observe on this figure that for $\delta \lesssim \delta^*_{0,n}$ the width of the confidence interval largely exceeds the value of the targeted criterion.}
\label{figure-signal-to-noise-ratio}
\end{figure}

\medskip

Our proposed method still aims at solving $MSE'_n(\delta) = 0$, while avoiding the pitfalls we just mentioned. Otherwise stated, we want to estimate, potentially up to a constant, $\delta^*_{0,n}$ that solves $MSE'_n(\delta) = 2 n^{-1} \sigma_0(\delta) \sigma'_0(\delta) + 2 b_0(\delta) b'_0(\delta) = 0$. Our approach relies on several observations.

First, under smoothness assumptions, derivatives can be approximated by finite differences: for small $\Delta$, $b'_0(\delta) \approx \left(\Psi_{\delta + \Delta}(P_0) - \Psi_{\delta}(P_0)\right) / \Delta$, and $\sigma_0'(\delta) \approx  \left(\sigma_0(\delta + \Delta) - \sigma_0(\delta)\right) / \Delta$.

Second, while $b_0(\delta)$ is hard to access, its rate in $\delta$ can be linked to the rate in $\delta$ of $b'_0(\delta)$. Under smoothness assumptions, if $b'_0(\delta) \asymp \delta^{\beta - 1}$ then $b_0(\delta) \asymp \delta^\beta$.

Third, as we mentioned above, at a given $n$, signal to noise ratio for $\sigma_0(\delta)$, $\sigma_0'(\delta)$ and $b_0'(\delta)$ is low for $\delta \lesssim \delta^*_{0,n}$ but high for  $\delta \gg \delta^*_{0, n}$. We can thus perform consistent estimation of $\sigma_0(\tilde{\delta}_n)$, $\sigma'_0(\tilde{\delta}_n)$, $b'_0(\tilde{\delta}_n)$ for a sequence $(\tilde{\delta}_n)$ that converges to zero slower than $\delta_{0,n}^*$.

Finally, under smoothness assumptions, the asymptotic behaviors of $b_0'(\delta)$, $\sigma_0(\delta)$, $\sigma'_0(\delta)$ as $\delta$ converges to zero can be learned by estimating these functions at small values of $\delta$. For instance, if $\sigma_0(\delta) \asymp \delta^{-\gamma}$, then the rate $\gamma$ can be learned by estimating $(\log \sigma(\tilde{\delta}_{1,n}) - \log \sigma_0(\tilde{\delta}_{2,n}) ) / (\log \tilde{\delta}_{1,n} - \log \tilde{\delta}_{2,n})$.

\medskip

These observations lead us to the following method. (We use simplified notations for now to ease exposition). We assume that the asymptotic standard deviation $\sigma_0(\delta)$ of the estimator, the derivative $\sigma'_0(\delta)$ of this asymptotic standard deviation, and the derivative of the asymptotic bias $b_0'(\delta)$ behave as polynomials in $\delta$ as $\delta$ tends to zero: $\sigma_0(\delta) \sim C_\sigma \delta^{-\gamma}$, $\sigma'_0(\delta) \sim C_{\sigma'} \delta^{-\nu}$ and $b_0'(\delta) \sim C_{b'}\delta^{\beta - 1}$. Note that this implies that $b_0(\delta) \sim C_b \delta^{\beta}$. We consider two positive sequences $\tilde{\delta}_{1,n}$ and $\tilde{\delta}_{2,n}$ that converge to zero slowly (this will be made precise later). Estimating $\sigma_0(\tilde{\delta}_{i,n})$, $(\sigma_0(\tilde{\delta}_{i,n} + \Delta_n) - \sigma_0(\tilde{\delta}_{i,n})) / \Delta_n$ and $(\Psi_{\tilde{\delta}_{i,n} + \Delta_n}(P_0) - \Psi_{\tilde{\delta}_{i,n}}(P_0)) / \Delta_n$, for $i = 1,2$ and an appropriate sequence $\Delta_n$, allows us to estimate the powers $\beta$, $\gamma$ and $\nu$, as well as the constants $C_\sigma$, $C_{\sigma'}$ and $C_{b'}$. In other words, by computing estimates along slow sequences $\tilde{\delta}_{i,n}$, $i=1,2$, we learn the asymptotics as $\delta$ converges to zero of $b_0'$, $\sigma_0'$, $\sigma_0$. Using the asymptotic expressions of $\sigma_0$, $\sigma_0'$, $b_0'$, $b_0$, we can express $\delta_{0,n}^*$ from the constants and the powers $\beta$, $\nu$ and $\gamma$. Replacing these by our estimates, we obtain an estimated optimal smoothing rate $\hat{\delta}_n$. We then compute an estimate of the smoothed parameter $\Psi_{\hat{\delta}_n}(P_0)$ using an asymptotically linear efficient, double robust estimator, such as a one-step estimator.

\subsection{State of the art}

\paragraph{Data-adaptive smoothing in density estimation and regression.} There is an abundant literature dealing with adaptive smoothing in nonparametric statistical estimation and prediction. Note that a lot of work in these areas is concerned with estimation of the entire regression function or density function, whereas we address estimation on finite dimensional parameters, such as these density or regression functions at a given point. \cite{stone1984} proposes an asymptotically optimal bandwidth selector in kernel density estimation. This selector has a leave-one-out cross validation interpretation.  \cite{hardle-marron1985} provides a method to select the bandwidth in nonparametric kernel regression, which is asymptotically optimal in mean integrated square error (MISE). \cite{silverman1984} introduces a bandwidth selector that is computationally efficient and asymptotically optimal in MISE. Both of these latter methods rely on leave-one-out cross validation.
\cite{hardle1993} gives a broad review of adaptive smoothing in nonparametric regression. \cite{vdl-dudoit-keles2004} and \cite{vdL-dudoit-vdVaart2006} provide asymptotic optimality guarantees for likelihood-based V-fold cross-validation. Bandwidth selection in nonparametric regression and density estimation are immediate applications.

\paragraph{Confidence intervals for density and regression function.} \cite{bickel-rosenblatt1973} give a result which allows the construction of uniform confidence bands. \cite{hall1992} presents two bootstrap-based methods to construct pointwise confidence intervals for the density function at a point. Key to the two methods is offseting the bias resulting from smoothing. The first one estimates bias explicitely, through a second order derivative estimation. The second one resorts to an undersmoothing scheme, which makes bias vanish relatively to confidence interval witdh. However these methods are not data-adaptive: prior knowledge of the smoothness of the density is assumed. \cite{low1997} gives minimax results for the construction of confidence intervals in nonparametric problems. Thise make clear that constructing adaptive confidence intervals that are valid over large classes of densities is a hard problem in general.
However \cite{gine-nickl2010} detail the construction of data-adaptive uniform confidence bands for a density function. Their findings are consistent with \cite{low1997} in that that they consider special nonparametric classes of densities.

\paragraph{Optimal smoothing in causal inference problems.} 
The need for smoothing non-pathwise differentiable target parameters arise naturally in many causal inference problems, in particular when considering a continous treatment. \cite{diaz2012} propose a super-learning \citep{polley-vdl2010} based approach to estimation of causal dose-response curves. \cite{edwardKennedy2016} recasts the problem of estimation of a dose-response curve as a kernel regression problem. They select the bandwidth data-adaptively and they provide pointwise confidence intervals.

\paragraph{Propensity score truncation in causal inference.} As explained above, large asymptotic variance or even non-pathwise differentiabily can arise in causal inference when propensity scores take small values. A common approach consists in truncating these propensity scores \citep{petersen2011}. \cite{vdL-bembom2008} proposes a method to data-adaptively select the truncation level in the case where the causal target parameter of interest is pathwise differentiable.

\paragraph{Exceptional laws in optimal dynamic treatments.} The mean counterfactual outcome under an optimal treatment rule is in general non-pathwise differentiable when there is a stratum of the population in which the treatment is neither beneficial nor harmful. \cite{luedtke-vdL2016} manage to provide root-$n$ rate inference in this situation.

\subsection{Contributions and article organization}

In this paper we provide a generally applicable method to (optimally) select the indexing parameter of an approximating family, as presented in section 1.3. An asymptotic normality result, construction of confidence intervals, and an asymptotic optimality result are given under some general (i.e. non problem-specific) conditions. To the best of our knowledge, no such generality is claimed in existing works. 

We check that, in our three aforementioned examples, use of some widely available estimators make our conditions hold. We illustrate the practical performance of our method in the dose-response curve example.

The remainder of this article is organized as follows. In section 2, we introduce the key notations and we define estimators of some of quantities introcuded. We then use these estimators to define an estimator of the target parameter of interest. In section 3, we give the theoretical guarantees of our method. In section 4, we check the assumptions for our three aforementioned examples. In section 5, we report simulations results in the case of the dose-response curve example. Section 6 discusses the method, practical as well as theoretical potential improvements. Most of the proofs are deferred to the appendix.

\section{Estimator}

\subsection{Sample splitting and notations}

We split our sample into three subsamples  $S_{1,n} \equiv \{O_i : i = 1,..., l_{1, n} \}$, $S_{2,n} \equiv \{O_i : i = l_{1, n} + 1,..., l_{2, n} \}$, and $S_{3,n} \equiv\{O_i : i = l_{2, n} + 1,..., n \}$, for some $l_{1, n} \equiv p_1 n$ and $l_{2,n} \equiv p_2 n$ for $0 < p_1 < p_2 < 1$. We will denote $l_{3,n} \equiv n$.

In estimating the optimal smoothing level, we will use only the first two subsamples $S_{1,n}$ and $S_{2,n}$. As we will explain below, we use cross-validated one-step estimators \citep{BickelKlaasesenRitovWellner1993} to this end. $S_{1,n}$ is used to compute an initial estimate $\widehat{P}_{1,n}$ of the likelihood, at which we evaluate canonical gradients. We then average these canonical gradients under $P_{2,n}$, the empirical distribution defined by subsample $S_{2,n}$.

Next, we compute a one-step estimate of the smoothed parameter indexed by the estimated optimal smoothing level. We use $S_{1,n} \cup S_{2,n}$ to give an estimate $\widehat{P}_{2,n}$ of the $P_0$, at which we evaluate the appropriate canonical gradient. We then average this latter under $P_{3,n}$, the empirical distribution defined by subsample $S_{3,n}$.

\medskip

For $i \in \{1,2\}$, let $\sigma^2_{i, n}(\delta) \equiv P_0 (D^*_\delta (\widehat{P}_{i,n}) - P_0 D^*_\delta (\widehat{P}_{i,n}) )^2$.

We denote $D^*_{\delta, \infty}$ the limit in $L_2(P_0)$-norm, as $n$ converges to $\infty$, if it exists, of $D^*_\delta ( \widehat{P}_n )$.
Let $\sigma^2_\infty(\delta) \equiv P_0 (D^*_{\delta, \infty} - P_0 D^*_{\delta, \infty} )^2$.

We define $H_{0,\delta}(P) \equiv (D^*_\delta(P) - P_0 D^*_\delta(P) )^2$ and $H_{0, \delta, \infty} \equiv (D^*_{\delta, \infty} - P_0 D^*_{\delta, \infty})^2$, which will be relevant in the asymptotic analysis of our estimator. 

\medskip

Let us now introduce our key smoothness assumption and the pertaining notations.

\medskip

\textbf{A1}. There exist $C_{b',0}$, $C_{\sigma,\infty}$, $C_{\sigma',\infty}$, $C_H$, $\beta_0 \geq 0$, $\gamma_{0, \infty} \geq 0$, $\nu_{0, \infty} \geq 0$, $\eta_{0, \infty} > 0$ such that 
\begin{gather}
\sigma_\infty(\delta) \sim C_{\sigma,\infty} \delta^{-\gamma_{0, \infty}}, \qquad \qquad
\sigma_\infty'(\delta) \sim C_{\sigma',\infty} \delta^{\nu_{0, \infty}},\\
\qquad
b_0'(\delta) \sim C_{b',0} \delta^{\beta_0}, \qquad \text{and} \qquad P_0 \left(H_{0,\delta, \infty} - P_0 H_{0,\delta, \infty} \right)^2 \sim C_H \delta^{-\eta_{0, \infty}}.
\end{gather}

Furthermore, there exist $k_0 > 0$ and $k_1 > 0$ such that $b''_0(\delta) = O_P \left(\delta^{-k_0}\right)$ and $\sigma''_\infty(\delta) = O_P \left(\delta^{-k_1}\right)$.

\subsection{Estimator definition}

We now define estimators of the rates in $\delta$ of $b_0'(\delta)$, $\sigma_\infty(\delta)$ and $\sigma'_\infty(\delta)$. These will rely on estimators of $\sigma_\infty(\delta)$, $\sigma_\infty'(\delta)$ and $b_0(\delta)$ along slowly vanishing sequences $\tilde{\delta}_{1,n}$ and $\tilde{\delta}_{2,n}$. 

For the sake of rate estimation, we use the cross-validated one-step estimator
\begin{equation}
\widehat{\Psi}_{2,n}(\delta) \equiv \Psi_{\delta}(\widehat{P}_{1,n}) + P_{2,n} D^*_{\delta}(\widehat{P}_{1,n})
\end{equation}

to estimate $\Psi_{\delta}(P_0)$, 
\begin{equation}
\widehat{b'}_{2,n}(\delta, \Delta) \equiv \frac{\widehat{\Psi}_{2,n}(\delta + \Delta) - \widehat{\Psi}_{2,n}(\delta)}{\Delta}
\end{equation}

as an estimator of $\left(b_0(\delta + \Delta)- b_0(\delta)\right) / \Delta$,
\begin{equation}
\widehat{\sigma}^2_{2,n}(\delta) \equiv P_{2,n} (D^*_{\delta} (\widehat{P}_{1,n}) - P_{2,n} D^*_{\delta}(\widehat{P}_{1,n}) )^2
\end{equation}

as an estimator of $\sigma^2_\infty(\delta)$, and
\begin{equation}
\widehat{\sigma'}_{2,n}(\delta, \Delta) \equiv \frac{\widehat{\sigma}_{2,n}(\delta + \Delta) - \widehat{\sigma}_{2,n}(\delta)}{\Delta}
\end{equation}

as an estimator of $\left(\sigma_\infty(\delta + \Delta) - \sigma_\infty(\delta) \right) / \Delta$.

\medskip

Let $\Delta_{n} \equiv \left(l_{2,n} - l_{1,n}\right)^{-1/4}$. We take the aforementionned sequences $\tilde{\delta}_{1,n}$ and $\tilde{\delta}_{2,n}$ to be vanishing sequences that go to zero at a slow enough rate, which will be made precise later. We estimate the rates $\beta_0$, $\gamma_{0, \infty}$ and $\nu_{0, \infty}$, and the constants $C_{b',0}$, $C_{\sigma,\infty}$, $C_{\sigma',\infty}$ using respectively
\begin{gather}
\widehat{\beta}_n \equiv \frac{\log \widehat{b'}_{2,n}(\tilde{\delta}_{2,n}, \Delta_n) - \log \widehat{b'}_{2,n}(\tilde{\delta}_{1,n}, \Delta_n)}{\log \tilde{\delta}_{2,n} - \log \tilde{\delta}_{1,n}},\label{beta_hat_definition}\\
\widehat{\gamma}_n \equiv \frac{\log \widehat{\sigma}_{2,n}(\tilde{\delta}_{2,n}) - \log \widehat{\sigma}_{1,n}(\tilde{\delta}_{1,n})}{\log \tilde{\delta}_{2,n} - \log \tilde{\delta}_{1,n}},\label{gamma_hat_definition}\\
\widehat{\nu}_n \equiv \frac{\log \widehat{\sigma'}_{2,n}(\tilde{\delta}_{2,n}, \Delta_n) - \log \widehat{\sigma'}_{1,n}(\tilde{\delta}_{1,n}, \Delta_n)}{\log \tilde{\delta}_{2,n} - \log \tilde{\delta}_{1,n}}\label{nu_hat_definition},
\end{gather}
and
\begin{gather}
\widehat{C}_{b',n} \equiv \widehat{b'}_{2,n}(\tilde{\delta}_{3,n}, \Delta_n) \tilde{\delta}_{3,n}^{-\hat{\beta}_n},\\
\widehat{C}_{\sigma, n} \equiv \widehat{\sigma}_{2,n}(\tilde{\delta}_{3,n}) \tilde{\delta}_{3,n}^{\hat{\gamma}_n},\\
\widehat{C}_{\sigma', n} \equiv \widehat{\sigma'}_{2,n}(\tilde{\delta}_{3,n}) \tilde{\delta}_{3,n}^{-\hat{\nu}_n}.
\end{gather}

We now turn to the estimation of the optimal smoothing rate. Following the arguments made earlier, we should have $MSE'_n(\delta) \approx 2 (l_{3,n} - l_{2,n})^{-1} \times\sigma_0(\delta) \sigma'_0(\delta) + 2 b_0(\delta) b'_0(\delta)$ (as our cross-validated estimator uses only one split, we expect its variance to scale as $ (l_{3,n} - l_{2,n})^{-1}$ instead of $n^{-1}$). Under \textbf{A1}, it is thus natural to expect that, asymptotically, $MSE_n'(\delta_{0,n}^*) \sim C_{b',0}^2 \beta_0^{-1} {\delta_{0,n}^*}^{2\beta_0 - 1} + (l_{3,n} - l_{2,n})^{-1} C_{\sigma', \infty} C_{\sigma, \infty} {\delta_{0,n}^*}^{-\gamma_{0, \infty} + \nu_{0, \infty}}$. (We will show that, under some additional assumptions, this indeed holds.) This would entail that $\delta_{0,n}^* \sim (C_{\sigma, \infty}, C_{\sigma', \infty} \beta_0 C_{b',0}^{-2})^{r_0} \left(l_{3,n} - l_{2,n}\right)^{-r_0}$, with $r_{0, \infty} \equiv (2 \beta_0 - 1 + \gamma_{0, \infty} - \nu_{0, \infty})^{-1}$. This motivates the estimator
\begin{equation}
\widehat{r}_n \equiv \frac{1}{2 \widehat{\beta}_n - 1 + \widehat{\gamma}_n - \widehat{\nu}_n}
\end{equation}

to estimate the optimal smoothing rate and 
\begin{equation}
\widehat{C}_n \equiv \left(\frac{\widehat{C}_{\sigma, n} \widehat{C}_{\sigma', n} \widehat{\beta}_n }{\widehat{C}_{b',n}}\right)^{\widehat{r}_n}
\end{equation}

as an estimator of the constant in the optimal smoothing level $\delta_{0,n}^*$.

\medskip

We finally present our estimator of $\Psi(P_0)$. For a small $\epsilon > 0$, let us define $ \hat{\delta}_{\epsilon, n} \equiv \widehat{C}_n \left(l_{3,n} - l_{2,n}\right)^{-\widehat{r}_n - \epsilon}$. As we will see in the next section, under appropriate assumptions, this sequence is asymptotically slightly faster than the optimal smoothing level $\delta_{0,n}^*$. We define our estimator of $\Psi(P_0)$ as the cross-validated one-step estimator
\begin{equation}
\widehat{\Psi}_n(\hat{\delta}_{\epsilon, n}) \equiv \Psi(\widehat{P}_{2,n}) + P_{3,n} D^*_{\widehat{\delta}_{\epsilon, n}}(\widehat{P}_{2,n}).
\end{equation}

We use a slightly faster-than-optimal smoothing rate $\widehat{r}_n + \epsilon$ in order to make bias vanish relatively to standard error. As it is possible to estimate standard error, this scheme enables the construction of confidence intervals.

\section{Asymptotic analysis}

\subsection{Asymptotic analysis of the smoothing parameter selector}

Consistency of the rate estimators \eqref{beta_hat_definition}, \eqref{gamma_hat_definition}, and \eqref{nu_hat_definition} requires mild additional assumptions that we present here. First, we need that, if we take $\tilde{\delta}_n$ that converges to zero slowly enough, then $\sigma_{1,n}(\tilde{\delta}_n)$, $\sigma'_{1,n}(\tilde{\delta}_n)$, $\sigma''_{1,n}(\tilde{\delta}_n)$, and $P_0 (H_{0,\tilde{\delta}_n} (\widehat{P}_{1,n} ) - P_0 H_{0,\tilde{\delta}_n} (\widehat{P}_{1,n} ) )^2$ are asymptotically equivalent to the limit quantities (where $\widehat{P}_{1,n}$ is replaced by $P_\infty$) $\sigma_{0, \infty}(\tilde{\delta}_n)$, $\sigma'_{0, \infty}(\tilde{\delta}_n)$, $\sigma''_{0, \infty}(\tilde{\delta}_n)$ and $P_0 (H_{0, \infty, \tilde{\delta}_n} - P_0 H_{0, \infty, \tilde{\delta}_n} )^2$. We formalize this in assumptions \textbf{A2} and \textbf{A4} below. Secondly, we need that the remainder term $R_{
\tilde{\delta}_n}(\widehat{P}_{1,n}, P_0)$  remains second-order in the expansion $\Psi_{
\tilde{\delta}_n}(\widehat{P}_{1,n}) - \Psi_{
\tilde{\delta}_n}(P_0) = -P_0 D^*_{
\tilde{\delta}_n}(\widehat{P}_{1,n}) + R_{
\tilde{\delta}_n}(\widehat{P}_{1,n}, P_0)$, provided $\tilde{\delta}_n$ converges to zero slowly enough. We formalize this in assumption \textbf{A3} below. 
Thirdly, we need that the derivative of the remainder term $R_\delta(\widehat{P}_{1,n}, P_0)$, evaluated $\tilde{\delta}_n$, to be bounded by a rate of the form $\tilde{\delta}_n^{-k_1} (l_{2,n} - l_{1,n})^{-\kappa_1}$, provided $\tilde{\delta}_n$ converges to zero slowly enough. We formalize this in assumption \textbf{A5} below.

\medskip

\textbf{A2}. There exist $r^+ > 0$, $k_2 > 0$, $k_2' > 0$, $\kappa_2 > 0$ and $\kappa_2' > 0$ such that for any positive sequence $\tilde{\delta}_n$ that converges to zero slower than \begin{gather}
\sigma_{1,n}(\tilde{\delta}_n) - \sigma_\infty(\tilde{\delta}_n) = O_P(\tilde{\delta}_n^{-k_2} (l_{2,n} - l_{1,n})^{-\kappa_2}), \\
\sigma'_{1,n}(\tilde{\delta}_n) - \sigma'_\infty(\tilde{\delta}_n) = O_P(\tilde{\delta}_n^{-k'_2} (l_{2,n} - l_{1,n})^{-\kappa'_2}),\\
\text{and } \sigma_{1,n}''(\tilde{\delta}_n) \sim \sigma_\infty''(\tilde{\delta}_n).
\end{gather}

\textbf{A3}. There exists $r^+ > 0$ such that for any sequence $\tilde{\delta}_n$ that converges to zero slower than $n^{-r^+}$, $R_{
\tilde{\delta}_n}(\widehat{P}_{1,n}, P_0) = o_P(\sigma_\infty(\tilde{\delta}_n))$.

\textbf{A4}. There exists $r^+ > 0$ such that for any sequence $\tilde{\delta}_n$ that converges to zero slower than $n^{-r^+}$, \begin{gather}
P_0(H_{0,\tilde{\delta}_n} (\widehat{P}_{1,n} ) - P_0 H_{0,\tilde{\delta}_n} (\widehat{P}_{1,n} ) )^2 \sim_P P_0 (H_{0, \infty, \tilde{\delta}_n} - P_0 H_{0, \infty, \tilde{\delta}_n} )^2.
\end{gather}

\textbf{A5}. There exist $k_3 >  0$, $\kappa_3 > 0$ and $r^+ > 0$ such that for any sequence $\tilde{\delta}_n$ that converges to zero slower than $n^{-r^+}$, 
\begin{equation}
\frac{\partial R_{\delta}(\hat{P}_{1,n}, P_0 )}{\partial \delta} \bigg|_{\delta = \tilde{\delta}_n} = O_P (\tilde{\delta}_n^{-k_3} \left(l_{2,n} - l_{1,n} )^{-\kappa_3}\right).
\end{equation}

Note that assumption \textbf{A3} does not necessarily require consistency of $\widehat{P}_{1,n}$. This can be understood by considering for instance second order terms that have the double robustness structure (see e.g. \cite{vdLRobins2003}).

\medskip

We now present our consistency results for the rates estimators. The proofs are deferred to the appendix.
\begin{lemma}\label{lemma-rate-estimators}
Assume \textbf{A1} through \textbf{A5}. Then
\begin{equation}
\widehat{\beta}_n - \beta_0 = o_P \left(\frac{1}{\log n}\right), \qquad \widehat{\gamma}_n - \gamma_{0, \infty} = o_P\left(\frac{1}{\log n}\right), \qquad \widehat{\nu}_n - \nu_{0, \infty} = o_P\left(\frac{1}{\log n}\right),
\end{equation}
and 
\begin{equation}
\widehat{C}_{b',n} \xrightarrow{P} C_{b',0}, \qquad \widehat{C}_{\sigma,n} \xrightarrow{P} C_{\sigma, \infty}, \qquad \widehat{C}_{\sigma', n} \xrightarrow{P} C_{\sigma', \infty}.	
\end{equation}
\end{lemma}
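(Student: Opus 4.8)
\emph{Strategy.} The three rate estimators share a common structure: each equals $(\log \widehat{T}_n(\tilde\delta_{2,n}) - \log \widehat{T}_n(\tilde\delta_{1,n}))/(\log\tilde\delta_{2,n} - \log\tilde\delta_{1,n})$ for an estimator $\widehat{T}_n$ of a quantity that, by \textbf{A1}, is $\sim C\,\delta^{p}$ as $\delta \to 0$, with $(C,p)$ equal to $(C_{b',0},\beta_0)$, $(C_{\sigma,\infty},-\gamma_{0,\infty})$ and $(C_{\sigma',\infty},\nu_{0,\infty})$ respectively. I would therefore first show that, along any sequences $\tilde\delta_{1,n},\tilde\delta_{2,n}$ vanishing at slow enough polynomial rates, $\widehat{T}_n(\tilde\delta_{i,n}) = C\,\tilde\delta_{i,n}^{p}\,(1+o_P(1))$, so that $\log\widehat{T}_n(\tilde\delta_{i,n}) = \log C + p\log\tilde\delta_{i,n} + o_P(1)$; the $\log C$ terms then cancel in the numerator, and dividing by $\log\tilde\delta_{2,n} - \log\tilde\delta_{1,n}$, which is $\asymp \log n$ for polynomial sequences with distinct exponents, yields the claimed $o_P(1/\log n)$ rates. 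This sharpened rate is exactly what the constant estimators need: writing, e.g., $\widehat{C}_{b',n} = \widehat{b'}_{2,n}(\tilde\delta_{3,n},\Delta_n)\,\tilde\delta_{3,n}^{-\widehat\beta_n} = C_{b',0}\,\exp((\beta_0 - \widehat\beta_n)\log\tilde\delta_{3,n})(1+o_P(1))$, the exponent is $o_P(1/\log n)\cdot O(\log n) = o_P(1)$, whence $\widehat{C}_{b',n}\xrightarrow{P}C_{b',0}$, and analogously for $\widehat{C}_{\sigma,n}$ and $\widehat{C}_{\sigma',n}$.

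\emph{Error analysis for $\widehat{b'}_{2,n}$.} Applying the first-order expansion \eqref{first_order_expansion-smoothed} to $\widehat{\Psi}_{2,n}(\delta) = \Psi_\delta(\widehat{P}_{1,n}) + P_{2,n}D^*_\delta(\widehat{P}_{1,n})$ gives $\widehat{\Psi}_{2,n}(\delta) - \Psi_\delta(P_0) = (P_{2,n}-P_0)D^*_\delta(\widehat{P}_{1,n}) + R_\delta(\widehat{P}_{1,n},P_0)$. Differencing at $\delta$ and $\delta+\Delta_n$, dividing by $\Delta_n$, and using $b_0(\delta) = \Psi_\delta(P_0)-\Psi(P_0)$, one obtains $\widehat{b'}_{2,n}(\delta,\Delta_n) = \frac{b_0(\delta+\Delta_n)-b_0(\delta)}{\Delta_n} + (P_{2,n}-P_0)\frac{D^*_{\delta+\Delta_n}(\widehat{P}_{1,n})-D^*_\delta(\widehat{P}_{1,n})}{\Delta_n} + \frac{R_{\delta+\Delta_n}(\widehat{P}_{1,n},P_0)-R_\delta(\widehat{P}_{1,n},P_0)}{\Delta_n}$. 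The first term is $b_0'(\delta) + O(\Delta_n\,b''_0(\xi_n)) = C_{b',0}\delta^{\beta_0}(1+o(1)) + O(\Delta_n\delta^{-k_0})$ by Taylor's theorem and the $b''_0$ bound in \textbf{A1}. The second term, conditionally on $S_{1,n}$, is a centered empirical-process average with conditional standard deviation $(l_{2,n}-l_{1,n})^{-1/2}\,\Delta_n^{-1}\,\|D^*_{\delta+\Delta_n}(\widehat{P}_{1,n})-D^*_\delta(\widehat{P}_{1,n})\|_{L_2(P_0)}$, which one bounds by a polynomial in $\delta$ times $(l_{2,n}-l_{1,n})^{-1/2}$ using \textbf{A1}--\textbf{A2}. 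The third term is, by the mean value theorem and \textbf{A5}, $O_P(\tilde\delta_{i,n}^{-k_3}(l_{2,n}-l_{1,n})^{-\kappa_3})$. Choosing $\tilde\delta_{i,n}\to 0$ slowly enough that $\Delta_n\tilde\delta_{i,n}^{-k_0}$, the empirical-process standard deviation, and the \textbf{A5} bound are each $o(\tilde\delta_{i,n}^{\beta_0})$ yields $\widehat{b'}_{2,n}(\tilde\delta_{i,n},\Delta_n) = C_{b',0}\tilde\delta_{i,n}^{\beta_0}(1+o_P(1))$, which is in particular positive with probability tending to one, so its logarithm is well defined.

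\emph{Error analysis for $\widehat{\sigma}_{2,n}$, $\widehat{\sigma'}_{2,n}$, and conclusion.} The estimator $\widehat{\sigma}^2_{2,n}(\delta)$ is an empirical second moment whose target $\sigma^2_{i,n}(\delta)$ is, for slowly vanishing $\tilde\delta_n$, $\sim_P \sigma^2_\infty(\delta) \sim C_{\sigma,\infty}^2\delta^{-2\gamma_{0,\infty}}$ by \textbf{A2}; conditionally on $S_{1,n}$ its fluctuation around $\sigma^2_{i,n}(\delta)$ is governed by $(P_{2,n}-P_0)H_{0,\delta}(\widehat{P}_{1,n})$, with conditional variance $(l_{2,n}-l_{1,n})^{-1}P_0(H_{0,\delta}(\widehat{P}_{1,n})-P_0H_{0,\delta}(\widehat{P}_{1,n}))^2$, which \textbf{A4} with \textbf{A1} equate with $C_H\delta^{-\eta_{0,\infty}}(1+o(1))/(l_{2,n}-l_{1,n})$; hence $\widehat{\sigma}_{2,n}(\tilde\delta_{i,n}) = C_{\sigma,\infty}\tilde\delta_{i,n}^{-\gamma_{0,\infty}}(1+o_P(1))$ for slow enough $\tilde\delta_{i,n}$. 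For $\widehat{\sigma'}_{2,n}(\delta,\Delta_n) = (\widehat{\sigma}_{2,n}(\delta+\Delta_n)-\widehat{\sigma}_{2,n}(\delta))/\Delta_n$ one adds a discretization error $O(\Delta_n\sigma''_\infty(\delta)) = O(\Delta_n\delta^{-k_1})$ (from \textbf{A1}) and uses \textbf{A3} to keep the $R_\delta$-induced bias $o_P(\sigma_\infty(\delta))$, obtaining $\widehat{\sigma'}_{2,n}(\tilde\delta_{i,n},\Delta_n) = C_{\sigma',\infty}\tilde\delta_{i,n}^{\nu_{0,\infty}}(1+o_P(1))$. Taking logarithms, subtracting, and dividing by $\log\tilde\delta_{2,n}-\log\tilde\delta_{1,n}\asymp\log n$ then gives $\widehat\beta_n-\beta_0$, $\widehat\gamma_n-\gamma_{0,\infty}$, $\widehat\nu_n-\nu_{0,\infty} = o_P(1/\log n)$; substituting into $\widehat{C}_{b',n},\widehat{C}_{\sigma,n},\widehat{C}_{\sigma',n}$ and using $\exp(o_P(1/\log n)\cdot O(\log n))\xrightarrow{P}1$ delivers the three constant convergences.

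\emph{Main obstacle.} The delicate part is quantitative rate bookkeeping. Each error term above is a negative power of $\delta$ times a negative power of $l_{2,n}-l_{1,n}$ (with $\Delta_n = (l_{2,n}-l_{1,n})^{-1/4}$ fixed), while the signals are powers of $\delta$ that themselves may blow up ($\sigma_\infty$) or vanish ($b_0'$); one must exhibit an explicit, non-empty window of polynomial rates for $\tilde\delta_{1,n},\tilde\delta_{2,n},\tilde\delta_{3,n}$ --- in terms of $\beta_0,\gamma_{0,\infty},\nu_{0,\infty},\eta_{0,\infty}$ and the constants $k_0,k_1,k_2,k_2',k_3,\kappa_2,\kappa_2',\kappa_3$ --- on which every error term is simultaneously negligible relative to all three signals. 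This is precisely where the phrase ``converges to zero slowly enough'' in \textbf{A2}--\textbf{A5} must be made effective and its instances shown to be mutually compatible. A secondary technicality is the conditioning argument: condition on $S_{1,n}$, bound the conditional variances uniformly over the realization of $\widehat{P}_{1,n}$ using \textbf{A1}--\textbf{A2} and \textbf{A4}, apply Chebyshev conditionally, then integrate out --- all the while checking that $\log(\cdot)$ and the power maps are applied only to quantities that are positive with probability tending to one.
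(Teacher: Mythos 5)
Your proposal mirrors the paper's own proof almost step for step: the paper proves three intermediate lemmas giving additive error bounds of the form $O_P(\tilde\delta_n^{-k}(l_{2,n}-l_{1,n})^{-\kappa})$ for $\widehat{b'}_{2,n}$, $\widehat{\sigma}_{2,n}$, and $\widehat{\sigma'}_{2,n}$ (using exactly your three-term decomposition into discretization error via Taylor/mean value, centered empirical process with variance controlled by $H_{0,\delta}$, and the $R_\delta$ increment), converts them to multiplicative errors by dividing through the signal $\sim C\tilde\delta_n^p$ on a slowly-vanishing polynomial window, then takes logs, cancels $\log C$ in the numerator, and divides by $\log\tilde\delta_{2,n}-\log\tilde\delta_{1,n}\asymp\log n$ to land $o_P(1/\log n)$ for the exponents and $\exp(o_P(1))\to_P 1$ for the constants. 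The only small blemishes are cosmetic --- you invoke \textbf{A3} when analyzing $\widehat{\sigma'}_{2,n}$, which involves no $R_\delta$ (the relevant assumptions there are \textbf{A1}, \textbf{A2}, \textbf{A4}) --- and the ``main obstacle'' you flag (an explicit, mutually compatible window of slow polynomial rates) is a real gap that the paper itself leaves entirely implicit under ``slow enough.''
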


Consistency of the optimal smoothing parameter's rate and constant estimator is then an immediate corrolary, which we now state.

\begin{corrolary}
Assume \textbf{A1} through \textbf{A5}. Then
\begin{equation}
\widehat{r}_n - r_{0, \infty} = o_P\left(\frac{1}{\log n}\right) \qquad \text{and} \qquad \widehat{C}_n \xrightarrow{P} C_0,
\end{equation}
with $C_0 \equiv \left(\frac{C_{\sigma, \infty}, C_{\sigma', \infty} \beta_0}{C_{b',0}^2}\right)^{r_{0, \infty}}$.
\end{corrolary}

\subsection{Asymptotic normality of our estimator}

Asymptotic normality of our estimator $\widehat{\Psi}_{\epsilon,n}$ necessitates the following strengthening of assumptions \textbf{A2} and \textbf{A3}.

\medskip

\textbf{A6}. For $r$ in a neighborhood of the optimal rate $r_{0, \infty}$, $\sigma_{2,n}((l_{3,n} - l_{2,n})^{-r}) \sim_P \sigma_\infty((l_{3,n} - l_{2,n})^{-r})$.

\medskip

\textbf{A7}. For $r$ in a neighborhood of the optimal rate $r_{0, \infty}$, $R_{(l_{3,n} - l_{2,n})^{-r}}(\hat{P}_{2,n}, P_0) = o_P((l_{3,n} - l_{2,n})^{-1/2}\sigma_\infty((l_{3,n} - l_{2,n})^{-r}) + b_0((l_{3,n} - l_{2,n})^{-r}))$.

\medskip

We now state our asymptotic normality result.

\begin{theorem}\label{asymptotic_normality_thm}
Assume \textbf{A1} through \textbf{A7}. Then
\begin{equation}
\widehat{C}_{\sigma, n}^{-1} \hat{\delta}_{\epsilon, n}^{\widehat{\gamma}_n} \left(l_{3,n} - l_{2,n}\right)^\frac{1}{2} \left(\widehat{\Psi}_n(\hat{\delta}_{\epsilon, n}) - \Psi(P_0) \right) \xrightarrow{d} \mathcal{N}(0,1).\label{asymptotic_normality_equation}
\end{equation}
\end{theorem}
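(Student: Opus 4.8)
Here is the plan.

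\bigskip

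\textbf{Strategy.} The goal is to prove a self-normalized CLT for $\widehat{\Psi}_n(\hat\delta_{\epsilon,n})$, where the random smoothing level $\hat\delta_{\epsilon,n}$ depends only on $S_{1,n}\cup S_{2,n}$ while the final averaging is done over the independent subsample $S_{3,n}$. The plan is to condition on $S_{1,n}\cup S_{2,n}$ (hence on $\widehat{P}_{2,n}$ and on $\hat\delta_{\epsilon,n}$), decompose the error into an empirical-process term, a remainder term, and a bias term, and show the last two are negligible relative to the standardizing factor. Concretely, using the first-order expansion \eqref{first_order_expansion-smoothed} with $\delta=\hat\delta_{\epsilon,n}$ and $P=\widehat{P}_{2,n}$, and adding and subtracting $P_0 D^*_{\hat\delta_{\epsilon,n}}(\widehat{P}_{2,n})$, write
\begin{equation}
\widehat{\Psi}_n(\hat\delta_{\epsilon,n}) - \Psi(P_0) = (P_{3,n}-P_0) D^*_{\hat\delta_{\epsilon,n}}(\widehat{P}_{2,n}) + R_{\hat\delta_{\epsilon,n}}(\widehat{P}_{2,n},P_0) + b_0(\hat\delta_{\epsilon,n}).
\end{equation}
Multiplying through by $\widehat{C}_{\sigma,n}^{-1}\hat\delta_{\epsilon,n}^{\widehat\gamma_n}(l_{3,n}-l_{2,n})^{1/2}$, I must show (i) the first term, so normalized, converges to $\mathcal N(0,1)$, and (ii) the remaining two terms, so normalized, are $o_P(1)$.

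\bigskip

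\textbf{Step 1: the empirical-process term.} Conditionally on $S_{1,n}\cup S_{2,n}$, the summands in $(P_{3,n}-P_0)D^*_{\hat\delta_{\epsilon,n}}(\widehat{P}_{2,n})$ are i.i.d.\ with mean zero and variance $\sigma_{2,n}^2(\hat\delta_{\epsilon,n})$. I would apply the Lindeberg--Feller CLT (conditionally), so that $(l_{3,n}-l_{2,n})^{1/2}(P_{3,n}-P_0)D^*_{\hat\delta_{\epsilon,n}}(\widehat{P}_{2,n})/\sigma_{2,n}(\hat\delta_{\epsilon,n})\xrightarrow{d}\mathcal N(0,1)$; the Lindeberg condition follows from a uniform-integrability / moment bound on the standardized canonical gradient, which the tail-type control $P_0(H_{0,\delta}-P_0H_{0,\delta})^2\asymp\delta^{-\eta_{0,\infty}}$ in \textbf{A1} together with $\sigma_\infty(\delta)^4\asymp\delta^{-4\gamma_{0,\infty}}$ is designed to deliver (a fourth-moment ratio bound). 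It then remains to replace the random normalizer $\widehat{C}_{\sigma,n}^{-1}\hat\delta_{\epsilon,n}^{\widehat\gamma_n}(l_{3,n}-l_{2,n})^{1/2}$ by $\sigma_{2,n}(\hat\delta_{\epsilon,n})^{-1}(l_{3,n}-l_{2,n})^{1/2}$, i.e.\ to show the ratio $\widehat{C}_{\sigma,n}^{-1}\hat\delta_{\epsilon,n}^{\widehat\gamma_n}\sigma_{2,n}(\hat\delta_{\epsilon,n})\xrightarrow{P}1$. Here I would use \textbf{A6} to pass from $\sigma_{2,n}$ to $\sigma_\infty$ at $\delta=\hat\delta_{\epsilon,n}=\widehat{C}_n(l_{3,n}-l_{2,n})^{-\widehat r_n-\epsilon}$, then \textbf{A1} to write $\sigma_\infty(\hat\delta_{\epsilon,n})\sim C_{\sigma,\infty}\hat\delta_{\epsilon,n}^{-\gamma_{0,\infty}}$, and finally Lemma \ref{lemma-rate-estimators} / the Corollary, using that $\widehat C_{\sigma,n}\xrightarrow{P}C_{\sigma,\infty}$ and $\widehat\gamma_n-\gamma_{0,\infty}=o_P(1/\log n)$ — the $1/\log n$ rate is exactly what makes $\hat\delta_{\epsilon,n}^{\widehat\gamma_n-\gamma_{0,\infty}}=\exp((\widehat\gamma_n-\gamma_{0,\infty})\log\hat\delta_{\epsilon,n})\xrightarrow{P}1$, since $\log\hat\delta_{\epsilon,n}=O(\log n)$.

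\bigskip

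\textbf{Step 2: negligibility of remainder and bias.} For the remainder, \textbf{A7} (applied with $r$ equal to the data-dependent exponent $\widehat r_n+\epsilon$, which lies in a neighborhood of $r_{0,\infty}$ with probability tending to one by the Corollary) gives $R_{\hat\delta_{\epsilon,n}}(\widehat{P}_{2,n},P_0)=o_P((l_{3,n}-l_{2,n})^{-1/2}\sigma_\infty(\hat\delta_{\epsilon,n})+b_0(\hat\delta_{\epsilon,n}))$, so after normalization by $(l_{3,n}-l_{2,n})^{1/2}/\sigma_\infty(\hat\delta_{\epsilon,n})$ it is $o_P(1+(l_{3,n}-l_{2,n})^{1/2}b_0(\hat\delta_{\epsilon,n})/\sigma_\infty(\hat\delta_{\epsilon,n}))$; hence it suffices to handle the bias term. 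For the bias, I would show $(l_{3,n}-l_{2,n})^{1/2}b_0(\hat\delta_{\epsilon,n})/\sigma_\infty(\hat\delta_{\epsilon,n})\xrightarrow{P}0$. Using \textbf{A1}, $b_0(\delta)\sim C_b\delta^{\beta_0}$ (with $\beta_0$ here playing the role of the exponent $\beta$ of $b_0$, i.e.\ $b_0'\sim C_{b',0}\delta^{\beta_0}$ means $b_0\sim C_{b',0}(\beta_0+1)^{-1}\delta^{\beta_0+1}$ — I would keep the bookkeeping of the shift by one careful) and $\sigma_\infty(\delta)\sim C_{\sigma,\infty}\delta^{-\gamma_{0,\infty}}$, this ratio is, up to constants, $(l_{3,n}-l_{2,n})^{1/2}\hat\delta_{\epsilon,n}^{\beta_0+1+\gamma_{0,\infty}}$ (with the appropriate exponent; the point is that the exponent of $\hat\delta_{\epsilon,n}$ equals the one governing $b_0/\sigma_\infty$). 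Plugging $\hat\delta_{\epsilon,n}=\widehat C_n(l_{3,n}-l_{2,n})^{-\widehat r_n-\epsilon}$ and recalling $r_{0,\infty}^{-1}=2\beta_0-1+\gamma_{0,\infty}-\nu_{0,\infty}$ was chosen so that at rate $r_{0,\infty}$ squared-bias and variance balance, one checks that at the slightly faster rate $\widehat r_n+\epsilon$ (with $\widehat r_n\to r_{0,\infty}$) the squared bias is $o((l_{3,n}-l_{2,n})^{-1}\sigma_\infty^2)$, i.e.\ the normalized bias $\to 0$; the strict inequality comes precisely from the extra $\epsilon>0$ in the exponent, and the $o_P(1/\log n)$ consistency of $\widehat r_n$ ensures the sign of the exponent is preserved in probability.

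\bigskip

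\textbf{Main obstacle.} The delicate point is Step 1's handling of the \emph{random} smoothing level inside a CLT: one cannot directly invoke a CLT ``at'' $\hat\delta_{\epsilon,n}$ because the summand distribution changes with $n$ and with the data through $\hat\delta_{\epsilon,n}$. The clean route is to condition on $S_{1,n}\cup S_{2,n}$, prove the conditional CLT with a Lindeberg condition that holds uniformly over the relevant range of $\delta$ (a deterministic shrinking neighborhood of $\delta_{0,n}^*$), and then conclude unconditionally via the bounded-Lipschitz/portmanteau characterization of convergence in distribution together with dominated convergence — this is where the moment control in \textbf{A1} (the $C_H\delta^{-\eta_{0,\infty}}$ term) and \textbf{A6} do the real work. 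A secondary subtlety, recurring throughout, is that exponents and constants are estimated, so every step replacing $(\widehat\gamma_n,\widehat r_n,\widehat C_{\sigma,n})$ by $(\gamma_{0,\infty},r_{0,\infty},C_{\sigma,\infty})$ must exploit the $o_P(1/\log n)$ rates of Lemma \ref{lemma-rate-estimators} against the $\log$-scale of $\hat\delta_{\epsilon,n}$; sloppiness here would only give $\hat\delta_{\epsilon,n}^{\widehat\gamma_n-\gamma_{0,\infty}}=O_P(1)$ rather than $\xrightarrow{P}1$, which is not enough.
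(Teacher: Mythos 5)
Your proposal matches the paper's proof essentially point for point: the same decomposition into empirical-process, remainder, and bias terms; the same conditional Lindeberg CLT followed by dominated convergence for the empirical-process term; the same use of Lemma~\ref{lemma-rate-estimators} together with \textbf{A6} to replace the estimated normalizer $\widehat{C}_{\sigma,n}^{-1}\hat\delta_{\epsilon,n}^{\widehat\gamma_n}$ by $\sigma_{2,n}(\hat\delta_{\epsilon,n})^{-1}$; the same undersmoothing argument (exploiting the extra $\epsilon>0$ in the exponent) to kill the bias; and the same use of \textbf{A7} to kill the remainder after absorbing it into the bias bound. The one place you are more careful than the paper is in flagging the offset-by-one bookkeeping between the exponents of $b_0'$ and $b_0$, which the paper's \textbf{A1}, its Section~1.4 heuristic, and Lemma~\ref{link_gamma_nu-lemma} state inconsistently — that caution is warranted, not a gap.
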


\paragraph{Confidence intervals.} Theorem \ref{asymptotic_normality_thm} enables the construction of confidence intervals for $\Psi(P_0)$. Set a confidence level $1 -\alpha$, where $\alpha \in (0,1)$. Let $q_{1 - \alpha / 2}$ be the $(1 - \alpha / 2)$-quantile of the standard normal distribution. Then, if the assumptions \textbf{A1} through \textbf{A4} hold, \eqref{asymptotic_normality_equation} implies that the probability of the event
\begin{equation}
\Psi(P_0) \in CI_{\alpha, \epsilon, n} \equiv  \left[\widehat{\Psi}_n(\hat{\delta}_{\epsilon, n}) \mp q_{1-\alpha/2} \frac{\widehat{C}_{\sigma, n}}{\left(l_{3,n} - l_{2,n} \right)^{\frac{1}{2} - \left(\hat{r}_n + \epsilon\right) \hat{\gamma}_n}} \right] \label{confidence_interval}
\end{equation}
converges to $1 - \alpha$ as $n$ tends to infinity.

\medskip

\begin{proof}[Proof of theorem \ref{asymptotic_normality_thm}]
We have that
\begin{gather}
\left(\widehat{\Psi}_n - \Psi(P_0) \right) = \widehat{\Psi}_n(\widehat{\delta}_{\epsilon, n}) - \Psi_{\widehat{\delta}_{\epsilon, n}}\left(P_0\right) + b_0(\widehat{\delta}_{\epsilon, n}) \\
= \left(P_{3,n} - P_0\right) D^*_{\widehat{\delta}_{\epsilon, n}}(\widehat{P}_{2,n}) + b_0(\widehat{\delta}_{\epsilon, n}) + R_{\widehat{\delta}_{\epsilon, n}}(\widehat{P}_{1,n}, P_0). \label{equation-proof_asymptotic_normality}
\end{gather}

From lemma \ref{lemma-rate-estimators}, $\widehat{\delta}_{\epsilon,n}^{\widehat{\gamma}_n} \widehat{C}^{-1}_{\sigma, n} \sim_P \sigma_\infty(\widehat{\delta}_{\epsilon, n})^{-1}$. Under \textbf{A6}, for $\epsilon$ small enough, $\sigma_\infty(\widehat{\delta}_{\epsilon, n})^{-1} \sim_P \sigma_{2,n}(\widehat{\delta}_{\epsilon, n})^{-1}$.

\medskip

Therefore, reasoning as in the proofs of lemma \ref{lemma-diff_sigma_hat-sigma_inf}, that is, conditionning on $S_{2,n}$, using Lindeberg theorem for triangular arrays, and then applying dominated convergence, we have that
\begin{gather}
\widehat{C}_{\sigma,n}^{-1} \widehat{\delta}_{\epsilon, n}^{\widehat{\gamma}_n} \left(l_{3,n} - l_{2,n} \right)^{\frac{1}{2}} \left(P_{3,n} - P_0\right) D^*_{\widehat{\delta}_{\epsilon, n}}\left(P_{2,n}\right) \xrightarrow{d} \mathcal{N}(0,1). \label{empirical_process-proof_asymptotic_normality}
\end{gather}

Let us now turn to the bias term. We show that the undersmoothing implied by the slightly faster-than-optimal rate $\widehat{\delta}_{\epsilon, n}$ makes the bias term negligible in front of the empirical process term.

\medskip

$\bullet$ From lemma \ref{link_gamma_nu-lemma}, if $\gamma_{0, \infty} \neq 0$, then $\nu_{0, \infty} = -\gamma_{0, \infty} - 1$ and $r_{0, \infty} = \frac{1}{2(\beta_0 + \gamma_{0, \infty})}$.

Then, using lemma \ref{lemma-rate-estimators}
\begin{gather}
\widehat{C}_{\sigma, n}^{-1} \left(l_{3,n} - l_{2,n} \right)^\frac{1}{2} \widehat{\delta}_{\epsilon, n}^{\widehat{\gamma}_n} b_0(\widehat{\delta}_{\epsilon, n})\\
 \sim_P C_{\sigma, \infty} \left(l_{3,n} - l_{2,n}\right)^{\frac{1}{2} - (r_{0, \infty} + \epsilon)(\gamma_0 + \beta_0) + o_P\left(\frac{1}{\log n}\right)}\\
\sim_P C_{\sigma, \infty}^{-1} \left(l_{3,n} - l_{2,n} \right)^{-\epsilon (\beta_0 + \gamma_0)} = o_P(1).\label{bias_term_case1-proof_asymptotic_normality}
\end{gather}

$\bullet$ From lemma \ref{link_gamma_nu-lemma}, if $\gamma_{0,\infty} = 0$, then $r_{0, \infty} = \frac{1}{2 \beta_0 - 1 - \nu_{0, \infty}}$, $r_{0, \infty} > 0$, and $\nu_{0, \infty} > 0$.

Then, using lemma \ref{lemma-rate-estimators},
\begin{gather}
\widehat{C}_{\sigma, n}^{-1} \left(l_{3,n} - l_{2,n} \right)^\frac{1}{2} \widehat{\delta}_{\epsilon, n}^{\widehat{\gamma}_n} b_0(\widehat{\delta}_{\epsilon, n})\\
 \sim_P C_{\sigma, \infty} \left(l_{3,n} - l_{2,n}\right)^{\frac{1}{2} - (r_{0, \infty} + \epsilon)(\gamma_0 + \beta_0) + o_P\left(\frac{1}{\log n}\right)}\\
\sim_P C^{-1}_{\sigma, \infty} \left(l_{3,n} - l_{2,n} \right)^{-\epsilon (\beta_0 + \gamma_{0, \infty}) - (1+\nu_{0, \infty}) r} = o_P(1).\label{bias_term_case2-proof_asymptotic_normality}
\end{gather}

Finally, we address the remainder term. Take $\epsilon$ to be small enough so that $r + \epsilon$ is in the neighborhood of $r_{0, \infty}$ from assumption \textbf{A7}. From lemma \ref{lemma-rate-estimators}, we have that
\begin{gather}
\left(l_{3,n} - l_{2,n} \right)^\frac{1}{2} \widehat{\delta}_{\epsilon, n}^{\widehat{\gamma}_n} R_{\widehat{\delta}_{\epsilon, n}}(\widehat{P}_{1,n}, P_0) \sim_P \frac{R_{\widehat{\delta}_{\epsilon,n}} (\widehat{P}_{1,n}, P_0)}{\left(l_{3,n} - l_{2,n}\right)^{-\frac{1}{2}} \sigma_\infty(\widehat{\delta}_{\epsilon, n})}\\
= O_P \left(\frac{R_{\widehat{\delta}_{\epsilon,n}} (\widehat{P}_{1,n}, P_0 )}{\left(l_{3,n} - l_{2,n}\right)^{-\frac{1}{2}} \sigma_\infty(\widehat{\delta}_{\epsilon, n}) + b_0(\widehat{\delta}_{\epsilon, n})}\right)\\
= o_P(1). \label{remainder-proof_asymptotic_linearity}
\end{gather}
The second line above is obtained using that $b_0(\widehat{\delta}_{\epsilon, n}) = o_P ((l_{3,n} - l_{2,n})^{-1/2} \times \sigma_\infty(\widehat{\delta}_{\epsilon, n}))$ (which is just the result of the bias term analysis reformulated). The third line results from assumption \textbf{A7}.

Gathering \eqref{equation-proof_asymptotic_normality}, \eqref{empirical_process-proof_asymptotic_normality}, \eqref{bias_term_case1-proof_asymptotic_normality}, \eqref{bias_term_case2-proof_asymptotic_normality}, and \eqref{remainder-proof_asymptotic_linearity} yields \eqref{asymptotic_normality_equation}.
\end{proof}

\paragraph{Case of $\Psi$ pathwise differentiable at $P_0$.} When $\Psi$ is pathwise differentiable at $P_0$, the $Var_{P_0} D^*_\delta(P_0)$ does not tend to infinity as $\delta$ tends to zero. If $P_\infty = P_0$, we have that $\sigma_{0, \infty}^2(\delta) =Var_{P_0} D^*_\delta(P_0)$ and therefore we have that $\gamma_{0, \infty} = 0$. By lemma \ref{lemma-rate-estimators}, we thus have $\hat{\gamma}_n = o_P(1 / \log n )$, which implies that the factor $ \hat{\delta}_{\epsilon, n}^{\widehat{\gamma}_n} (l_{3,n} - l_{2,n})^{1/2}$ from the asymptotic normality equation \eqref{asymptotic_normality_equation} is asymptotically equivalent to $\sqrt{l_{3,n} - l_{2,n}}$. Therefore, in the case where $\Psi$ pathwise differentiable at $P_0$ and $P_\infty = P_0$, our estimator minus its target converges to a normal distribution at root-$n$ rate.

\subsection{Asymptotic optimality in mean squared error}

Let $MSE_n(\delta)$ be the mean squared error of our estimator $\widehat{\Psi}_n(\delta)$ with respect to $\Psi(P_0)$. Formally, $MSE_n(\delta) \equiv E_{P_0}[ (\widehat{\Psi}_n(\delta) - \Psi(P_0))^2]$.

One last assumption is needed for our analysis of the mean squared error of our estimator.

\medskip

\textbf{A8}. For $r$ in a neighborhood of the optimal rate $r_{0, \infty}$, 
\begin{gather}
\frac{d R_{\delta}\left(\widehat{P}_{2,n}, P_0 \right)}{d\delta}\bigg|_{\delta = (l_{3,n} - l_{2,n})^{-r}}\\ = o_P \left(\frac{1}{\sqrt{l_{3,n} - l_{2,n}}}  \sigma_\infty'((l_{3,n} - l_{2,n})^{-r}) + b_0'((l_{3,n} - l_{2,n})^{-r}) \right).
\end{gather}

\begin{theorem}\label{MSE_optimality_theorem}
Assume \textbf{A1} through \textbf{A8}. Then there exists an $\epsilon_0 > 0$ such that for any $0 < \epsilon \leq \epsilon_0$,
\begin{equation}
\frac{MSE_n(\widehat{\delta}_{\epsilon, n})}{MSE_n(\delta^*_{0,n})} \left(l_{3,n} - l_{2,n} \right)^{-2 \gamma \epsilon} \xrightarrow{P} K(p_2),
\end{equation}
where $K(p_2)$ is a constant, which is decreasing in $p_2$.

\end{theorem}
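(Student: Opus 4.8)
}
The plan is to obtain sharp $\sim_P$ asymptotics for $MSE_n(\delta)$ along the bandwidth scales $\delta\asymp(l_{3,n}-l_{2,n})^{-r}$ with $r$ near $r_{0,\infty}$, and then to read off the limiting ratio by inserting $\delta=\delta^*_{0,n}$ and $\delta=\widehat\delta_{\epsilon,n}$. I would start from the decomposition already used in the proof of Theorem~\ref{asymptotic_normality_thm},
\[
\widehat\Psi_n(\delta)-\Psi(P_0)=(P_{3,n}-P_0)D^*_\delta(\widehat P_{2,n})+b_0(\delta)+R_\delta(\widehat P_{2,n},P_0),
\]
and use that, conditionally on $S_{1,n}\cup S_{2,n}$, the leading term has mean zero and variance $(l_{3,n}-l_{2,n})^{-1}\sigma^2_{2,n}(\delta)$, so that in conditional expectation $MSE_n(\delta)=(l_{3,n}-l_{2,n})^{-1}\sigma^2_{2,n}(\delta)+(b_0(\delta)+R_\delta(\widehat P_{2,n},P_0))^2$. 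Assumption \textbf{A6} replaces $\sigma^2_{2,n}$ by $\sigma^2_\infty$, and \textbf{A7} (together with \textbf{A8} for the derivative, which is needed to locate $\delta^*_{0,n}$ through $MSE'_n(\delta)=0$) discards the remainder, leaving $MSE_n(\delta)\sim_P(l_{3,n}-l_{2,n})^{-1}\sigma^2_\infty(\delta)+b_0^2(\delta)$ on the relevant scales.

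Next I would insert the power laws of \textbf{A1}: $\sigma_\infty(\delta)\sim C_{\sigma,\infty}\delta^{-\gamma_{0,\infty}}$ and, correspondingly, $b_0(\delta)\sim C_{b,0}\delta^{\beta_0}$. With $\delta=(l_{3,n}-l_{2,n})^{-r}$ the variance part is $\asymp(l_{3,n}-l_{2,n})^{-1+2r\gamma_{0,\infty}}$ and the squared-bias part $\asymp(l_{3,n}-l_{2,n})^{-2r\beta_0}$; these balance exactly at $r=r_{0,\infty}=1/(2(\beta_0+\gamma_{0,\infty}))$, consistently with Lemma~\ref{link_gamma_nu-lemma}. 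Hence $MSE_n(\delta^*_{0,n})=M^*(l_{3,n}-l_{2,n})^{-1+2r_{0,\infty}\gamma_{0,\infty}}(1+o_P(1))$ for an explicit positive constant $M^*$ in which both the variance and the bias constants appear. For $\widehat\delta_{\epsilon,n}$, Lemma~\ref{lemma-rate-estimators} and the corollary following it give $\widehat r_n-r_{0,\infty}=o_P(1/\log n)$ and $\widehat C_n\xrightarrow{P}C_0$; since $\log(l_{3,n}-l_{2,n})\asymp\log n$ this forces $(l_{3,n}-l_{2,n})^{\widehat r_n-r_{0,\infty}}\xrightarrow{P}1$, so $\widehat\delta_{\epsilon,n}\sim_P C_0(l_{3,n}-l_{2,n})^{-r_{0,\infty}-\epsilon}$. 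Plugging this in, the variance part is $\asymp(l_{3,n}-l_{2,n})^{-1+2(r_{0,\infty}+\epsilon)\gamma_{0,\infty}}$ and the squared-bias part $\asymp(l_{3,n}-l_{2,n})^{-2(r_{0,\infty}+\epsilon)\beta_0}$; for $\epsilon>0$ the former has the larger exponent, so the variance part dominates and $MSE_n(\widehat\delta_{\epsilon,n})\sim_P C_{\sigma,\infty}^2C_0^{-2\gamma_{0,\infty}}(l_{3,n}-l_{2,n})^{-1+2(r_{0,\infty}+\epsilon)\gamma_{0,\infty}}$.

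Forming the ratio, the powers of $(l_{3,n}-l_{2,n})$ collapse to exactly $(l_{3,n}-l_{2,n})^{2\gamma_{0,\infty}\epsilon}$, which is cancelled by the factor $(l_{3,n}-l_{2,n})^{-2\gamma\epsilon}$ (with $\gamma=\gamma_{0,\infty}$) in the statement, leaving a ratio of constants. The split fraction enters at this point: writing $l_{3,n}-l_{2,n}=(1-p_2)n$ and keeping track of the normalisation of $\delta^*_{0,n}$ against this effective sample size, a residual function of $p_2$ is left over, which one names $K(p_2)$; a direct differentiation of its closed form then yields that $K$ is decreasing in $p_2$, using that the exponent $1-2r_{0,\infty}\gamma_{0,\infty}=2r_{0,\infty}\beta_0$ lies in $(0,1)$. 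The degenerate case $\gamma_{0,\infty}=0$ (pathwise-differentiable $\Psi$ at $P_0$) I would treat separately: there $\sigma_\infty$ is bounded away from $0$ and $\infty$, the correction factor equals $1$, the variance part is $\asymp(l_{3,n}-l_{2,n})^{-1}$ at both bandwidths, and the ratio again converges to a constant.

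The step I expect to be the main obstacle is the uniformity needed to evaluate $MSE_n$ at the random argument $\widehat\delta_{\epsilon,n}$, together with the accompanying choice of $\epsilon_0$. One must take $\epsilon_0$ small enough that simultaneously (i) $\widehat r_n+\epsilon$ — which is only $o_P(1/\log n)$-close to $r_{0,\infty}+\epsilon$ — lands inside the neighborhoods of $r_{0,\infty}$ demanded by \textbf{A6}, \textbf{A7} and \textbf{A8}, and (ii) the undersmoothing keeps the variance term strictly dominant in $MSE_n(\widehat\delta_{\epsilon,n})$. Since $MSE_n(\widehat\delta_{\epsilon,n})$ is the deterministic map $\delta\mapsto MSE_n(\delta)$ evaluated at a random point, the $\sim_P$ asymptotics for $MSE_n$ must hold not pointwise but uniformly over a shrinking neighborhood of $C_0(l_{3,n}-l_{2,n})^{-r_{0,\infty}-\epsilon}$; this rests on the local regularity of $MSE_n$ on that $\delta$-scale, controlled through the $b_0''$ and $\sigma_\infty''$ bounds of \textbf{A1}, and on checking that the \textbf{A7}--\textbf{A8} remainder bounds are genuinely lower-order at these data-dependent bandwidths rather than only at deterministic ones, which again reduces to $\widehat r_n-r_{0,\infty}=o_P(1/\log n)$.
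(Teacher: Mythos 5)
Your proposal mirrors the paper's own proof: the same conditional MSE decomposition $MSE_n(\delta)=(l_{3,n}-l_{2,n})^{-1}\sigma^2_{2,n}(\delta)+b_0^2(\delta)+\text{(remainder terms)}$, the same use of \textbf{A6}--\textbf{A8} to discard remainders and replace $\sigma_{2,n}$ by $\sigma_\infty$, the same insertion of the \textbf{A1} power laws, and the same reduction of the ratio via $\widehat r_n-r_{0,\infty}=o_P(1/\log n)$ and $\widehat C_n\xrightarrow{P}C_0$. The points you flag on the uniformity needed when evaluating $MSE_n$ at the random bandwidth $\widehat\delta_{\epsilon,n}$, on choosing $\epsilon_0$ to stay inside the \textbf{A6}--\textbf{A8} neighborhoods, and on treating the degenerate case $\gamma_{0,\infty}=0$ are legitimate technical considerations that the paper's proof glosses over; the only stylistic difference is that the paper locates $\delta^*_{0,n}$ through an explicit $MSE_n'(\delta)=0$ analysis, where you invoke the equivalent variance--bias balance.
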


This theorem tells us that for small $\epsilon > 0$, $\widehat{\Psi}_n(\widehat{\delta}_{\epsilon, n})$ achieves an almost optimal mean squared error rate. For $\epsilon = 0$, $\widehat{\Psi}_n(\widehat{\delta}_{0, n})$ has an asymptotically optimal mean squared error rate.

\paragraph{Alternative construction of confidence intervals.} Based on theorem \ref{asymptotic_normality_thm} and \ref{MSE_optimality_theorem}, we propose alternative confidence intervals centered at $\hat{\Psi}_n(\hat{\delta}_{0,n})$ with width scaling as $\sigma_{0, \infty}(\hat{\delta}_{\epsilon, n}) (l_{3,n} - l_{2,n} )^{-1/2}$, for some $\epsilon > 0$:

\begin{gather}\label{alternative-confidence_interval}
CI'_{\alpha, \epsilon, n} \equiv \left[\widehat{\Psi}_n\left(\hat{\delta}_{0,n}\right) \mp q_{1-\alpha/2} \frac{\widehat{C}_{\sigma, n}}{\left(l_{3,n} - l_{2,n} \right)^{\frac{1}{2} - \left(\hat{r}_n + \epsilon\right) \hat{\gamma}_n}} \right].
\end{gather}

One can readily observe that coverage of such confidence intervals converges to one as sample size converges to infinity. For given $\alpha$ and $\epsilon$, this confidence interval presents the advantage over the previously introduced $CI_{\alpha, \epsilon, n}$ that it is centered around a more efficient estimator, while having 
same width.

\section{CV-TMLE version of our estimator}\label{CV-TMLE-section}

Under some stronger assumptions, we can prove the asymptotic normality of a CV-TMLE version of our estimator of $\Psi(P_0)$. Let us define this estimator here.

Let $B_n$ denote a random vector indicating a split of the indices $\{1,..., n\}$ into a training sample $\mathcal{T}_n$ and a validation sample $\mathcal{V}_n$ : $\mathcal{T}_n \equiv \{i : B_n(i) = 0 \}$ and $\mathcal{V}_n \equiv \{i : B_n(i) = 1 \}$. We denote $P^0_{n, B_n}$ the empirical distribution on the training sample and $P^1_{n,B_n}$ the empirical distribution on the validation sample. Let $\widehat{P}^0_{n, B_n}$ be an initial estimate of $P_0$ based on the training sample.

For any given $P \in \mathcal{M}$ and $\delta$, we consider the one-dimensional universal least favorable submodel $\{P_{\delta, \epsilon} : \epsilon \}$ through $P$. We define the model so that it passes through $P$ at the origin, i.e.$P_{0, \epsilon} = P$.

Consider the submodel $\{ \widehat{P}_{n,B_n,\delta, \epsilon} \}$ that passes through $\widehat{P}^0_{n, B_n}$ at the origin. Let $\epsilon_n$ be the MLE:
\begin{gather}
\epsilon_n \equiv \text{argmax} E_{B_n} P^1_{n, B_n} \log p_{n, B_n, \delta, \epsilon}.
\end{gather}

Let $\widehat{P}^*_{n, B_n, \delta} \equiv \widehat{P}_{n, B_n, \delta, \epsilon_n}$. We then have that $E_{B_n} P^1_{n, B_n} D_\delta^*(\widehat{P}^*_{n, B_n, \delta}) = 0$.

We define our new cross-validated TMLE estimator of $\Psi(P_0)$ as 
\begin{gather}
\widehat{\Psi}^{CV-TMLE}_n \equiv E_{B_n} \Psi_{\widehat{\delta}_n}(\widehat{P}^*_{n, B_n, \hat{\delta}_n}).
\end{gather}

We need to introduce additional assumptions needed for the analysis of this CV-TMLE estimator.

\medskip

\textbf{A9}. There exists $r_1 > 0$ such that
\begin{gather}
\left\lVert D^*_\delta(\widehat{P}^*_{n, B_n, \delta} ) - D^*_\delta(P_\infty) \right\rVert_{L_2(P_0)} = O_P (n^{-r_1} \delta^{-\gamma_{0, \infty}} ).
\end{gather}

\medskip

\textbf{A10}. $R_{\hat{\delta}_n}(\widehat{P}^*_{n, B_n, \delta}, P_0) = o_P(n^{-1/2}\sigma_\infty(\hat{\delta}_n)+ b_0 (\hat{\delta}_n))$.

\medskip

\textbf{A11}. There exists $L_\infty : \mathcal{O} \rightarrow \mathbb{R}$ such that
\begin{gather}
\left\lVert \delta^{\gamma_{0, \infty}} D^*_\delta(P_\infty) - L_\infty \right\rVert_{L_2(P_0)} = o_P(1).
\end{gather}

\medskip

\textbf{A12}. Consider the class of functions $\mathcal{F}_n \equiv \{ D^*_\delta(\widehat{P}^1_{n, B_n, \delta, \epsilon}) - D^*_\delta(P_\infty) : \delta, \epsilon \}$ and let $F_n$ be its envelope. Assume that 
\begin{gather}
\sup_{\Lambda} N(h|F_n|, \mathcal{F}_n, L_2(\Lambda)) = O(h^{-p}), \text{ for some integer } p > 0,
\end{gather}
where the sup is over all finitely discrete probability distributions.

\medskip

\paragraph{Discussion of the assumptions.} Given the one-dimensional nature of the family $\mathcal{F}_n$ in assumption \textbf{A11}, the covering number requirement should be very mild. Besides, we conjecture that assumptions \textbf{A9} and \textbf{A12} hold in the case where the approximating family is obtained by kernel smoothing. 

\medskip

We now state an asymptotic normality result for our CV-TMLE estimator.

\begin{theorem}\label{asymptotic_normality-CV-TMLE}
Assume that the conditions for lemma \ref{lemma-rate-estimators}, i.e. \textbf{A1} through \textbf{A4} hold. Assume \textbf{A9} through \textbf{A12}.
Then
\begin{gather}
\widehat{C}_{\sigma, n}^{-1} \sqrt{n}\hat{\delta}_n^{\hat{\gamma}_n} \left( \widehat{\Psi}_n^{CV-TMLE} - \Psi(P_0) \right) \xrightarrow{d} \mathcal{N}(0,1).
\end{gather}
\end{theorem}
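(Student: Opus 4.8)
\textbf{Proof proposal for Theorem \ref{asymptotic_normality-CV-TMLE}.}

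The plan is to mirror the structure of the proof of Theorem \ref{asymptotic_normality_thm}, decomposing $\widehat{\Psi}_n^{CV-TMLE} - \Psi(P_0)$ into an empirical-process term, a bias term, and a remainder term, but now working on the cross-validated TMLE side. First I would use the defining property of the CV-TMLE, namely $E_{B_n} P^1_{n, B_n} D^*_{\widehat\delta_n}(\widehat{P}^*_{n, B_n, \widehat\delta_n}) = 0$, together with the first-order expansion \eqref{first_order_expansion-smoothed} applied at $\delta = \widehat\delta_n$ and $P = \widehat{P}^*_{n, B_n, \widehat\delta_n}$, to write
\begin{gather}
\widehat{\Psi}_n^{CV-TMLE} - \Psi(P_0) = E_{B_n}(P^1_{n,B_n} - P_0) D^*_{\widehat\delta_n}(\widehat{P}^*_{n,B_n,\widehat\delta_n}) + b_0(\widehat\delta_n) + R_{\widehat\delta_n}(\widehat{P}^*_{n,B_n,\widehat\delta_n}, P_0).
\end{gather}
The bias term $b_0(\widehat\delta_n)$ and the remainder term are handled exactly as in Theorem \ref{asymptotic_normality_thm}: the undersmoothing built into $\widehat\delta_{\epsilon,n}$ (via the slightly-faster-than-optimal rate) makes $b_0(\widehat\delta_n) = o_P(n^{-1/2}\sigma_\infty(\widehat\delta_n))$ by the case analysis using Lemma \ref{link_gamma_nu-lemma} and Lemma \ref{lemma-rate-estimators}, and assumption \textbf{A10} then forces $R_{\widehat\delta_n}(\widehat{P}^*_{n,B_n,\widehat\delta_n}, P_0) = o_P(n^{-1/2}\sigma_\infty(\widehat\delta_n))$ after multiplying through by $\widehat{C}_{\sigma,n}^{-1}\sqrt n\,\widehat\delta_n^{\widehat\gamma_n}$ and using $\widehat\delta_n^{\widehat\gamma_n}\widehat{C}_{\sigma,n}^{-1} \sim_P \sigma_\infty(\widehat\delta_n)^{-1}$ (Lemma \ref{lemma-rate-estimators}).

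The heart of the proof — and the place where it genuinely departs from Theorem \ref{asymptotic_normality_thm} — is the empirical process term $E_{B_n}(P^1_{n,B_n} - P_0) D^*_{\widehat\delta_n}(\widehat{P}^*_{n,B_n,\widehat\delta_n})$. Here the integrand is itself a data-dependent (and split-dependent) function, so one cannot condition on a single training sample and apply a fixed-function CLT as was done for the one-step estimator. My plan is to split this into
\begin{gather}
E_{B_n}(P^1_{n,B_n} - P_0)\big(D^*_{\widehat\delta_n}(\widehat{P}^*_{n,B_n,\widehat\delta_n}) - D^*_{\widehat\delta_n}(P_\infty)\big) + E_{B_n}(P^1_{n,B_n} - P_0) D^*_{\widehat\delta_n}(P_\infty),
\end{gather}
and to show the first piece is $o_P(n^{-1/2}\sigma_\infty(\widehat\delta_n))$ while the second is asymptotically normal at the claimed scale. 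For the first piece I would use an empirical-process equicontinuity argument: assumption \textbf{A12} gives a polynomial uniform covering-number bound for the class $\mathcal{F}_n$ of differences $D^*_\delta(\widehat{P}^1_{n,B_n,\delta,\epsilon}) - D^*_\delta(P_\infty)$, so that class is $P_0$-Donsker-type (after appropriate rescaling by $\delta^{\gamma_{0,\infty}}$ to control the envelope, using \textbf{A11}), and assumption \textbf{A9} controls the $L_2(P_0)$ size of the particular element we plug in, namely $\|D^*_{\widehat\delta_n}(\widehat{P}^*_{n,B_n,\widehat\delta_n}) - D^*_{\widehat\delta_n}(P_\infty)\|_{L_2(P_0)} = O_P(n^{-r_1}\widehat\delta_n^{-\gamma_{0,\infty}})$; combining a maximal inequality over $\mathcal{F}_n$ with this $L_2$ bound yields that the centered empirical process evaluated at this element is $o_P(n^{-1/2}\widehat\delta_n^{-\gamma_{0,\infty}})$, hence $o_P(n^{-1/2}\sigma_\infty(\widehat\delta_n))$ after multiplying by $\widehat{C}_{\sigma,n}^{-1}\widehat\delta_n^{\widehat\gamma_n}$. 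The cross-validation (averaging over $B_n$) only helps here, by the standard argument that it does not inflate rates.

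For the leading term $\widehat{C}_{\sigma,n}^{-1}\sqrt n\,\widehat\delta_n^{\widehat\gamma_n}\,E_{B_n}(P^1_{n,B_n} - P_0) D^*_{\widehat\delta_n}(P_\infty)$, I would use \textbf{A11}: $\widehat\delta_n^{\gamma_{0,\infty}} D^*_{\widehat\delta_n}(P_\infty) \to L_\infty$ in $L_2(P_0)$, so after normalization the integrand is asymptotically a fixed function $L_\infty / \|L_\infty\|_{L_2(P_0)}$ (and one checks $\|L_\infty\|_{L_2(P_0)} = C_{\sigma,\infty}$ up to the normalization conventions, since $\sigma_\infty(\delta) \sim C_{\sigma,\infty}\delta^{-\gamma_{0,\infty}}$ and $\delta^{\gamma_{0,\infty}}D^*_\delta(P_\infty)$ has $L_2$-norm $\to \|L_\infty\|$). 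Then a Lindeberg CLT for triangular arrays — conditioning on the split $B_n$, applying it on the validation fold whose size is $\asymp n$, and invoking dominated convergence to remove the conditioning, exactly as referenced in the proof of Theorem \ref{asymptotic_normality_thm} via Lemma \ref{lemma-diff_sigma_hat-sigma_inf} — gives convergence to $\mathcal{N}(0,1)$. The replacement of $\widehat\delta_n^{\widehat\gamma_n}$-normalization by $\widehat\delta_n^{\gamma_{0,\infty}}$-normalization is justified because $\widehat\gamma_n - \gamma_{0,\infty} = o_P(1/\log n)$ (Lemma \ref{lemma-rate-estimators}) and $\widehat\delta_n$ is polynomial in $n$, so $\widehat\delta_n^{\widehat\gamma_n - \gamma_{0,\infty}} = n^{O_P(1/\log n)} \to_P 1$. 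Gathering the three terms yields the stated convergence.

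I expect the main obstacle to be the empirical-process control of $E_{B_n}(P^1_{n,B_n} - P_0)(D^*_{\widehat\delta_n}(\widehat{P}^*_{n,B_n,\widehat\delta_n}) - D^*_{\widehat\delta_n}(P_\infty))$: one must carefully justify that the rescaled class has an integrable, well-behaved envelope uniformly in $n$ (this is where the interplay of \textbf{A11}, \textbf{A12}, and the $\delta^{\gamma_{0,\infty}}$ rescaling is delicate, because the raw envelope $F_n$ blows up as $\delta \to 0$), and that the TMLE fluctuation step — which moves $\widehat{P}^0_{n,B_n}$ to $\widehat{P}^*_{n,B_n,\delta}$ along the universal least favorable submodel — keeps the fitted $D^*$ inside a class with the assumed covering number (i.e. that \textbf{A12}'s class, indexed by $(\delta,\epsilon)$, genuinely contains the plug-in element for the data-driven $\epsilon_n$). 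Everything else is a fairly mechanical adaptation of the Theorem \ref{asymptotic_normality_thm} argument with $n$ in place of $l_{3,n} - l_{2,n}$.
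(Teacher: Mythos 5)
Your proposal follows essentially the same decomposition and strategy as the paper: the CV-TMLE self-consistency property yields the empirical-process/bias/remainder split, the bias and remainder terms are handled by reusing the Theorem~\ref{asymptotic_normality_thm} arguments with \textbf{A10} in place of \textbf{A7}, and the empirical-process term is split into a data-dependent difference $E_{B_n}(P^1_{n,B_n}-P_0)\bigl(D^*_{\hat\delta_n}(\hat P^*_{n,B_n,\hat\delta_n})-D^*_{\hat\delta_n}(P_\infty)\bigr)$ (controlled by an equicontinuity argument resting on \textbf{A12} and the $L_2$ rate in \textbf{A9}, i.e.\ by Lemma~\ref{lemma-CV-TMLE}) and a leading term involving $D^*_{\hat\delta_n}(P_\infty)$.

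One place where your sketch compresses a step the paper makes explicit: a Lindeberg CLT cannot be applied directly to $E_{B_n}(P^1_{n,B_n}-P_0)D^*_{\hat\delta_n}(P_\infty)$ because $\hat\delta_n$ is random (estimated from the first two subsamples). The paper establishes the CLT at the deterministic oracle rate $\delta_{0,\infty,n}$, and then shows that replacing $\delta_{0,\infty,n}$ with $\hat\delta_n$ and $\delta_{0,\infty,n}^{\gamma_{0,\infty}}$ with $\hat\delta_n^{\hat\gamma_n}$ costs only $o_P(1)$: the normalization swap is handled exactly as you say via Lemma~\ref{lemma-rate-estimators}, but the swap of $D^*_{\hat\delta_n}(P_\infty)$ for $D^*_{\delta_{0,\infty,n}}(P_\infty)$ inside the centered empirical process requires a second application of Lemma~\ref{lemma-CV-TMLE}, using \textbf{A11} to show $\bigl\lVert\delta_{0,\infty,n}^{\gamma_{0,\infty}}\bigl(D^*_{\hat\delta_n}(P_\infty)-D^*_{\delta_{0,\infty,n}}(P_\infty)\bigr)\bigr\rVert_{L_2(P_0)}=o_P(1)$. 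You correctly flag \textbf{A11} as the relevant assumption and correctly identify this as the delicate part, but you describe the leading-term CLT as if the integrand were already a fixed function after rescaling, which folds the two steps together informally rather than executing the needed equicontinuity argument a second time.
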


\section{Examples}

\subsection{Estimation of a probability density function at a point}

We provide here a direct application of our estimators in the context of example 1, namely estimation of a p.d.f. at point.

\medskip

We remind the reader of the notations. We denote $P$ a probabibility distribution of the random variable $O$, which takes values in $\mathcal{O} \subset \mathbb{R}$. We denote $p$ the density of $P$ with respect to the Lebesgue measure. We denote $P_0$ the data-generating distribution of $O_1,...,O_n$. Our target parameter here is $\Psi(P_0) \equiv p_0(x)$, for some $x \in \mathcal{O}$. We consider smoothed parameters of the form $\Psi_{\delta}(P) = E_P\{ \delta^{-1}K((\cdot - x)/\delta)\}$, with $K$ a kernel as described above. As $\Psi_{\delta}$ is linear, the remainder $R_{\delta}$ in the first order expansion \eqref{first_order_expansion-smoothed} is zero. Recall that the canonical gradient of $\Psi_{\delta}$ at $P$ is given by $D^*_\delta(P) = \delta^{-1} K((\cdot - x)/\delta) - \delta^{-1} P K((\cdot - x)/\delta)$.

We will use the empirical probability distributions $P_{i,n}$, $i=1,2$, as initial estimators $\widehat{P}_{i,n}$, $i=1,2$. An initial estimator of $\Psi_{\delta}(P_0)$ is then given by $\Psi_{\delta}(P_{i,n}) \equiv P_{i,n} \{\delta^{-1} K((\cdot - x)/\delta)\}$.

It can easily be observed that $D^*_{\delta}(P_n) \xrightarrow{L_2(P_0)} D^*_{\delta}(P_0)$. Thus $D^*_{\delta, \infty} = D^*_\delta(P_0)$.

\medskip

Let us now examine the assumptions in this context.

It is easy to check that, under very mild assumptions (e.g. continuity of $p_0$ and $K$), we have that $\sigma_\infty(\delta) \sim C_{\sigma, 0, \infty} \delta^{-1/2}$, $\sigma'_\infty(\delta) \sim C_{\sigma', 0, \infty} \delta^{-3/2}$, $\sigma_\infty''(\delta) = O(\delta^{-5/2})$  and $P_0 (H_{0,\delta, \infty} - P_0 H_{0,\delta, \infty} )^2 \sim C_H \delta^{-3}$, for some positive constants $C_{\sigma, 0, \infty}$, $C_{\sigma', 0, \infty}$, and $C_H$. Kernel density estimation literature (see e.g. \cite{stone1984}) shows that if $p_0$ is $J_0$ times continously differentiable at $x_0$ and $K$ is a $J_K$-order kernel, then $b_0(\delta) \sim C_{b,0} \delta^{\min(J_0, J_K - 1)}$, $b_0'(\delta) \sim C_{b',0} \delta^{\min(J_0, J_K - 1) - 1}$ and $b_0''(\delta) = O\left(\delta^{\min(J_0, J_K - 2) - 1}\right)$, for some positive constants $C_{b, 0}$ and $C_{b',0}$. Therefore, $p_0$ being $J_0 \geq 1$ times continuously differentiable ensures that \textbf{A1} is satisfied. Note that this is just a sufficient condition.

As $R_{\delta} = 0$, \textbf{A3}, \textbf{A5}, \textbf{A7}, and \textbf{A8} are trivially verified. One also readily shows that $\sigma_n(\delta) = \sigma_\infty(\delta)$, for all $\delta$. Therefore \textbf{A2}, \textbf{A6} also hold. 

This proves the following corrolary of theorem \ref{asymptotic_normality_thm} and theorem \ref{MSE_optimality_theorem}.

\begin{corrolary}
Consider the setting and notations of example 1, recalled above.

Then, assumptions \textbf{A2} through \textbf{A8} are verified.

If one further assumes \textbf{A1}, then we have that
\begin{gather}
\widehat{C}_{\sigma, n}^{-1} \hat{\delta}_{\epsilon, n}^{\widehat{\gamma}_n} \left(l_{3,n} - l_{2,n}\right)^\frac{1}{2} \left(\widehat{\Psi}_n(\hat{\delta}_{\epsilon, n}) - \Psi(P_0) \right) \xrightarrow{d} \mathcal{N}(0,1).
\end{gather}

Also, the smoothing level selector $\hat{\delta}_{0,n}$ is asymptotically rate-optimal in the sense that
\begin{gather}
\frac{MSE_n(\widehat{\delta}_{0, n})}{MSE_n(\delta^*_{0,n})} \xrightarrow{P} K(p_2),
\end{gather}
where $K(p_2)$ is a constant that is a decreasing function of $p_2$.

Finally, the probability that the target parameter $\Psi(P_0) = p_0(x_0)$ belongs to the confidence interval
\begin{gather}
CI'_{\alpha, \epsilon, n} \equiv \left[\widehat{\Psi}_n\left(\hat{\delta}_{0,n}\right) \mp q_{1-\alpha/2} \frac{\widehat{C}_{\sigma, n}}{\left(l_{3,n} - l_{2,n} \right)^{\frac{1}{2} - \left(\hat{r}_n + \epsilon\right) \hat{\gamma}_n}} \right]
\end{gather}
converges to one as the sample size $n$ tends to infinity.

\end{corrolary}

\subsection{Estimation of a mean counterfactual outcome}

We illustrate here our method in the case of the estimation of a counterfactual mean outcome, under known treatment mechanism, as presented in example 2.

\medskip

The target parameter mapping here is defined, for all $P \in \mathcal{M}$ as $\Psi(P) \equiv E_P\left[E_P[Y | A = 1, W] \right]$. We consider smoothed parameters of the form $\Psi_{\delta}(P) \equiv E_P \left[\frac{g_0(1|W)}{g_{0, \delta}(1|W)} E_P\left[Y | A =1, W\right] \right]$. Note that $\Psi_{\delta}$ is linear in $P$ and thus the remainder term $\Psi_{\delta}$ in the first order expansion \eqref{first_order_expansion-smoothed} is zero.

Recall that for $P \in \mathcal{M}$ the likelihood $p \equiv \frac{dP}{d\mu}$ factors as $p = q_Y q_W g_0$, where $q_Y$ is the conditional likelihood of the outcome given the treatment value and baseline covariate, $q_W$ is the likelihood of the baseline covariates and $g_0$ is the previously introduced conditional likelihood of treatment given the baseline covariates. 

Observe that $\Psi_{\delta}(P)$ only depends on $P$ through $Q_W$ and $\bar{Q}(1,W) \equiv E_{Q_Y}[Y | A = 1, W]$, and that $D^*_\delta(P)$ only depends on $P$ through $Q_W$, $g_0$ and $\bar{Q}(1,W)$. Therefore, in the definition of our initial estimator $\widehat{P}$, we need only specify estimators of $Q_W$ and of $\bar{Q}$. We will use the empirical distribution $Q_{W,n}$ of $W_1,...,W_n$ as initial estimate of $Q_W$. We will estimate the regression function $\bar{Q}$ with a kernel regression estimate $\widehat{\bar{Q}}_n$. 
We will use a kernel regression estimator that is uniformly consistent with respect to its limit.

\medskip

Let us now examine the assumptions of our method in this context. As $R_\delta = 0$, \textbf{A3} and \textbf{A5} are trivially verified.

Let us turn to the assumptions \textbf{A2} and \textbf{A6}. The following results prove useful.

\begin{lemma}\label{lemma-preA2-example2}
Consider the setting of example 2, recalled above.
We have that
\begin{gather}
|\sigma_{1,n}(\delta) - \sigma_\infty(\delta)| \leq \delta^{-\frac{1}{2}} \|\hat{\bar{Q}}_n - \bar{Q}_\infty \|_{L_2(P_0)}.
\end{gather}
\end{lemma}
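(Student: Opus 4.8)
The plan is to use that, in this example, $\Psi_\delta$ is linear in $P$ (so $R_\delta \equiv 0$) and, more importantly, that $A$ is binary, which collapses the two $\bar Q$-dependent terms of $D^*_\delta$ into a single term proportional to $\widehat{\bar Q}_n(1,\cdot) - \bar Q_\infty(1,\cdot)$; everything else in the gradient is either known ($g_0$) or cancels upon centering (the additive constant $\Psi_\delta$ and the $Q_W$-part).

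First, I would note that $\sigma_{1,n}(\delta) = \lVert D^*_\delta(\widehat P_{1,n}) - P_0 D^*_\delta(\widehat P_{1,n})\rVert_{L_2(P_0)}$ and $\sigma_\infty(\delta) = \lVert D^*_{\delta,\infty} - P_0 D^*_{\delta,\infty}\rVert_{L_2(P_0)}$. Writing $D^*_\delta(\widehat P_{1,n}) = f_n - \Psi_\delta(\widehat P_{1,n})$ with
$$f_n(O) \equiv \frac{A}{g_{0,\delta}(1|W)}\bigl(Y - \widehat{\bar Q}_n(A,W)\bigr) + \frac{g_0(1|W)}{g_{0,\delta}(1|W)}\widehat{\bar Q}_n(1,W),$$
and $D^*_{\delta,\infty} = f_\infty - \Psi_\delta(P_\infty)$ with $f_\infty$ defined analogously from $\bar Q_\infty$, the additive constants cancel under centering, so $\sigma_{1,n}(\delta) = \lVert f_n - P_0 f_n\rVert_{L_2(P_0)}$ and $\sigma_\infty(\delta) = \lVert f_\infty - P_0 f_\infty\rVert_{L_2(P_0)}$. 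By the reverse triangle inequality in $L_2(P_0)$ and the fact that subtracting the $P_0$-mean is an $L_2(P_0)$-contraction, $|\sigma_{1,n}(\delta) - \sigma_\infty(\delta)| \le \lVert (f_n - f_\infty) - P_0(f_n - f_\infty)\rVert_{L_2(P_0)} \le \lVert f_n - f_\infty\rVert_{L_2(P_0)}$.

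Second, I would compute $f_n - f_\infty$ explicitly. Setting $h(w) \equiv \widehat{\bar Q}_n(1,w) - \bar Q_\infty(1,w)$ and using $A\in\{0,1\}$, hence $A\,\widehat{\bar Q}_n(A,W) = A\,\widehat{\bar Q}_n(1,W)$ and likewise for $\bar Q_\infty$, the two $\bar Q$-terms combine:
$$f_n - f_\infty = -\frac{A}{g_{0,\delta}(1|W)}h(W) + \frac{g_0(1|W)}{g_{0,\delta}(1|W)}h(W) = \frac{g_0(1|W) - A}{g_{0,\delta}(1|W)}\,h(W).$$
Taking the squared $L_2(P_0)$-norm, conditioning on $W$, and using $E_{P_0}[(A - g_0(1|W))^2\mid W] = g_0(1|W)(1 - g_0(1|W)) \le g_0(1|W)$ together with the elementary bound $g_0(1|W)/g_{0,\delta}(1|W)^2 \le \delta^{-1}$ (split on whether $g_0(1|W)\ge\delta$), I obtain $\lVert f_n - f_\infty\rVert_{L_2(P_0)}^2 \le \delta^{-1} E_{P_0}[h(W)^2] \le \delta^{-1}\lVert \widehat{\bar Q}_n - \bar Q_\infty\rVert_{L_2(P_0)}^2$; taking square roots gives the claim.

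I do not anticipate a real obstacle. The two points needing care are (i) checking that the $\Psi_\delta$ term and the $Q_W$-dependence genuinely drop out under centering, so that the (possibly slow) convergence of the empirical $Q_W$ and of $\Psi_\delta(\widehat P_{1,n})$ play no role in this bound, and (ii) the algebraic collapse of the two $\bar Q$-terms, which is special to binary $A$ (the continuous-treatment case of example 3 would instead produce a $K_{\delta,a_0}(A)/g_0(A|W)$-weighted term and a different bound).
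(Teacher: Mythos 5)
Your proof is correct and follows essentially the same route as the paper: the paper first proves an intermediate inequality $|\sigma_{1,n}(\delta)-\sigma_\infty(\delta)|^2 \le P_0\,\frac{g_0(1|W)}{g_{0,\delta}^2(1|W)}(\hat{\bar Q}_n-\bar Q_\infty)^2$ via a Cauchy--Schwarz factoring of $\sigma_{1,n}^2-\sigma_\infty^2$ (equivalent to your reverse triangle inequality in $L_2(P_0)$), computes exactly the same difference $\frac{g_0(1|W)-A}{g_{0,\delta}(1|W)}(\hat{\bar Q}_n-\bar Q_\infty)$ you call $f_n-f_\infty$, and then applies $g_0/g_{0,\delta}^2\le\delta^{-1}$. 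The only cosmetic differences are that you invoke the reverse triangle inequality and mean-removal contraction explicitly (the paper keeps the centered form, which is in fact already mean-zero here since $E_{P_0}[g_0(1|W)-A\mid W]=0$), and the paper splits the argument across two lemmas so that the intermediate bound can be reused later.
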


Therefore, as long as
$\|\hat{\bar{Q}}_n - \bar{Q}_\infty \|_{L_2(P_0)} = O_P \left(n^{-r_Q}\right)$ for some $r_Q > 0$, lemma \ref{lemma-preA2-example2} ensures that assumption \textbf{A2} holds. This is a very mild condition.

\begin{lemma}\label{lemma-sigma_n_sigma_inf-EY1-example}
Assume 
\begin{equation}\label{uniform_consistency_of_ratio}
\left\lVert \frac{\left(\hat{\bar{Q}}_n - \bar{Q}_\infty\right)^2}{\bar{Q}_0 (1 - \bar{Q}_0 ) + \left(\bar{Q}_\infty - \bar{Q}_0 \right)^2} \right\rVert_{L_\infty(P_0)} \xrightarrow{P} 0.
\end{equation}

Then for any non-negative sequence $\delta_n$ that converges to zero, we have that 
$\sigma_n(\delta_n) \sim_P \sigma_\infty(\delta_n)$, i.e. assumption \textbf{A6} is verified.
\end{lemma}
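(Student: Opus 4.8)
The plan is to bound the discrepancy between the two centered gradients directly in $L_2(P_0)$. Recall that $\sigma_{2,n}(\delta)=\lVert D^*_\delta(\widehat{P}_{2,n})-P_0D^*_\delta(\widehat{P}_{2,n})\rVert_{L_2(P_0)}$ is the quantity appearing in \textbf{A6} (the ``$\sigma_n$'' of the statement). Write $D^*_\delta(P)=f_\delta(P)-\Psi_\delta(P)$ with $f_\delta(P)\equiv\frac{A}{g_{0,\delta}(1|W)}(Y-\bar{Q}(A,W))+\frac{g_0(1|W)}{g_{0,\delta}(1|W)}\bar{Q}(1,W)$, $\bar{Q}$ the regression function of $P$; since $g_0$ is known, $D^*_\delta(P)$ depends on $P$ only through $\bar{Q}$, and, $A$ being binary, $f_\delta(P)=\frac{A}{g_{0,\delta}(1|W)}Y+\frac{g_0(1|W)-A}{g_{0,\delta}(1|W)}\bar{Q}(1,W)$. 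Because $\Psi_\delta(P)$ is a constant, subtracting $D^*_{\delta,\infty}-P_0D^*_{\delta,\infty}$ from $D^*_\delta(\widehat{P}_{2,n})-P_0D^*_\delta(\widehat{P}_{2,n})$ leaves $\Delta_n(\delta)-P_0\Delta_n(\delta)$, with $\Delta_n(\delta)\equiv\frac{g_0(1|W)-A}{g_{0,\delta}(1|W)}(\hat{\bar{Q}}_n(1,W)-\bar{Q}_\infty(1,W))$. By the triangle inequality in $L_2(P_0)$,
\begin{equation}
|\sigma_{2,n}(\delta)-\sigma_\infty(\delta)|\leq\lVert\Delta_n(\delta)-P_0\Delta_n(\delta)\rVert_{L_2(P_0)}\leq\left(P_0\Delta_n(\delta)^2\right)^{1/2},
\end{equation}
so it suffices to show $P_0\Delta_n(\delta_n)^2/\sigma_\infty^2(\delta_n)\xrightarrow{P}0$ for the given vanishing sequence $(\delta_n)$.

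Next I would evaluate both quantities by conditioning on $W$ (and on the sample $S_{1,n}\cup S_{2,n}$, on which $\hat{\bar{Q}}_n$ is built). Since $A$ is binary, $E_{P_0}[(g_0(1|W)-A)^2\mid W]=g_0(1|W)(1-g_0(1|W))$, so, with $w_\delta(W)\equiv g_0(1|W)/g_{0,\delta}(1|W)^2$,
\begin{equation}
P_0\Delta_n(\delta)^2=E_{P_0}\left[w_\delta(W)\,(1-g_0(1|W))\,(\hat{\bar{Q}}_n-\bar{Q}_\infty)^2(1,W)\right]\leq E_{P_0}\left[w_\delta(W)(\hat{\bar{Q}}_n-\bar{Q}_\infty)^2(1,W)\right].
\end{equation}
For the denominator, the law of total variance gives $\sigma_\infty^2(\delta)\geq E_{P_0}[\mathrm{Var}_{P_0}(D^*_{\delta,\infty}\mid W)]$, and the conditional variance is computed the same way, now using that $Y$ is binary, i.e. $\mathrm{Var}_{P_0}(Y\mid A=1,W)=\bar{Q}_0(1-\bar{Q}_0)(1,W)$:
\begin{equation}
\sigma_\infty^2(\delta)\geq E_{P_0}\left[w_\delta(W)\left(\bar{Q}_0(1-\bar{Q}_0)(1,W)+(1-g_0(1|W))(\bar{Q}_\infty-\bar{Q}_0)^2(1,W)\right)\right].
\end{equation}

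To conclude, I would feed in assumption \eqref{uniform_consistency_of_ratio}, which furnishes $\epsilon_n\xrightarrow{P}0$ with $(\hat{\bar{Q}}_n-\bar{Q}_\infty)^2(1,W)\leq\epsilon_n\big(\bar{Q}_0(1-\bar{Q}_0)(1,W)+(\bar{Q}_\infty-\bar{Q}_0)^2(1,W)\big)$ $P_0$-a.s., hence $P_0\Delta_n(\delta)^2\leq\epsilon_n\,E_{P_0}[w_\delta(W)(\bar{Q}_0(1-\bar{Q}_0)+(\bar{Q}_\infty-\bar{Q}_0)^2)(1,W)]$. The only mismatch with the lower bound for $\sigma_\infty^2(\delta)$ is the extra factor $(1-g_0(1|W))$ on $(\bar{Q}_\infty-\bar{Q}_0)^2$; the gap between the two expectations is $E_{P_0}[w_\delta(W)g_0(1|W)(\bar{Q}_\infty-\bar{Q}_0)^2(1,W)]\leq\lVert\bar{Q}_\infty-\bar{Q}_0\rVert_{L_2(P_0)}^2$, since $w_\delta(W)g_0(1|W)=g_0(1|W)^2/g_{0,\delta}(1|W)^2\leq1$. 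Therefore $P_0\Delta_n(\delta)^2\leq\epsilon_n\big(2\sigma_\infty^2(\delta)+\lVert\bar{Q}_\infty-\bar{Q}_0\rVert_{L_2(P_0)}^2\big)$, and since $\sigma_\infty^2(\delta)$ is bounded away from $0$ for small $\delta$ — it dominates $E_{P_0}[w_\delta(W)\bar{Q}_0(1-\bar{Q}_0)(1,W)]\geq E_{P_0}[\bar{Q}_0(1-\bar{Q}_0)(1,W)\mathbf{1}\{g_0(1|W)\geq\delta\}]$, eventually positive under mild non-degeneracy — dividing by $\sigma_\infty^2(\delta_n)$ and letting $\delta_n\to0$ gives $P_0\Delta_n(\delta_n)^2/\sigma_\infty^2(\delta_n)\xrightarrow{P}0$, so $\sigma_{2,n}(\delta_n)\sim_P\sigma_\infty(\delta_n)$; applying this to $\delta_n=(l_{3,n}-l_{2,n})^{-r}$ yields \textbf{A6}, and the crude bound $w_\delta(W)(1-g_0(1|W))\leq\delta^{-1}$ in the second display reproves Lemma \ref{lemma-preA2-example2}. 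I expect the only delicate step to be the conditional-variance computation of the second paragraph — and, with it, the identification of $\bar{Q}_0(1-\bar{Q}_0)+(1-g_0)(\bar{Q}_\infty-\bar{Q}_0)^2$ as the ``irreducible'' quantity that controls $\sigma_\infty^2(\delta)$ uniformly over the shrinking sequence $\delta_n$; everything else is bookkeeping.
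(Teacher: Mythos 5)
Your proof is correct and rests on the same core strategy as the paper's: bound $\lvert\sigma_{2,n}(\delta)-\sigma_\infty(\delta)\rvert$ by a weighted $L_2(P_0)$-distance between $\hat{\bar{Q}}_n$ and $\bar{Q}_\infty$, pull out the $\epsilon_n$ factor from \eqref{uniform_consistency_of_ratio}, and compare against $\sigma_\infty^2(\delta)$. Your identity
$P_0\Delta_n(\delta)^2 = E_{P_0}\bigl[w_\delta(W)\,(1-g_0(1|W))\,(\hat{\bar{Q}}_n-\bar{Q}_\infty)^2(1,W)\bigr]$
is exactly what the paper derives inside the proof of Lemma \ref{lemma_key_inequality_example2} before loosely dropping the $1-g_0$ factor, and your reverse triangle inequality for $\lvert\lVert X\rVert-\lVert Y\rVert\rvert$ replaces the paper's equivalent Cauchy--Schwarz manipulation; these are cosmetic. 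Where you genuinely diverge is the treatment of the denominator: the paper expands $\sigma_\infty^2(\delta)$ explicitly into the dominant term $I_\infty^2(\delta)\equiv P_0\bigl[w_\delta(W)(\bar{Q}_0(1-\bar{Q}_0)+(\bar{Q}_\infty-\bar{Q}_0)^2)\bigr]$ plus two bounded remainder terms and then argues by cases ($\sigma_\infty(\delta)\to\infty$ versus $\sigma_\infty(\delta)\to C<\infty$), whereas your law-of-total-variance lower bound
$\sigma_\infty^2(\delta)\geq E_{P_0}\bigl[\mathrm{Var}_{P_0}(D^*_{\delta,\infty}\mid W)\bigr]=E_{P_0}\bigl[w_\delta(W)\bigl(\bar{Q}_0(1-\bar{Q}_0)+(1-g_0)(\bar{Q}_\infty-\bar{Q}_0)^2\bigr)\bigr]$
handles both regimes in one stroke, with the bounded gap $E_{P_0}\bigl[(g_0/g_{0,\delta})^2(\bar{Q}_\infty-\bar{Q}_0)^2\bigr]\leq\lVert\bar{Q}_\infty-\bar{Q}_0\rVert^2_{L_2(P_0)}$ absorbed because $\sigma_\infty^2(\delta_n)$ is eventually bounded away from zero. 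This buys a unified argument and a marginally sharper bound; the paper's route is more explicit about what makes $\sigma_\infty^2$ large but requires the case split. Both are valid and your ``delicate step'' (the conditional-variance computation) is carried out correctly.
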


Under certain conditions, notably on the rate of the bandwidth \textit{(add citation!!!!! + refer to new tech report about uniform consistency of HAL)}, kernel regression estimates are uniformly consistent, i.e. $\| \hat{\bar{Q}}_n - \bar{Q}_0 \|_{L_\infty(P_0)}$ converges to zero in probability. If one further assumes that $\bar{Q}_0$ is bounded away from $0$ and $1$, this ensures that assumption $\eqref{uniform_consistency_of_ratio}$ is satisfied.

This discussion thus proves the following corrolary of theorem \ref{asymptotic_normality_thm} and theorem \ref{MSE_optimality_theorem}.

\begin{corrolary}
Consider the setting and notations of example 2, recalled above.

Then, assumptions \textbf{A3}, \textbf{A5}, \textbf{A7} and \textbf{A8} are verified.

Assume 
\begin{gather}
\left\lVert\hat{\bar{Q}}_n - \bar{Q}_\infty\right\rVert_{L_2(P_0)} = O_P \left(n^{-r_Q}\right),
\end{gather}
for some $r_Q > 0$, and that
\begin{gather}
\left\lVert \frac{\left(\hat{\bar{Q}}_n - \bar{Q}_\infty\right)^2}{\bar{Q}_0 (1 - \bar{Q}_0 ) + \left(\bar{Q}_\infty - \bar{Q}_0 \right)^2} \right\rVert_{L_\infty(P_0)} = o_P(1).
\end{gather}
Then assumptions \textbf{A2} and \textbf{A6} are verified. 

If one further assumes \textbf{A1}, we then have that
\begin{gather}
\widehat{C}_{\sigma, n}^{-1} \hat{\delta}_{\epsilon, n}^{\widehat{\gamma}_n} \left(l_{3,n} - l_{2,n}\right)^\frac{1}{2} \left(\widehat{\Psi}_n(\hat{\delta}_{\epsilon, n}) - \Psi(P_0) \right) \xrightarrow{d} \mathcal{N}(0,1).
\end{gather}

Also, the smoothing level selector $\hat{\delta}_{0,n}$ is asymptotically rate-optimal in the sense that
\begin{gather}
\frac{MSE_n(\widehat{\delta}_{0, n})}{MSE_n(\delta^*_{0,n})} \xrightarrow{P} K(p_2),
\end{gather}
where $K(p_2)$ is a constant that is a decreasing function of $p_2$.

Finally, the probability that the target parameter $\Psi(P_0) = E_{P_0}E_{P_0}[Y | A = 1, W]$ belongs to the confidence interval
\begin{gather}
CI'_{\alpha, \epsilon, n} \equiv \left[\widehat{\Psi}_n\left(\hat{\delta}_{0,n}\right) \mp q_{1-\alpha/2} \frac{\widehat{C}_{\sigma, n}}{\left(l_{3,n} - l_{2,n} \right)^{\frac{1}{2} - \left(\hat{r}_n + \epsilon\right) \hat{\gamma}_n}} \right]
\end{gather}
converges to one as the sample size $n$ tends to infinity.

\end{corrolary}

\subsection{Estimation of dose-response curve at a fixed dose value}

We demonstrate here our method in the case of a the estimation of the dose-response curve at a fixed dose value $a_0 \in [0, 1]$.

Recall that our target parameter is defined for all $P \in \mathcal{M}$ by $\Psi_{a_0}(P) \equiv E_P E_P \left[Y | A = a_0, W\right]$. Our approximating family is defined by the kernel smoothed parameters $\Psi_{a_0}(P) \equiv \int_a K_{\delta, a_0}(a) \Psi_{a}(P) da$, where $K_{\delta, a_0}(a) = \delta^{-1} K((a - a_0) / \delta )$. Note that $\Psi_{a_0, \delta}$ is linear in $P$ and thus the remainder term $R_\delta$ in the first order expansion \eqref{first_order_expansion-smoothed} is zero.

Recall that for $P \in \mathcal{M}$ the likelihood $p \equiv \frac{dP}{d\mu}$ factors as $p = q_Y q_W g_0$, where $q_Y$ is the conditional likelihood of the outcome given the treatment value and baseline covariate, $q_W$ is the likelihood of the baseline covariates and $g_0$ is the previously introduced conditional likelihood of treatment given the baseline covariates.

Observe that $\Psi_{a_0, \delta}(P)$ and $D^*_{a_0, \delta}(P)$ only depends on $P$ through $Q_W$ and $\bar{Q}(a, W) = E_{Q_Y} \left[Y | A = a, W \right]$. Therefore, in the definition of our initial estimator $\hat{P}$, we only need to specify estimators of $Q_W$ and $\bar{Q}$. We will use the empirical distribution $Q_{W, n}$ of $W_1,...,W_n$ as initial estimator of $Q_W$. We will use a nonparametric estimator of $\bar{Q}$ whose required properties will be made clear below.

\medskip

Let us now examine the assumptions of our method in this context. As $R_\delta = 0$, \textbf{A3} and \textbf{A4} are trivially verified.

Let us turn to the assumptions \textbf{A2} and \textbf{A6}. The following lemma proves useful.

\begin{lemma}\label{lemma-preA2-example3}
Consider the dose response curve of example 3, recalled in this section.

Assume that $\|g_0^{-1}\|_{L_\infty(P_0)} < \infty$.

Then, we have that
\begin{gather}
|\sigma_{1, n}(\delta) - \sigma_\infty(\delta)| \leq O_P\left(\delta^{-\frac{1}{2}} \|\hat{\bar{Q}}_n - \bar{Q}_\infty \|_{L_\infty(P_0)} \right).
\end{gather}
\end{lemma}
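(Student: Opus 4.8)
The plan is to expand $\sigma_{1,n}^2(\delta) - \sigma_\infty^2(\delta)$ and control it, then pass to the difference of the square roots. Recall that $\sigma_{1,n}^2(\delta) = P_0(D^*_{a_0,\delta}(\widehat{P}_{1,n}) - P_0 D^*_{a_0,\delta}(\widehat{P}_{1,n}))^2$ and $\sigma_\infty^2(\delta) = P_0(D^*_{a_0,\delta,\infty} - P_0 D^*_{a_0,\delta,\infty})^2$, where in this example $D^*_{a_0,\delta}(P)$ depends on $P$ only through $Q_W$ and $\bar{Q}$, the $Q_W$-part being estimated by the empirical distribution. Since the centering map $f \mapsto f - P_0 f$ is a contraction on $L_2(P_0)$, I would first reduce to bounding $\|D^*_{a_0,\delta}(\widehat{P}_{1,n}) - D^*_{a_0,\delta,\infty}\|_{L_2(P_0)}$ and then use the elementary inequality $|\,\|u\|_{L_2(P_0)} - \|v\|_{L_2(P_0)}\,| \le \|u - v\|_{L_2(P_0)}$ applied to the centered gradients to get $|\sigma_{1,n}(\delta) - \sigma_\infty(\delta)| \le \|D^*_{a_0,\delta}(\widehat{P}_{1,n}) - D^*_{a_0,\delta,\infty}\|_{L_2(P_0)}$.

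Next I would plug in the explicit form of the canonical gradient, $D^*_{a_0,\delta}(P) = \frac{K_{a_0,\delta}(A)}{g_0(A|W)}(Y - \bar Q(A,W)) + \int_a K_{a_0,\delta}(a)\bar Q(a,W)\,da - \Psi_{a_0,\delta}(P)$. Taking the difference at $\widehat P_{1,n}$ (with $\bar Q = \widehat{\bar Q}_n$) versus at $P_\infty$ (with $\bar Q = \bar Q_\infty$), the $Q_W$-empirical versus $Q_{W,\infty}$ discrepancy only enters the deterministic centering constant $\Psi_{a_0,\delta}$, which drops out after centering; what remains is governed by the two terms $\frac{K_{a_0,\delta}(A)}{g_0(A|W)}(\bar Q_\infty - \widehat{\bar Q}_n)(A,W)$ and $\int_a K_{a_0,\delta}(a)(\widehat{\bar Q}_n - \bar Q_\infty)(a,W)\,da$. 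For the first term, I would bound $\|K_{a_0,\delta}/g_0\|$-type factors: using $\|g_0^{-1}\|_{L_\infty(P_0)} < \infty$ and the change of variables $a = a_0 + \delta u$, the $L_2(P_0)$-norm of $\frac{K_{a_0,\delta}(A)}{g_0(A|W)}h(A,W)$ is at most $\|g_0^{-1}\|_{L_\infty}\,\|h\|_{L_\infty(P_0)}\,\bigl(\int K_{a_0,\delta}(a)^2\,da\bigr)^{1/2}$ up to a bounded density factor, and $\int K_{a_0,\delta}^2 = \delta^{-1}\int K^2 \asymp \delta^{-1}$, giving the $\delta^{-1/2}$ rate with $h = \bar Q_\infty - \widehat{\bar Q}_n$. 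For the second term, $\int_a K_{a_0,\delta}(a) h(a,W)\,da$ is bounded in sup-norm by $\|h\|_{L_\infty(P_0)}\int K_{a_0,\delta} = \|h\|_{L_\infty(P_0)}$, hence its $L_2(P_0)$-norm is $O(\|\widehat{\bar Q}_n - \bar Q_\infty\|_{L_\infty(P_0)})$, which is dominated by the $\delta^{-1/2}$ term as $\delta \to 0$. Collecting these yields $|\sigma_{1,n}(\delta) - \sigma_\infty(\delta)| = O_P(\delta^{-1/2}\|\widehat{\bar Q}_n - \bar Q_\infty\|_{L_\infty(P_0)})$.

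The main obstacle I anticipate is handling the weight $K_{a_0,\delta}(A)/g_0(A|W)$ carefully: one must make sure the $L_2(P_0)$ integration against the joint law of $(A,W)$ really does produce a clean $\delta^{-1/2}$ and not something worse, which requires that the conditional density of $A$ given $W$ be bounded near $a_0$ (this is implicit in, or follows from, the model assumptions together with $\|g_0^{-1}\|_{L_\infty(P_0)} < \infty$ and boundedness of the relevant densities). A secondary technical point is that $D^*_{a_0,\delta}(\widehat P_{1,n})$ uses the empirical $Q_{W,n}$, so strictly $\sigma_{1,n}$ involves $P_0$ evaluated at a gradient whose centering constant is random; but since centering removes any additive constant, this causes no difficulty. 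Everything else is a routine application of Minkowski's inequality and the $L_2$/$L_\infty$ bounds above, so once the weight bound is in place the lemma follows.
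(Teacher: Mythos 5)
Your proposal is correct and follows essentially the same route as the paper: reduce $|\sigma_{1,n}(\delta)-\sigma_\infty(\delta)|$ to the $L_2(P_0)$-norm of the difference of centered canonical gradients, substitute the explicit form in which only $\hat{\bar{Q}}_n-\bar{Q}_\infty$ enters, pull out $\|\hat{\bar{Q}}_n-\bar{Q}_\infty\|_{L_\infty(P_0)}$, and use the change of variables $a=a_0+\delta u$ together with $\|g_0^{-1}\|_{L_\infty(P_0)}<\infty$ to obtain the $\delta^{-1/2}$ rate. The only cosmetic difference is that you bound the two pieces of the gradient difference separately via Minkowski and argue the second is dominated, whereas the paper (via its Lemma~\ref{lemma_key_inequality_example3}) expands the square and notes that the cross term exactly cancels the square of the integral term, so the second piece drops out; also, your worry about needing a bound on the conditional density of $A$ given $W$ is unfounded, since the $g_0$ factor in $dP_0$ cancels one power of $g_0$ in the denominator, leaving only $\|g_0^{-1}\|_{L_\infty(P_0)}$ as you in fact use.
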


Therefore, if $\hat{\bar{Q}}_n$ converges uniformly with a polynomial rate, i.e. if $\|\hat{\bar{Q}}_n - \bar{Q}_\infty\|_{L_\infty(P_0)} \leq O_P\left(n^{-r_Q}\right)$, for some $r_Q > 0$, lemma \ref{lemma-preA2-example3} guarantees that assumption \textbf{A2} is satisfied. Note that the assumption that $\|g_0^{-1}\|_{L_\infty(P_0)} < \infty$ is the so-called \textit{positivity} assumption from causal inference (see e.g. \cite{petersen2011}).

Lemma \ref{lemma-sigma_n_sigma_inf-EY1-example}
above also holds in the context of this section. (We provide a separate proof for each of these two examples in the appendix). Therefore, if one assumes that $\bar{Q}_0$ is bounded away from 0 and 1, using a uniformly consistent estimator of $\bar{Q}_0$ is enough to ensure \textbf{A6} is verified.

This discussion proves the following corrolary of theorem \ref{asymptotic_normality_thm} and \ref{MSE_optimality_theorem}.

\begin{corrolary}
Consider the setting and notations of example 2, recalled above.

Then, assumptions \textbf{A3}, \textbf{A5}, \textbf{A7} and \textbf{A8} are verified.

Assume $\| g_0^{-1} \|_{L_\infty(P_0)} < \infty$, $
\|\hat{\bar{Q}}_n - \bar{Q}_\infty\|_{L_\infty(P_0)} = O_P(n^{-r_Q})$, for some $r_Q > 0$, and that
\begin{gather}
\left\lVert \frac{\left(\hat{\bar{Q}}_n - \bar{Q}_\infty\right)^2}{\bar{Q}_0 (1 - \bar{Q}_0 ) + \left(\bar{Q}_\infty - \bar{Q}_0 \right)^2} \right\rVert_{L_\infty(P_0)} = o_P(1).
\end{gather}
Then assumptions \textbf{A2} and \textbf{A6} are verified. 

If one further assumes \textbf{A1}, we then have that
\begin{gather}
\widehat{C}_{\sigma, n}^{-1} \hat{\delta}_{\epsilon, n}^{\widehat{\gamma}_n} \left(l_{3,n} - l_{2,n}\right)^\frac{1}{2} \left(\widehat{\Psi}_n(\hat{\delta}_{\epsilon, n}) - \Psi(P_0) \right) \xrightarrow{d} \mathcal{N}(0,1).
\end{gather}

Also, the smoothing level selector $\hat{\delta}_{0,n}$ is asymptotically rate-optimal in the sense that
\begin{gather}
\frac{MSE_n(\widehat{\delta}_{0, n})}{MSE_n(\delta^*_{0,n})} \xrightarrow{P} K(p_2),
\end{gather}
where $K(p_2)$ is a constant that is a decreasing function of $p_2$.

Finally, the probability that the target parameter $\Psi(P_0) = E_{P_0}E_{P_0}[Y | A = 1, W]$ belongs to the confidence interval
\begin{gather}
CI'_{\alpha, \epsilon, n} \equiv \left[\widehat{\Psi}_n\left(\hat{\delta}_{0,n}\right) \mp q_{1-\alpha/2} \frac{\widehat{C}_{\sigma, n}}{\left(l_{3,n} - l_{2,n} \right)^{\frac{1}{2} - \left(\hat{r}_n + \epsilon\right) \hat{\gamma}_n}} \right]
\end{gather}
converges to one as the sample size $n$ tends to infinity.

\end{corrolary}

\section{Simulation results for the dose-response curve example}

We consider the following example of data-generating distribution, which we took from \citep{edwardKennedy2016}.

\begin{gather}
L \equiv (L_1, L_2, L_3, L_4) \sim \mathcal{N}(0, I_4)\\
\lambda(L) = expit(-0.8 + 0.1 L_1 + 0.1 L_2 - 0.1 L_3 + 0.2 L_4)\\ \\
A = Beta(\lambda(L), 1 - \lambda(L))\\ \\
\mu(L, A) \sim expit(1 + 0.2 L_1 + 0.2 L_2 + 0.3 L_3 - 0.1 L_4 +\\
 20 A (0.1 - 0.1 L_1 + 0.1 L_3 -0.13^2 (20 A)^2)\\ \\
Y \sim Bernouilli(\mu(L, A))
\end{gather}

We target the causal dose response curve at $a_0 = 0.15$, i.e. we want to infer $\Psi_{0.15}(P_0)$.

We compare our smoothing level selector to alternative deterministic smoothing rates. Let us first expose the rationale behind our choice of competing smoothing rates.

One can readily prove that the optimal smoothing rate depends on the smoothness of $a \mapsto \Psi_a(P_0)$ at $a_0$ and on properties of the kernel $K$. Concretely, one can show that if $a \mapsto \Psi_a(P_0)$ is $J$ times differentiable at $a_0$, and $K$ is orthogonal to polynomials of degree smaller than or equal to $J-1$, the optimal smoothing level $h^*_n \sim C^* n^{-1/(2J + 1)}$. (For instance, if $\Psi_a(P_0)$ is twice differentiable at $a_0$ and $K$ is orthogonal to all polynomials of degree at most $1$, the optimal smoothing rate is $n^{-1/5}$.)

These considerations motivate us to consider competing deterministic smoothing rates of the form $C n^{-1/5}$, $C n^{-1/7}$, $C n^{-1/9}$, with $C$ a positive constant. We then use the same type of single-fold, three-splits cross-validated estimatars of the smoothed parameters, as defined above, with these competing smoothing rates.

In addition to our single-split cross-validated one-step estimator we also used in this simulation a $V$-fold cross-validated one-step. We also computed the Targeted Maximum Likelihood Estimates presented in section \ref{CV-TMLE-section}.

We report below plots of the mean squared error (with respect to $\Psi(P_0)$) against sample size, for all of these estimators. We also present estimates of the optimal smoothing rate and coverage rates of the ensuing confidence intervals.

\begin{figure}[!htb]
\centering
\includegraphics[width = 14cm, height=9.33cm]{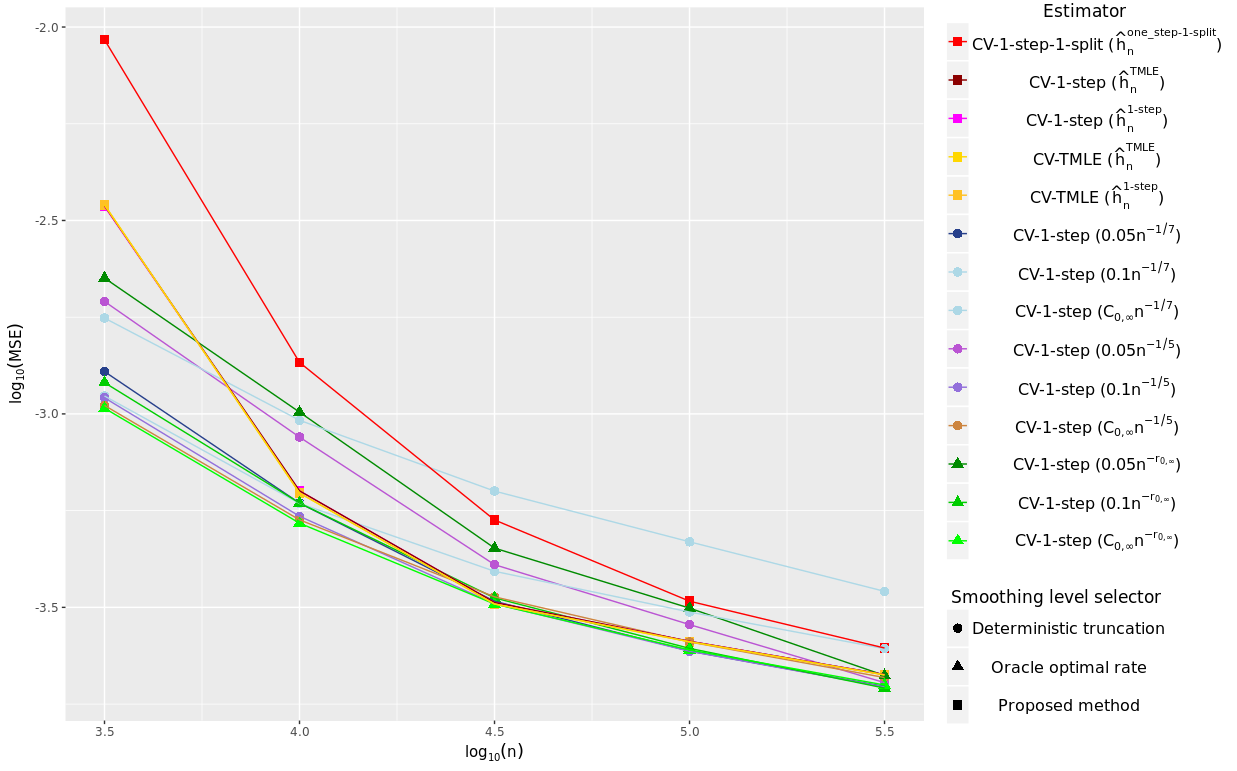}
\caption{An illustration of the performance in mean squared error (w.r.t. $\Psi(P_0)$) of our method compared to 5-fold cross-validated one-step estimators with deterministic smoothing level. The competing deterministic smoothing levels are of the form $C n^{-r}$, with $C \in \{0.05, 0.1, C_{0, \infty} \}$ and $r \in \{1/5, 1/7, r_{0, \infty} \}$.
Each point in the plot is obtained by averaging the squared error w.r.t. $\Psi(P_0)$ over 315 i.i.d. datasets sampled from the data-generating distribution described above. 
Analytic derivation show that the optimal smoothing rate is $n^{-\frac{1}{5}}$. However, Monte-Carlo simulations show that for the sample size range considered (i.e. from $10^{3.5}$ to $10^{5.5}$), the optimal smoothing level is $\approx 0.132 n^{-0.183}$.) The above plot shows that the choice of smoothing rate $n^{-1/7}$ can prove much less efficient than the oracle. Our method seems to asymptotically perform on par with the oracle optimal smoothing level.}
\end{figure}

\begin{figure}[!htb]
\centering
\includegraphics[width = 14cm, height=9.33cm]{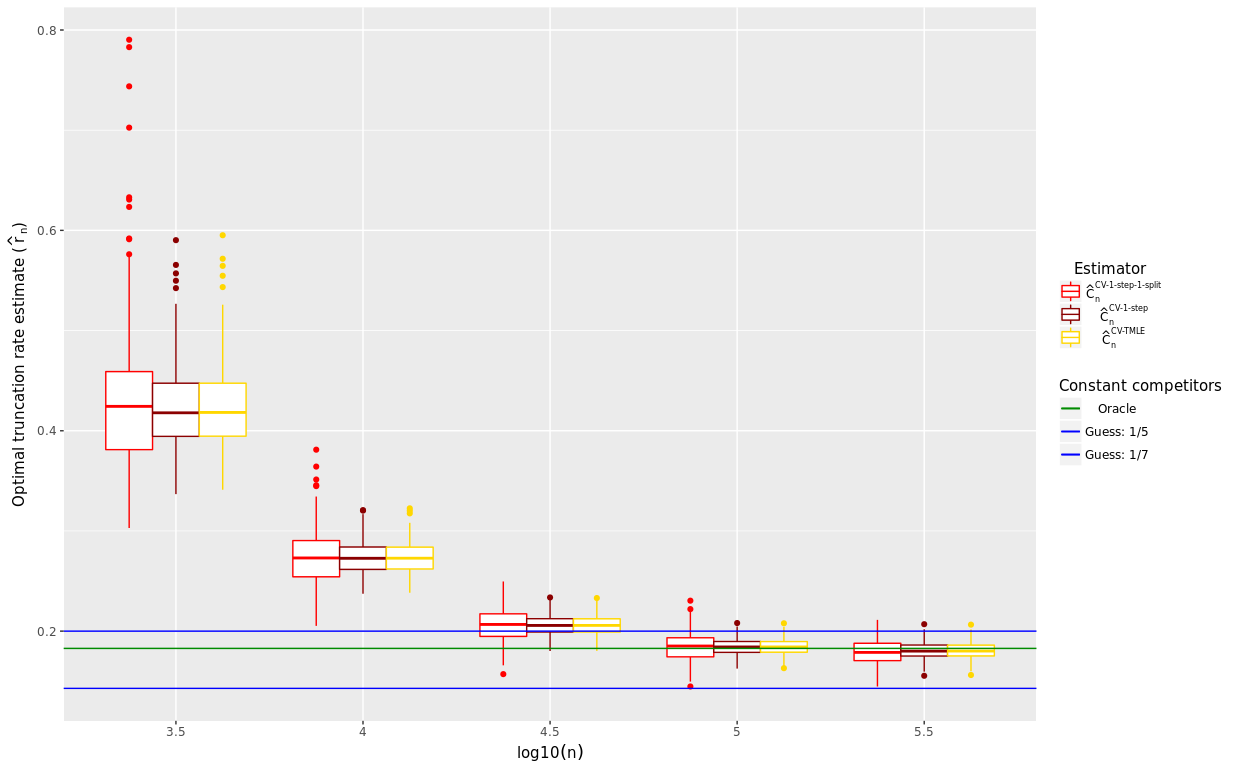}
\end{figure}

\begin{figure}[!htb]
\centering
\includegraphics[width = 14cm, height=9.33cm]{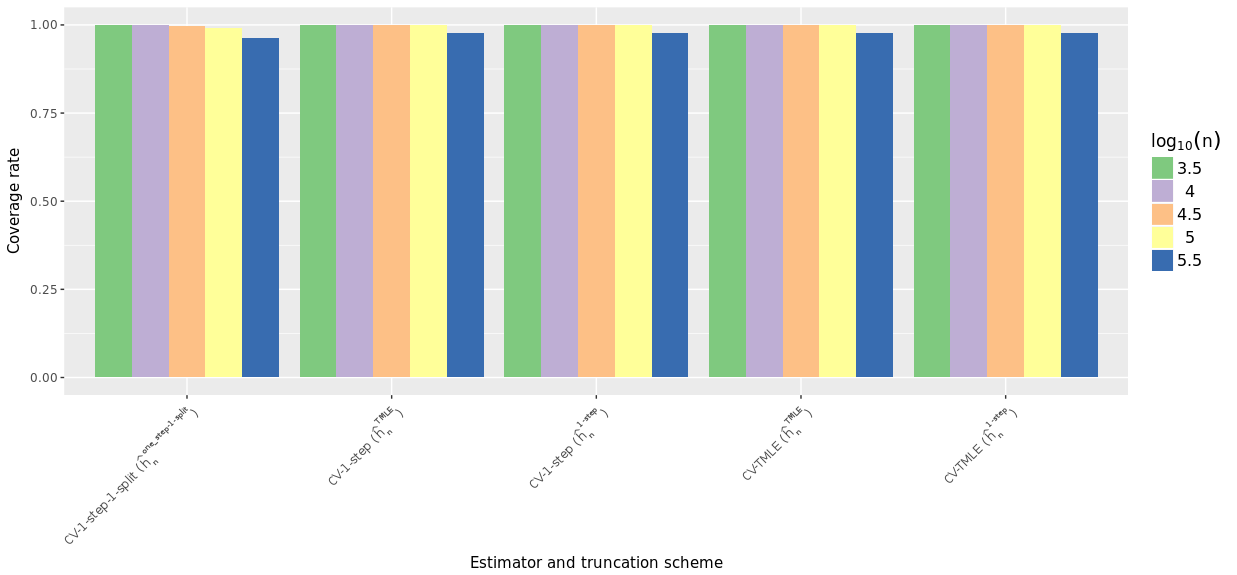}
\end{figure}

We expect that the procedure presented in \cite{edwardKennedy2016} would have performed on par with ours on this specific example. Indeed, their procedure finds the smoothing rate that is optimal in terms in mean integrated squared error with respect to the dose response curve (i.e. the integral w.r.t. $a$ of the squared difference between the estimated  curve and the true curve). Since in this example, the curve is at least twice differentiable everywhere, their work shows that their estimated smoothing rate when using a Gaussian kernel is asymptotically $n^{-1/5}$. However their results do no guarantee their procedure is optimal if the smoothness of the curve varies with $a$. In the case that it is not differentiable only at $a_0$, we expect that their procedure would have used a smoothing rate close to $n^{-1/5}$ as it would be dictated mostly by the smoothness rest of the curve.

This motivate us to perform simulations in a case where $a \mapsto \Psi_a(P_0)$ is not differentiable at $a_0$. We consider a data-generating distribution that implies a cusp in the curve at $a_0$. We obtain this distribution from the one specified in the previous example, by replacing $\mu(A, L)$ by

\begin{gather}
\mu(L, A) \sim expit(1 + 0.2 L_1 + 0.2 L_2 + 0.3 L_3 - 0.1 L_4 +\\
 20 A (- 0.1 L_1 + 0.1 L_3 -0.13^2 (20 A)^2) + 5 \times \text{cusp}(A)),
\end{gather}
where $\text{cusp}(a) = I(a \leq 0.15) a + I(a > 0.15) (0.15 - 2 (a - 0.15))$.

We present plots of the mean squared error (w.r.t. $\Psi(P_0)$) against sample size, and of coverage rates.

\begin{figure}[!htb]
\centering
\includegraphics[width = 14cm, height=9.33cm]{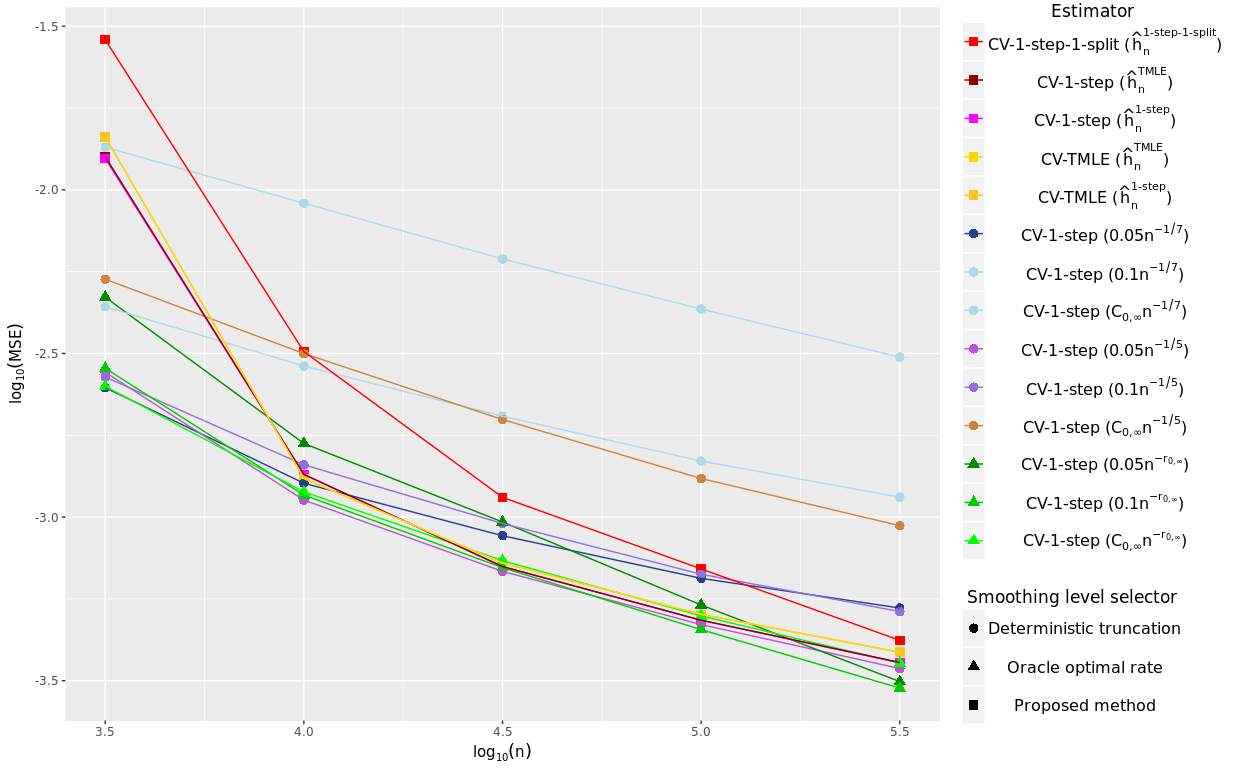}
\end{figure}

\begin{figure}[!htb]
\centering
\includegraphics[width = 14cm, height=9.33cm]{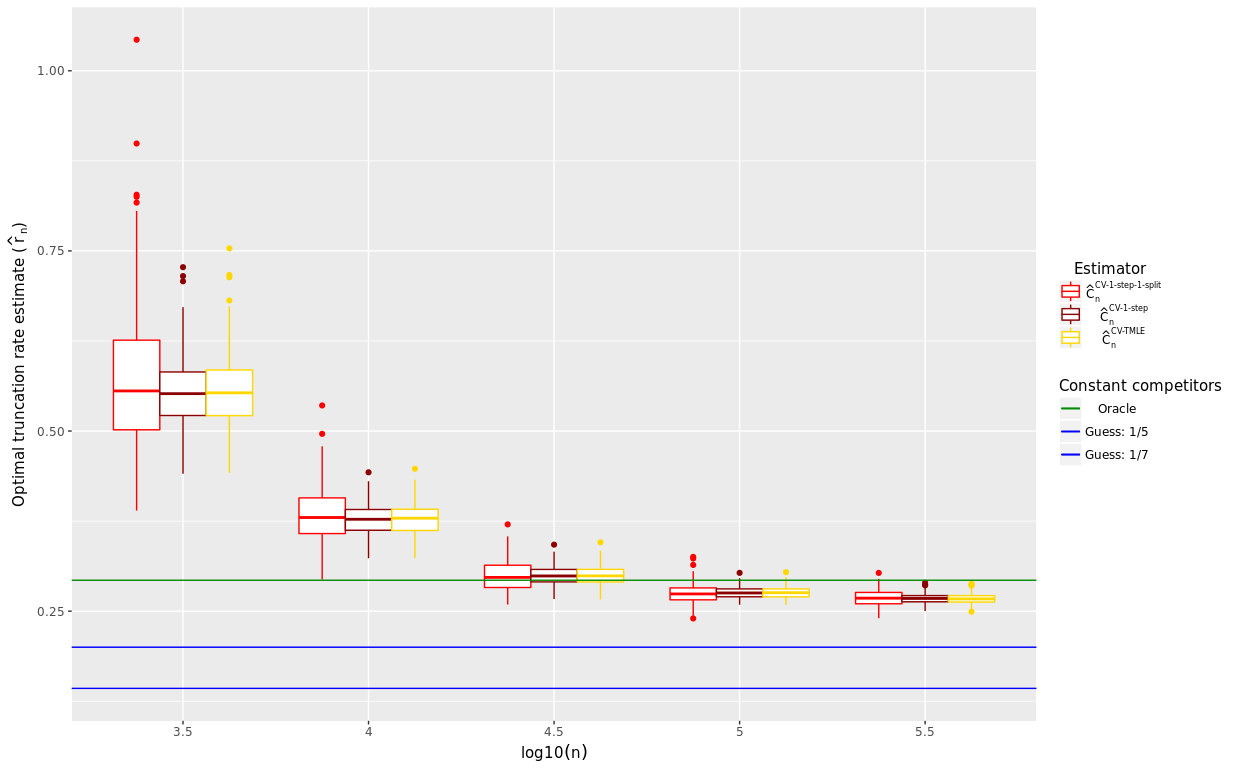}
\includegraphics[width = 14cm, height=9.33cm]{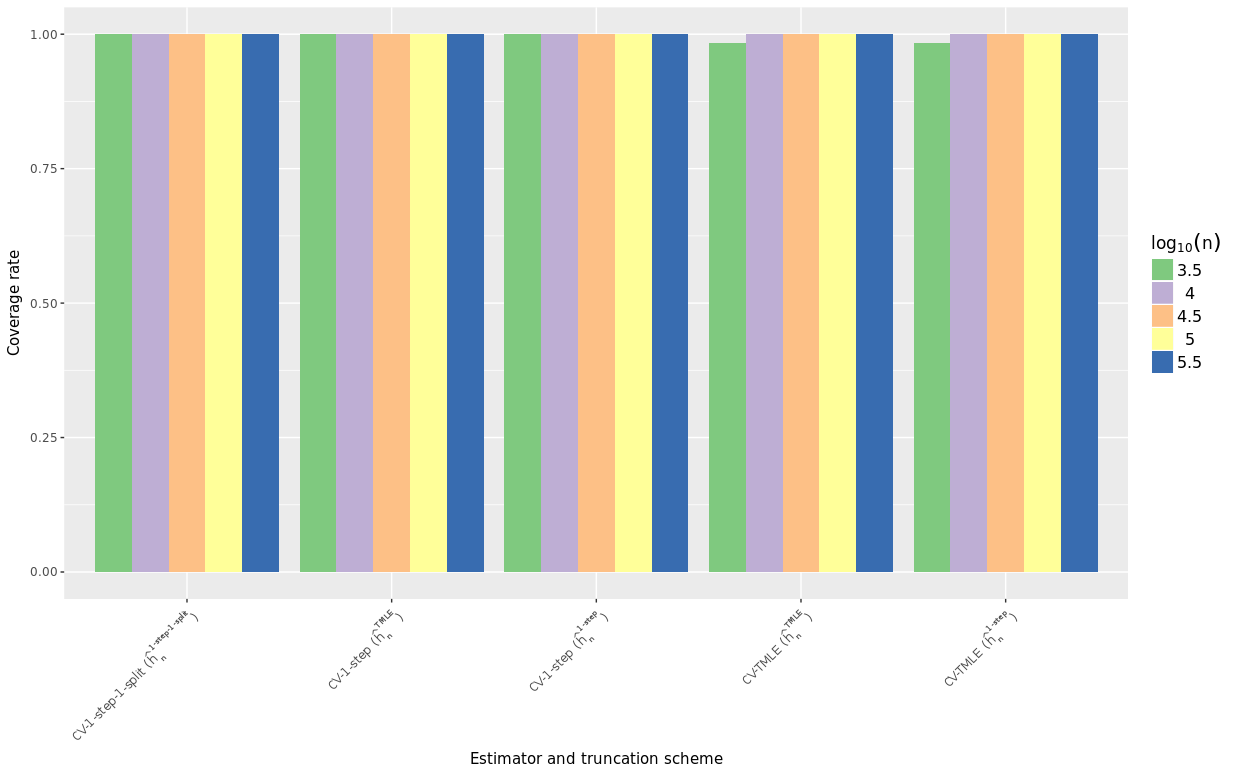}
\end{figure}

As can be observed in the above plots, in both examples, our procedure outperforms deterministic (but informed) choices of the smoothing level.

\FloatBarrier

\section{Discussion}

We have presented a general method to select the optimal smoothing level $\hat{\delta}_n$ in a variety of non-parametric inference problems. Under some assumptions, our one-step estimator at $\hat{\delta}_n$ is asymptotically normally distributed and we show how to construct confidence intervals. Under some additional assumptions, it is rate-optimal among the class of estimators of the form $\widehat{\Psi}_n(\delta_n)$, where $\widehat{\Psi}_n(\delta)$ is an asymptotically efficient, double robust estimator of $\Psi_\delta(P_0)$.

We have shown that using widely available, off-the-shelf, initial estimators of $P_0$ make our assumptions hold in the three concrete examples we 
considered. Simulations demonstrated the practical performance of our method in the dose response curve example.

We concede, however, that example 2 (estimation of $EY_1$ knowing the treatment mechanism) is likely of little utility in practice. Indeed, the only practical situation we have in mind where the treatment meachanism is known is when it was set by the researcher in advance. There should be no point in setting so small that we would have positivity issues. The standard practice is on the contrary to balance treatment and control groups, i.e. to perform a randomized controlled trial. This is why we dedicate a forthcoming article to the situation where the treatment mechanism is unknown and thus estimate.

Besides, we have not provided much guidance in how to set the slow sequences $\tilde{\delta}_{1,n}$ and $\tilde{\delta}_{2,n}$. Taking them too slow makes the assumptions very likely to hold but impairs finite sample performance. We found that plotting $\log \widehat{b'}_{2,n}(\delta)$ against $\log \delta$ usually reveals a range of values of $\delta$ where the plot is linear. Taking $\tilde{\delta}_{i,n}$, $i = 1,2$ in this range yields very good practical performance. This is actually how we chose $\tilde{\delta}_{i,n}$, $i = 1,2$ in the simulation presented above.

\bibliographystyle{imsart-nameyear}
\bibliography{biblio}

\section{Appendix}

In this appendix we provide proofs of the asymptotic properties of our estimators (cross-validated single-split one-step estimator and CV-TMLE), and proofs that the assumptions of our general theorems are satisfied in our three examples (p.d.f. at a point, mean counterfactual outcome, and causal dose-response curve).

\subsection{Asymptotic analysis of the rate estimators}

\begin{lemma}\label{lemma-diff_sigma_hat-sigma_inf}
Assume \textbf{A1}, \textbf{A3} and \textbf{A4}. Then there exist $k_4 > 0$, $\kappa_4 > 0$ and $r^+ > 0$ such that for any sequence $\tilde{\delta}_n$ that converges to zero slower than $n^{-r^+}$,
\begin{equation}
\widehat{\sigma}_{2,n}^2(\tilde{\delta}_n) - \sigma^2_\infty(\tilde{\delta}_n) = O_P (\tilde{\delta}_n^{-k_4} \left(l_{2,n} - l_{1,n} \right)^{-\kappa_4}).
\end{equation}
\end{lemma}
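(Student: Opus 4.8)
The plan is to isolate the fluctuation of the empirical variance $\widehat{\sigma}^2_{2,n}(\tilde\delta_n)$ around its conditional mean given $S_{1,n}$ from the plug-in bias incurred by evaluating the canonical gradient at $\widehat{P}_{1,n}$ rather than at the limit $P_\infty$. Since $\widehat{P}_{1,n}$ depends only on $S_{1,n}$, conditionally on $S_{1,n}$ the function $D^*_{\tilde\delta_n}(\widehat{P}_{1,n})$ is a fixed element of $L_2(P_0)$ and $P_{2,n}$ is the empirical measure of an i.i.d.\ sample of size $l_{2,n}-l_{1,n}$. Applying the identity $P_{2,n}(X-P_{2,n}X)^2=P_{2,n}(X-c)^2-(P_{2,n}X-c)^2$ with $X=D^*_{\tilde\delta_n}(\widehat{P}_{1,n})$ and $c=P_0 D^*_{\tilde\delta_n}(\widehat{P}_{1,n})$, so that $(X-c)^2=H_{0,\tilde\delta_n}(\widehat{P}_{1,n})$, and subtracting $\sigma^2_\infty(\tilde\delta_n)$, one obtains
\begin{align*}
\widehat{\sigma}^2_{2,n}(\tilde\delta_n)-\sigma^2_\infty(\tilde\delta_n)
&=(P_{2,n}-P_0)H_{0,\tilde\delta_n}(\widehat{P}_{1,n})\\
&\quad+\bigl(\sigma^2_{1,n}(\tilde\delta_n)-\sigma^2_\infty(\tilde\delta_n)\bigr)-\bigl((P_{2,n}-P_0)D^*_{\tilde\delta_n}(\widehat{P}_{1,n})\bigr)^2,
\end{align*}
using $P_0 H_{0,\tilde\delta_n}(\widehat{P}_{1,n})=\sigma^2_{1,n}(\tilde\delta_n)$ and $P_0 H_{0,\tilde\delta_n,\infty}=\sigma^2_\infty(\tilde\delta_n)$. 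It then suffices to bound each of the three summands by a term of the form $\tilde\delta_n^{-k}(l_{2,n}-l_{1,n})^{-\kappa}$ and to take $k_4$ the largest such $k$, $\kappa_4$ the smallest such $\kappa$, and $r^+$ the largest of the thresholds appearing in the invoked assumptions.

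For the first summand I would condition on $S_{1,n}$: it has conditional mean zero and conditional second moment $(l_{2,n}-l_{1,n})^{-1}P_0\bigl(H_{0,\tilde\delta_n}(\widehat{P}_{1,n})-P_0 H_{0,\tilde\delta_n}(\widehat{P}_{1,n})\bigr)^2$, which by \textbf{A4} is $\sim_P(l_{2,n}-l_{1,n})^{-1}P_0\bigl(H_{0,\tilde\delta_n,\infty}-P_0 H_{0,\tilde\delta_n,\infty}\bigr)^2$ and hence, by \textbf{A1}, equivalent to $C_H\tilde\delta_n^{-\eta_{0,\infty}}(l_{2,n}-l_{1,n})^{-1}$. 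A conditional Chebyshev inequality followed by integrating out $S_{1,n}$ then gives $(P_{2,n}-P_0)H_{0,\tilde\delta_n}(\widehat{P}_{1,n})=O_P\bigl(\tilde\delta_n^{-\eta_{0,\infty}/2}(l_{2,n}-l_{1,n})^{-1/2}\bigr)$; alternatively one checks the conditional Lindeberg condition for this triangular array and removes the conditioning by dominated convergence (the conditioning-plus-Lindeberg device alluded to in the proof of Theorem~\ref{asymptotic_normality_thm}), which in addition identifies the limiting constant. The identical conditioning argument applied to $(P_{2,n}-P_0)D^*_{\tilde\delta_n}(\widehat{P}_{1,n})$, whose conditional variance is $(l_{2,n}-l_{1,n})^{-1}\sigma^2_{1,n}(\tilde\delta_n)$, shows it is $O_P\bigl(\sigma_{1,n}(\tilde\delta_n)(l_{2,n}-l_{1,n})^{-1/2}\bigr)$, so the third summand is $O_P\bigl(\sigma^2_{1,n}(\tilde\delta_n)(l_{2,n}-l_{1,n})^{-1}\bigr)$, of strictly smaller order than the first once $\sigma^2_{1,n}(\tilde\delta_n)$ is known to grow at most polynomially in $\tilde\delta_n^{-1}$ (which follows from \textbf{A1} together with the bound of the next paragraph).

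The middle summand $\sigma^2_{1,n}(\tilde\delta_n)-\sigma^2_\infty(\tilde\delta_n)$ is the genuine plug-in bias, and controlling it is the main obstacle. I would factor it as $\bigl(\sigma_{1,n}(\tilde\delta_n)-\sigma_\infty(\tilde\delta_n)\bigr)\bigl(\sigma_{1,n}(\tilde\delta_n)+\sigma_\infty(\tilde\delta_n)\bigr)$; the second factor is $O_P(\tilde\delta_n^{-\gamma_{0,\infty}})$ by \textbf{A1}, and the first factor is exactly the quantity that must be controlled along slowly vanishing sequences. Equivalently, writing $D^*_{\tilde\delta_n}(\widehat{P}_{1,n})=D^*_{\tilde\delta_n,\infty}+g_n$ and expanding the squares gives $\bigl|\sigma^2_{1,n}(\tilde\delta_n)-\sigma^2_\infty(\tilde\delta_n)\bigr|\le 2\sigma_\infty(\tilde\delta_n)\,\|g_n\|_{L_2(P_0)}+\|g_n\|_{L_2(P_0)}^2$, so one needs a rate for $\|g_n\|_{L_2(P_0)}=\|D^*_{\tilde\delta_n}(\widehat{P}_{1,n})-D^*_{\tilde\delta_n,\infty}\|_{L_2(P_0)}$ that is polynomial in $(l_{2,n}-l_{1,n})^{-1}$ and only mildly inflated by $\tilde\delta_n^{-1}$, with a single slow threshold $r^+$ below which this holds — the delicate point being precisely the simultaneous handling of the blow-up $\tilde\delta_n^{-\gamma_{0,\infty}}$ and the estimation error of $\widehat{P}_{1,n}$. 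In the three running examples $R_\delta\equiv0$, so \textbf{A3} is vacuous and this rate is the explicit estimate $|\sigma_{1,n}(\delta)-\sigma_\infty(\delta)|\lesssim \delta^{-1/2}\|\hat{\bar{Q}}_n-\bar{Q}_\infty\|$ of Lemmas~\ref{lemma-preA2-example2} and \ref{lemma-preA2-example3}; in the general statement it is the content of assumption \textbf{A2}. Collecting the three displayed bounds and taking $k_4$ to be the largest power of $\tilde\delta_n^{-1}$ and $\kappa_4$ the smallest power of $(l_{2,n}-l_{1,n})^{-1}$ that appear yields the claim, for every $\tilde\delta_n$ converging to zero slower than $n^{-r^+}$.
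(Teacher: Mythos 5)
Your proof follows the paper's argument essentially step for step: the same Pythagoras identity produces exactly the decomposition
\begin{equation}
\widehat{\sigma}^2_{2,n}(\tilde\delta_n)-\sigma^2_\infty(\tilde\delta_n)
=(P_{2,n}-P_0)H_{0,\tilde\delta_n}(\widehat{P}_{1,n})
+\bigl(\sigma^2_{1,n}(\tilde\delta_n)-\sigma^2_\infty(\tilde\delta_n)\bigr)
-\bigl((P_{2,n}-P_0)D^*_{\tilde\delta_n}(\widehat{P}_{1,n})\bigr)^2
\end{equation}
that the paper uses, and the three summands are handled the same way: conditional CLT/Chebyshev via \textbf{A4} and \textbf{A1} for the $H$ term, the same conditional argument with a squared residual for the third term, and the plug-in bias controlled via \textbf{A2}. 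Two small places where your write-up is actually \emph{more} careful than the paper's: you put $\eta_{0,\infty}/2$ rather than $\eta_{0,\infty}$ in the exponent for the first summand (since \textbf{A1} controls the second moment of $H$, not its standard deviation), and you take $k_4$ as the \emph{largest} exponent of $\tilde\delta_n^{-1}$ appearing, which is the correct direction for $O_P$ bounds as $\tilde\delta_n \to 0$ (the paper writes a $\min$ where a $\max$ is needed, a typo on its end).

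There is, however, a genuine issue that your analysis surfaces and that you should pin down explicitly rather than soft-pedal. The lemma as stated assumes only \textbf{A1}, \textbf{A3}, \textbf{A4}, yet the middle summand $\sigma^2_{1,n}(\tilde\delta_n)-\sigma^2_\infty(\tilde\delta_n)$ cannot be bounded without invoking the rate in \textbf{A2} (the paper's own proof cites \textbf{A2} twice, both here and to justify $\sigma_{1,n}(\tilde\delta_n)\sim_P\sigma_\infty(\tilde\delta_n)$ for the third summand). You notice this --- you write that in the general statement the control of the plug-in bias ``is the content of assumption \textbf{A2}'' --- but you present it as if \textbf{A2} were available. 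Under the hypotheses actually listed in the lemma, the claim does not follow; a cleaner version of your argument would state outright that the lemma as written is under-hypothesized and that \textbf{A2} must be added (or, equivalently, that a structural bound of the type in Lemmas~\ref{lemma-preA2-example2} and \ref{lemma-preA2-example3} must be supplied problem-by-problem when $R_\delta\equiv 0$ makes \textbf{A3} vacuous). With \textbf{A2} added to the hypotheses, your proof is correct and matches the paper's.
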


\begin{proof}
Applying Pythagoras yields
\begin{gather}
P_{2,n} \left(D^*_\delta(\widehat{P}_{1,n}) - P_{2,n} D^*_\delta(\widehat{P}_{1,n})\right)^2 = P_{2,n} \left(D^*_\delta(\widehat{P}_{1,n}) - P_0 D^*_\delta(\widehat{P}_{1,n})\right)^2\\
- \left(P_{2,n} D^*_\delta(\widehat{P}_{1,n}) - P_0 D^*_\delta(\widehat{P}_{1,n}) \right)^2.
\end{gather}

Therefore, recalling the definitions of $H_{0, \tilde{\delta}_n}(\widehat{P}_{1,n})$, $H_{0, \infty, \tilde{\delta}_n}$, $\sigma_n(\tilde{\delta}_n)$, and $\sigma_\infty(\tilde{\delta}_n)$,
\begin{gather}
\widehat{\sigma}_{2,n}^2(\delta) - \sigma^2_\infty(\delta) = \left(P_{2,n} - P_0\right) H_{0, \delta}(\widehat{P}_{1,n}) + \sigma^2_{1,n}(\tilde{\delta}_n) - \sigma^2_\infty(\tilde{\delta}_n) \\
- \left(\left(P_{2,n} - P_0\right) D^*_\delta(\widehat{P}_{1,n})\right)^2.\label{diff_sigma_hat_sigma_inf}
\end{gather}

From assumption \textbf{A2}, for $\tilde{\delta}_n$ slow enough, $\sigma_n(\tilde{\delta}_n) \sim_P \sigma_\infty(\tilde{\delta}_n)$. Therefore, applying the central limit theorem for triangular arrays yields that
\begin{gather}
\left(P_{2,n} - P_0\right) D^*_\delta(\widehat{P}_{1,n}) = O_P (\sigma_\infty(\tilde{\delta}_n) n^{-1/2}) \\
= O_P (\tilde{\delta}_n^{-k_1} \left(l_{2,n} - l_{1,n} \right)^{-1/2} ).\label{emp_process_in_D-sigma_hat-proof}
\end{gather}

From assumption \textbf{A4}, for $\tilde{\delta}_n$ slow enough, $P_0 (H_{0, \tilde{\delta}_n}(\widehat{P}_{1,n}) - P_0H_{0, \tilde{\delta}_n}(\widehat{P}_{1,n}))^2 \allowbreak  \sim_P P_0 (H_{0, \tilde{\delta}_n, \infty} - P_0 H_{0, \tilde{\delta}_n, \infty})^2$. Therefore, normalizing the first empirical process term in \eqref{diff_sigma_hat_sigma_inf} by $(P_0 (H_{0, \tilde{\delta}_n}(\widehat{P}_{1,n}) - P_0 H_{0, \tilde{\delta}_n}(\widehat{P}_{1,n}))^2)^{1/2}$, applying the central limit theorem for triangular arrays, and then using assumption \textbf{A1} yields
\begin{gather}
\left(P_{2,n} - P_0\right) H_{0, \tilde{\delta}_n} (\widehat{P}_{1,n}) = O_P(\tilde{\delta}_n^{-\eta_{0, \infty}} (l_{2,n} - l_{1,n})^{-1/2}). \label{emp_process_in_H-sigma_hat-proof}
\end{gather}

Finally, recall that assumption \textbf{A2} states that, for $\tilde{\delta}_n$ slow enough,
\begin{gather}
\sigma_\infty^2(\tilde{\delta}_n) - \sigma_{1,n}^2(\tilde{\delta}_n) = O_P(\tilde{\delta}^{-k_2} \left(l_{2,n} - l_{1,n} \right)^{-\kappa_2} ). \label{A2-recalled}
\end{gather}

Therefore, injecting \eqref{emp_process_in_D-sigma_hat-proof}, \eqref{emp_process_in_H-sigma_hat-proof}, and \eqref{A2-recalled} in \eqref{diff_sigma_hat_sigma_inf} yields
\begin{equation}
\hat{\sigma}_{2,n}^2(\tilde{\delta}_n) - \sigma_\infty^2(\tilde{\delta}_n) = O_P (\tilde{\delta}_n^{-k_4} \left(l_{2,n} - l_{1,n} \right)^{-\kappa_4}),
\end{equation}

with $k_4 \equiv \min(k_1, k_2, \eta_{0, \infty})$ and $\kappa_4 \equiv \min(1/2, \kappa_2)$.
\end{proof}

\medskip

\begin{lemma}\label{lemma-diff_b_prime_hat-b_prime} Assume \textbf{A1} through \textbf{A5}. Then there exists $k_5 > 0$, $\kappa_5 > 0$, and $r^+ > 0$ such that, for any sequence $\tilde{\delta}_n$ that converges to zero slower than $n^{-r^+}$,
\begin{equation}
\widehat{b'}_{2,n}(\tilde{\delta}_n) - b'_0(\tilde{\delta}_n) = O_P (\tilde{\delta}_n^{-k_5} \left(l_{2,n} - l_{1,n} \right)^{-\kappa_5} ).
\end{equation}

\end{lemma}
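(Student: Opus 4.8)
The plan is to follow the template of the proof of Lemma~\ref{lemma-diff_sigma_hat-sigma_inf}, exploiting the clean first-order expansion of the cross-validated one-step estimator. Substituting the expansion \eqref{first_order_expansion-smoothed} for $\widehat{P}_{1,n}$ into the definition of $\widehat{\Psi}_{2,n}(\delta)$ gives
\begin{equation}
\widehat{\Psi}_{2,n}(\delta) - \Psi_\delta(P_0) = \left(P_{2,n} - P_0\right) D^*_\delta(\widehat{P}_{1,n}) + R_\delta(\widehat{P}_{1,n}, P_0).
\end{equation}
Forming the increment of this identity between $\tilde{\delta}_n$ and $\tilde{\delta}_n + \Delta_n$, dividing by $\Delta_n$, using $b_0(\delta) = \Psi_\delta(P_0) - \Psi(P_0)$, and subtracting $b_0'(\tilde{\delta}_n)$, I obtain $\widehat{b'}_{2,n}(\tilde{\delta}_n, \Delta_n) - b_0'(\tilde{\delta}_n) = T_1 + T_2 + T_3$, where $T_1 \equiv \big(b_0(\tilde{\delta}_n + \Delta_n) - b_0(\tilde{\delta}_n)\big)/\Delta_n - b_0'(\tilde{\delta}_n)$ is a finite-difference/derivative discrepancy, $T_2 \equiv \Delta_n^{-1}(P_{2,n} - P_0)\big(D^*_{\tilde{\delta}_n + \Delta_n}(\widehat{P}_{1,n}) - D^*_{\tilde{\delta}_n}(\widehat{P}_{1,n})\big)$ is an empirical-process difference, and $T_3 \equiv \Delta_n^{-1}\big(R_{\tilde{\delta}_n + \Delta_n}(\widehat{P}_{1,n}, P_0) - R_{\tilde{\delta}_n}(\widehat{P}_{1,n}, P_0)\big)$ is a remainder difference. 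Throughout I restrict to $\tilde{\delta}_n$ vanishing slowly enough (slower than some $n^{-r^+}$, and in particular slower than $\Delta_n = (l_{2,n} - l_{1,n})^{-1/4}$, so that $\tilde{\delta}_n + \Delta_n \asymp \tilde{\delta}_n$).

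For $T_1$, Taylor's theorem with Lagrange remainder together with the bound $b_0''(\delta) = O_P(\delta^{-k_0})$ from \textbf{A1} give $T_1 = O_P(\Delta_n \tilde{\delta}_n^{-k_0}) = O_P(\tilde{\delta}_n^{-k_0}(l_{2,n} - l_{1,n})^{-1/4})$. For $T_2$, I would condition on $S_{1,n}$ and bound the conditional second moment by Chebyshev (or invoke the triangular-array central limit theorem exactly as in Lemma~\ref{lemma-diff_sigma_hat-sigma_inf}), so that the numerator is $O_P\big((l_{2,n} - l_{1,n})^{-1/2}\,\|D^*_{\tilde{\delta}_n + \Delta_n}(\widehat{P}_{1,n}) - D^*_{\tilde{\delta}_n}(\widehat{P}_{1,n})\|_{L_2(P_0)}\big)$. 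A triangle inequality on the $P_0$-centered gradients bounds that $L_2(P_0)$ norm by $\sigma_{1,n}(\tilde{\delta}_n + \Delta_n) + \sigma_{1,n}(\tilde{\delta}_n)$, which by \textbf{A2} (for $\tilde{\delta}_n$ slow enough) and then \textbf{A1} is $O_P(\tilde{\delta}_n^{-\gamma_{0,\infty}})$; dividing by $\Delta_n$ yields $T_2 = O_P(\tilde{\delta}_n^{-\gamma_{0,\infty}}(l_{2,n} - l_{1,n})^{-1/4})$. A sharper bound — writing the gradient increment as $\int_{\tilde{\delta}_n}^{\tilde{\delta}_n + \Delta_n}\partial_\delta D^*_\delta(\widehat{P}_{1,n})\,d\delta$ and using the control of $\sigma'_{1,n}$ in \textbf{A2} — would improve the exponent, but is not needed here.

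For $T_3$, I would apply the mean value theorem to write $T_3 = \partial_\delta R_\delta(\widehat{P}_{1,n}, P_0)\big|_{\delta = \xi_n}$ for some (possibly data-dependent) $\xi_n \in [\tilde{\delta}_n, \tilde{\delta}_n + \Delta_n]$; since $\xi_n$ vanishes no faster than $\tilde{\delta}_n$, assumption \textbf{A5} — in the form uniform over the shrinking window $[\tilde{\delta}_n, \tilde{\delta}_n + \Delta_n]$, obtained by the same argument applied to the endpoints — gives $T_3 = O_P(\tilde{\delta}_n^{-k_3}(l_{2,n} - l_{1,n})^{-\kappa_3})$. Collecting the three bounds, the lemma holds with $k_5 \equiv \max(k_0, \gamma_{0,\infty}, k_3)$, $\kappa_5 \equiv \min(1/4, \kappa_3)$, and $r^+$ taken large enough that \textbf{A1}--\textbf{A5} apply to $\tilde{\delta}_n$ and throughout the window $[\tilde{\delta}_n, \tilde{\delta}_n + \Delta_n]$.

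The main obstacle is $T_3$: controlling the derivative of the second-order remainder uniformly over a shrinking, possibly data-dependent window is precisely what assumption \textbf{A5} is tailored for, and one must be careful that $\tilde{\delta}_n$ is chosen slow enough — in particular slower than $\Delta_n = (l_{2,n} - l_{1,n})^{-1/4}$ — for all of \textbf{A1}--\textbf{A5} to be available on the whole window. In the linear examples of Section~4, where $R_\delta \equiv 0$, the term $T_3$ vanishes identically and only $T_1$ and $T_2$ survive, which is why those examples require no assumption beyond \textbf{A1}--\textbf{A2}.
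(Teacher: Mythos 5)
Your decomposition into the bias, empirical-process, and remainder increments and the bounds you apply to each ($b_0''$-Taylor control from \textbf{A1}, conditional triangular-array CLT with the $\sigma_{1,n}\sim\sigma_\infty$ equivalence from \textbf{A2}, and the mean-value theorem plus \textbf{A5}) reproduce the paper's proof step for step, including the final rates $\kappa_5=\min(\kappa_3,1/4)$. Note that your choice $k_5=\max(k_0,\gamma_{0,\infty},k_3)$ is in fact the correct one (since $\tilde\delta_n\to 0$, the largest negative power dominates), whereas the paper writes $\min$ — a slip that also appears in the companion lemmas.
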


\begin{proof}
Observe that
\begin{gather}
\widehat{b'}_n(\tilde{\delta}_n) - b'_0(\tilde{\delta}_n) = \Delta_n^{-1} \big\{ \Psi_{\tilde{\delta}_n + \Delta_n}(P_0) - \Psi_{\tilde{\delta}_n}(P_0) \\
+ \left(P_{2,n} - P_0 \right) \left(D^*_{\tilde{\delta}_n + \Delta_n} (\widehat{P}_{1,n}) - D^*_{\tilde{\delta}_n}(\widehat{P}_{1,n})\right)\\
+ R_{\tilde{\delta}_n + \Delta_n}(\widehat{P}_{1,n}, P_0) -  R_{\tilde{\delta}_n}(\widehat{P}_{1,n}, P_0) \big\}\\
= b'_0(\bar{\delta}_{1,n}) - b'_0(\tilde{\delta}_n) + \left(P_{2,n} - P_0\right) \frac{D^*_{\tilde{\delta}_n + \Delta_n} (\widehat{P}_{1,n}) - D^*_{\tilde{\delta}_n}(\widehat{P}_{1,n})}{\Delta_n} \\
+ \frac{\partial R_\delta(\widehat{P}_{1,n}, P_0)}{\partial \delta}\bigg|_{\delta = \bar{\delta}_{2,n}},\label{main_difference-lemma_b_hat}
\end{gather}

where $\bar{\delta}_{1,n}, \bar{\delta}_{2,n} \in \left[\tilde{\delta}_n, \tilde{\delta}_n + \Delta_n \right]$.

As seen in the proof of lemma \ref{lemma-diff_sigma_hat-sigma_inf}, $\left(P_{2,n} - P_0\right) D^*_{\tilde{\delta}_n}(\widehat{P}_{1,n}) = O_P(\tilde{\delta}_n^{-\gamma_{0, \infty}} (l_{2,n} - l_{1,n} )^{-1/2} )$. Similarly $\left(P_{2,n} - P_0\right) D^*_{\tilde{\delta}_n + \Delta_n}(\widehat{P}_{1,n}) = O_P(\tilde{\delta}_n^{-\gamma_{0, \infty}}(l_{2,n} - l_{1,n} )^{-1/2})$. Therefore
\begin{equation}\label{finite_diff_emp_process-proof_b'_hat}
\left(P_{2,n} - P_0\right) \frac{D^*_{\tilde{\delta}_n + \Delta_n} (\widehat{P}_{1,n}) - D^*_{\tilde{\delta}_n}(\widehat{P}_{1,n})}{\Delta_n} = O_P(\tilde{\delta}_n^{-\gamma_{0, \infty}} (l_{2,n} - l_{1,n})^{-\frac{1}{4}} ).
\end{equation}

Besides $|b'_0(\bar{\delta}_{1,n}) -b'_0(\tilde{\delta}_n)| \leq \Delta_n b''_0(\bar{\delta}_{3,n})$, for some $\bar{\delta}_{3,n} \in \left[\tilde{\delta}_n, \bar{\delta}_{1,n}\right]$. Thus, from assumption \textbf{A1},
\begin{equation}
b'_0(\bar{\delta}_{1,n}) -b'_0(\tilde{\delta}_n) = O_P((l_{2,n} - l_{1,n})^{-\frac{1}{4}} \tilde{\delta}_n^{-k_0}).\label{b'0_diff}
\end{equation}

Finally, from assumption \textbf{A5}, $(\partial R_{\delta}(\hat{P}_{1,n}, P_0 ) /\partial \delta ) |_{\delta = \bar{\delta}_{1,n}} = O_P(\tilde{\delta}_n^{-k_3}(l_{2,n} - l_{1,n} )^{-\kappa_3}).$ Injecting this latter equation, \eqref{b'0_diff} and \eqref{finite_diff_emp_process-proof_b'_hat} into \eqref{main_difference-lemma_b_hat}, we obtain
\begin{gather}
\widehat{b'}_n(\tilde{\delta}_n) - b'_0(\tilde{\delta}_n) = O_P ( \tilde{\delta}_n^{-k_5} (l_{2,n} - l_{1,n} )^{-\kappa_5}),
\end{gather}
where $k_5 \equiv \min(k_0, k_3, \gamma_{0, \infty})$ and $\kappa_5 \equiv \min(\kappa_3, 1/4)$.
\end{proof}

\medskip

\begin{lemma}\label{lemma-sigma_prime_hat-sigma_prime}
Assume \textbf{A1} through \textbf{A4}. Then there exist $k_6 > 0$, $\kappa_6$, $r^+ > 0$ such that, for any sequence $\tilde{\delta}_n$ that converges to zero slower than $n^{-r^+}$,
\begin{equation}
\widehat{\sigma'}_{2,n}(\tilde{\delta}_n) - \sigma'_\infty(\tilde{\delta}_n) = O_P (\tilde{\delta}_n^{-k_6}\left(l_{2,n} - l_{1,n}\right)^{-\kappa_6}).
\end{equation}
\end{lemma}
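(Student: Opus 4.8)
The plan is to follow the template of the proof of Lemma~\ref{lemma-diff_b_prime_hat-b_prime}, but to route the comparison through the $S_{1,n}$-conditional quantity $\sigma_{1,n}$ rather than directly through $\sigma_\infty$. Concretely, I would write
\begin{gather}
\widehat{\sigma'}_{2,n}(\tilde\delta_n,\Delta_n) - \sigma'_\infty(\tilde\delta_n) = \frac{\big(\widehat{\sigma}_{2,n}(\tilde\delta_n+\Delta_n) - \sigma_{1,n}(\tilde\delta_n+\Delta_n)\big) - \big(\widehat{\sigma}_{2,n}(\tilde\delta_n) - \sigma_{1,n}(\tilde\delta_n)\big)}{\Delta_n} \\
+ \left(\frac{\sigma_{1,n}(\tilde\delta_n+\Delta_n) - \sigma_{1,n}(\tilde\delta_n)}{\Delta_n} - \sigma'_{1,n}(\tilde\delta_n)\right) + \big(\sigma'_{1,n}(\tilde\delta_n) - \sigma'_\infty(\tilde\delta_n)\big),
\end{gather}
and bound the three terms separately; call them $(\mathrm{I})$, $(\mathrm{II})$, $(\mathrm{III})$. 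The last term $(\mathrm{III})$ is exactly the second display of assumption \textbf{A2}, hence $O_P(\tilde\delta_n^{-k_2'}(l_{2,n}-l_{1,n})^{-\kappa_2'})$. Term $(\mathrm{II})$ is deterministic given $S_{1,n}$: by Taylor's theorem it equals $\tfrac12\Delta_n\,\sigma''_{1,n}(\bar\delta_n)$ for some $\bar\delta_n\in[\tilde\delta_n,\tilde\delta_n+\Delta_n]$, and for $\tilde\delta_n$ slow enough that $\Delta_n=o(\tilde\delta_n)$ one has $\bar\delta_n\asymp\tilde\delta_n$, so the third line of \textbf{A2} together with the $\sigma''_\infty(\delta)=O_P(\delta^{-k_1})$ bound of \textbf{A1} gives $(\mathrm{II}) = O_P(\Delta_n\tilde\delta_n^{-k_1}) = O_P(\tilde\delta_n^{-k_1}(l_{2,n}-l_{1,n})^{-1/4})$.

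Term $(\mathrm{I})$ carries the real content. Here I would reuse the Pythagoras step from the proof of Lemma~\ref{lemma-diff_sigma_hat-sigma_inf}, but comparing $\widehat\sigma^2_{2,n}$ to $\sigma^2_{1,n}$ rather than to $\sigma^2_\infty$, which gives
\begin{equation}
\widehat{\sigma}^2_{2,n}(\delta) - \sigma^2_{1,n}(\delta) = (P_{2,n}-P_0)H_{0,\delta}(\widehat{P}_{1,n}) - \big((P_{2,n}-P_0)D^*_\delta(\widehat{P}_{1,n})\big)^2,
\end{equation}
a purely $S_{2,n}$-driven fluctuation. Dividing by $\widehat{\sigma}_{2,n}(\delta)+\sigma_{1,n}(\delta)$, which is bounded below by $\sigma_{1,n}(\delta)\sim_P\sigma_\infty(\delta)\asymp\delta^{-\gamma_{0,\infty}}$ (from \textbf{A1} and the first line of \textbf{A2} for $\tilde\delta_n$ slow), and invoking the empirical-process bounds \eqref{emp_process_in_D-sigma_hat-proof} and \eqref{emp_process_in_H-sigma_hat-proof} already proved there (which apply equally at $\tilde\delta_n+\Delta_n$, itself a slow sequence), yields $|\widehat{\sigma}_{2,n}(\tilde\delta_n)-\sigma_{1,n}(\tilde\delta_n)|$ of order $\tilde\delta_n^{\gamma_{0,\infty}-\eta_{0,\infty}}(l_{2,n}-l_{1,n})^{-1/2}$ plus $\tilde\delta_n^{-\gamma_{0,\infty}}(l_{2,n}-l_{1,n})^{-1}$, and likewise at $\tilde\delta_n+\Delta_n$. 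Bounding the numerator of $(\mathrm{I})$ by the sum of these and dividing by $\Delta_n=(l_{2,n}-l_{1,n})^{-1/4}$ gives $(\mathrm{I}) = O_P(\tilde\delta_n^{\gamma_{0,\infty}-\eta_{0,\infty}}(l_{2,n}-l_{1,n})^{-1/4}) + O_P(\tilde\delta_n^{-\gamma_{0,\infty}}(l_{2,n}-l_{1,n})^{-3/4})$. Collecting $(\mathrm{I})$, $(\mathrm{II})$, $(\mathrm{III})$ then gives the claim, e.g.\ with $\kappa_6 = \min(1/4,\kappa_2')$ and $k_6$ a large enough positive constant dominating $k_1$, $k_2'$, $\eta_{0,\infty}-\gamma_{0,\infty}$ and $\gamma_{0,\infty}$, valid for every $\tilde\delta_n$ converging to zero slower than some fixed $n^{-r^+}$ chosen so that all the $\sim_P$, $\asymp$ and $o(\cdot)$ reductions above hold.

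The step I expect to be the main obstacle — and the reason for routing through $\sigma_{1,n}$ — is the interaction between the finite-difference denominator $\Delta_n^{-1}=(l_{2,n}-l_{1,n})^{1/4}$ and the available convergence rates. A naive decomposition through $\sigma_\infty$ would force one to finite-difference $\widehat\sigma^2_{2,n}-\sigma^2_\infty$, whose rate from Lemma~\ref{lemma-diff_sigma_hat-sigma_inf} is only $(l_{2,n}-l_{1,n})^{-\kappa_4}$ with $\kappa_4=\min(1/2,\kappa_2)$, and this need not survive division by $\Delta_n$ when $\kappa_2\le 1/4$. The split above avoids ever differencing the slowly-converging $\sigma_{1,n}$–$\sigma_\infty$ discrepancy — \textbf{A2} hands us its derivative analogue directly in $(\mathrm{III})$ — so that only genuine $S_{2,n}$ fluctuations, of order $(l_{2,n}-l_{1,n})^{-1/2}$ and, in squared form, $(l_{2,n}-l_{1,n})^{-1}$, must beat $\Delta_n^{-1}$, which they comfortably do. A secondary nuisance is the bookkeeping on the $\tilde\delta_n$-powers, needed to certify that ``slow enough'' can be packaged into a single explicit exponent $r^+$.
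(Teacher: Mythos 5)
Your proof is correct and follows essentially the same route as the paper's: the same three-term decomposition through the intermediate $S_{1,n}$-conditional quantity $\sigma_{1,n}$, the same Pythagoras-plus-empirical-process bound for the term $\widehat\sigma_{2,n}-\sigma_{1,n}$, the same Taylor-remainder step via $\sigma_{1,n}''\sim\sigma_\infty''=O_P(\delta^{-k_1})$ for the finite-difference error, and \textbf{A2} directly for $\sigma'_{1,n}-\sigma'_\infty$. Your closing discussion of why the decomposition must pass through $\sigma_{1,n}$ rather than $\sigma_\infty$ correctly identifies the reason the paper's proof is structured this way, though the paper leaves that motivation implicit.
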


\begin{proof}
Observe that
\begin{gather}
\widehat{\sigma'}_{2,n}(\tilde{\delta}_n) - \sigma'_\infty(\tilde{\delta}_n) \\
=\Delta_n^{-1} \big\{ \widehat{\sigma}_{2,n}(\tilde{\delta}_n + \Delta_n) - \sigma_{1,n}(\tilde{\delta}_n) - (\widehat{\sigma}_{2,n}(\tilde{\delta}_n + \Delta_n) - \sigma_{1,n}(\tilde{\delta}_n) )\big\}
\\
+ \frac{\sigma_{1,n}\tilde{\delta}_n + \Delta_n) - \sigma_{1,n}(\tilde{\delta}_n)}{\Delta_n} - \sigma_{1,n}'(\tilde{\delta}_n) + \sigma_{1,n}'(\tilde{\delta}_n) - \sigma_\infty'(\tilde{\delta}_n). \label{main_diff-lemma_sigma_prime_hat}
\end{gather}

We have that
\begin{gather}
\widehat{\sigma}_{2,n}(\tilde{\delta}_n) - \sigma_{1,n}(\tilde{\delta}_n) = \frac{\widehat{\sigma}^2_{2,n}(\tilde{\delta}_n) - \sigma_{1,n}^2(\tilde{\delta}_n)}{\widehat{\sigma}_{2,n}(\tilde{\delta}_n) + \sigma_{1,n}(\tilde{\delta}_n)}.
\end{gather}

For $\tilde{\delta}_n$ slow enough, we have that $\widehat{\sigma}_{2,n}(\tilde{\delta}_n) \sim_P \sigma_\infty(\tilde{\delta}_n)$ and $\sigma_{1,n}(\tilde{\delta}_n) \sim_P \sigma_\infty(\tilde{\delta}_n)$. Therefore, recalling from the proof of lemma \ref{lemma-diff_sigma_hat-sigma_inf} that $\widehat{\sigma}^2_{2,n}(\tilde{\delta}_n) - \sigma_{1,n}^2(\tilde{\delta}_n) = O_P (\tilde{\delta}_n^{-k_4} (l_{2,n} - l_{1,n})^{-1/2})$, we have that
\begin{equation}
\widehat{\sigma}_{2,n}(\tilde{\delta}_n) - \sigma_{1,n}(\tilde{\delta}_n) \sim_P \frac{\widehat{\sigma}^2_{2,n}(\tilde{\delta}_n) - \sigma_\infty^2(\tilde{\delta}_n)}{2\sigma_\infty(\tilde{\delta}_n)} = O_P(\tilde{\delta}_n^{\gamma_{0, \infty} - k_2} (l_{2,n} - l_{1,n})^{-1/2}).\label{finite_diff_emp_process-lemma_sigma_prime_hat}
\end{equation}
Similarly, 
\begin{gather}
\widehat{\sigma}_{2,n}(\tilde{\delta}_n + \Delta_n) - \sigma_{1,n}(\tilde{\delta}_n + \Delta_n) \sim_P \frac{\widehat{\sigma}^2_{2,n}(\tilde{\delta}_n + \Delta_n) - \sigma_\infty^2(\tilde{\delta}_n + \Delta_n)}{2\sigma_\infty(\tilde{\delta}_n + \Delta_n)}\\
 = O_P(\tilde{\delta}_n^{\gamma_{0, \infty} - k_2} (l_{2,n} - l_{1,n})^{-1/2}).\label{finite_diff_emp_process-lemma_sigma_prime_hat-bis}
\end{gather}

Besides, note that using assumption \textbf{A2} and then assumption \textbf{A1}, we have that
\begin{gather}
\frac{\sigma_{1,n}(\tilde{\delta}_n + \Delta_n) - \sigma_{1,n}(\tilde{\delta}_n)}{\Delta_n} - \sigma_{1,n}'(\tilde{\delta}_n) = O_P(\tilde{\delta}_n^{-k_1} \Delta_n).\label{diff_fin-diff-sigma_prime-lemma_sigma_prime_hat}
\end{gather}

Finally, recall that from \textbf{A2} we have $\sigma_{1,n}'(\tilde{\delta}_n) - \sigma'_\infty(\tilde{\delta}_n) = O_P (\tilde{\delta}^{-k'_2} (l_{2,n} - l_{1,n})^{-\kappa'_2})$.
Injecting this latter identity,  \eqref{finite_diff_emp_process-lemma_sigma_prime_hat}, \eqref{finite_diff_emp_process-lemma_sigma_prime_hat-bis} and \eqref{diff_fin-diff-sigma_prime-lemma_sigma_prime_hat} into \eqref{main_diff-lemma_sigma_prime_hat}, we readily obtain that
\begin{gather}
\widehat{\sigma}_{2,n}(\tilde{\delta}_n) - \sigma_\infty(\tilde{\delta}_n) = O_P(\tilde{\delta}_n^{-k_5} \left(l_{2,n} - l_{1,n}\right)^{-1/4}),
\end{gather}
with $k_6 \equiv \min(k_1, k_2 - \gamma_{0, \infty}, k'_2)$ and $\kappa_6 \equiv \min(\kappa'_2, 1/4)$.
\end{proof}

\medskip

\begin{proof}[Proof of lemma \ref{lemma-rate-estimators}]
We prove the claims $\widehat{\beta}_n - \beta_0 = o_P (1 / \log n)$ and $\widehat{C}_{b', n} \xrightarrow{P} C_{b',0}$. The proofs of the remaining claims are identical.

\medskip

Observe that lemma \ref{lemma-diff_b_prime_hat-b_prime} holds under assumptions \textbf{A1} through \textbf{A5}. Let $i \in \{1,2\}$.

Therefore for $\tilde{\delta}_{i,n}$ slow enough,
\begin{gather}
\log \widehat{b'}_{i,n}(\tilde{\delta}_{i,n}) = \log b'_0(\tilde{\delta}_{i,n}) + \log (1 + O_P (\tilde{\delta}_{i,n}^{-k_4}(l_{2,n} - l_{1,n})^{-\kappa_4})).
\end{gather}

Therefore for $\tilde{\delta}_{i,n}$ slow enough, taking a first order Taylor expansion, for some $k_7 > 0$,
\begin{gather}
\log \widehat{b'}_{i,n}(\tilde{\delta}_{i,n}) = \log b'_0(\tilde{\delta}_{i,n}) + O_P (\tilde{\delta}_{i,n}^{-k_7})\label{proof_rates_estimators-Taylor_expansion_of_log}
\end{gather}

Recall that under \textbf{A1} $b_0'(\tilde{\delta}_{i,n}) \sim C_{b',0}\tilde{\delta}_{i,n}^{\beta_0 - 1}$. Thus $\log b'_0(\tilde{\delta}_n) = \log C_{b',0} + (\beta_0 - 1) \log \tilde{\delta}_n + o_P (\log \tilde{\delta}_{i,n} ) = \log C_{b',0} + (\beta_0 - 1) \log \tilde{\delta}_n + o_P (1 / \log n )$, where the last equality follows from the fact that $\tilde{\delta}_{i, n} < n^{-r^-}$ for some $r^- > 0$. Therefore, injecting this and \eqref{proof_rates_estimators-Taylor_expansion_of_log} into the definition of $\widehat{\beta}_n$, we obtain
\begin{gather}
\widehat{\beta}_n - \beta_0 = o_P\left(\frac{1}{\log n}\right).
\end{gather}

Then, we have that
\begin{gather}
\widehat{C}_{b',n} \equiv \widehat{b'}_{2,n}(\tilde{\delta}_{3,n}) \tilde{\delta}_{3,n}^{-(\widehat{\beta}_n - 1)} \sim_P C_{b',0} \tilde{\delta}_{3,n}^{\beta_0 - 1 -(\widehat{\beta}_n - 1)}\\
\sim_P C_{b',0} \tilde{\delta}_{3,n}^{\frac{1}{\log n}} \sim_P C_{b',0} \tilde{\delta}_{3,n}^\frac{1}{\log \tilde{\delta}_{3,n}} \xrightarrow{P} C_{b',0},
\end{gather}

where we have used the fact that $o_P(1/ \log n) = o_P (1 / \log \tilde{\delta}_{3,n})$ since $n^{-r^+} < \tilde{\delta}_{3,n} < n^{-r^-}$ for some $r^+ > 0, r^- > 0$.

\end{proof}

\medskip

\subsection{Asymptotic analysis of the single-split cross-validated one-step estimator}

\subsubsection{Technical lemmas for the asymptotic normality of the single-split cross-validated one-step estimator}

\begin{lemma}\label{link_gamma_nu-lemma}
Assume \textbf{A1}. Then $b_0(\delta) \sim_P C_{b',0}\beta_0^{-1} \delta^{\beta_0}$.
If $\sigma_\infty(\delta) \xrightarrow{\delta \rightarrow 0} \infty$, then $\gamma = -\nu - 1$. If $\sigma_\infty(\delta)$ has a finite limit as $\delta$ converges to zero, then $\gamma_{0, \infty} = 0$ and $\nu_{0, \infty} > -1$.
\end{lemma}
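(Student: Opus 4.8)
The plan is to regard $b_0'$ and $\sigma_\infty'$ as the primitive objects and to recover $b_0$ and $\sigma_\infty$ from them by integration; this is the correct direction, since the pointwise equivalences in \textbf{A1} cannot be differentiated but, when the relevant improper integrals at $0$ are regular, can be integrated. Throughout I use that $b_0$ and $\sigma_\infty$ are $C^2$ on $(0,\delta_0]$ by the second-derivative bounds in \textbf{A1} (so the fundamental theorem of calculus applies), that the constants in \textbf{A1} are nonzero (as the relations ``$\sim$'' require), and the elementary fact that if $f(t)\sim c\,t^{\alpha}$ as $t\downarrow 0$ with $c\neq0$, then for every $\varepsilon>0$ there is a $t_\varepsilon>0$ with $|f(t)-c\,t^\alpha|\le\varepsilon|c|\,t^\alpha$ on $(0,t_\varepsilon]$, which I integrate term by term.

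\emph{The claim on $b_0$.} Since the approximating family is built so that $\Psi_0=\Psi$ and $\Psi_\delta(P_0)\to\Psi(P_0)$ as $\delta\downarrow0$, the function $b_0$ is continuous at $0$ with $b_0(0)=0$. Writing the assumed power law for $b_0'$ as $b_0'(\delta)\sim C_{b',0}\delta^{\beta_0-1}$ (consistently with the companion relation $b_0(\delta)\sim C_b\delta^{\beta_0}$ noted in the text), this integrand is integrable at $0$ for $\beta_0>0$, so $b_0(\delta)=\int_0^\delta b_0'(t)\,dt$. Integrating the $\varepsilon$-sandwich for $b_0'$ over $(0,\delta]$ with $\delta\le t_\varepsilon$ gives $\big|b_0(\delta)-\tfrac{C_{b',0}}{\beta_0}\delta^{\beta_0}\big|\le\tfrac{\varepsilon|C_{b',0}|}{\beta_0}\delta^{\beta_0}$, and letting $\varepsilon\downarrow0$ yields $b_0(\delta)\sim C_{b',0}\beta_0^{-1}\delta^{\beta_0}$ (the ``$\sim_P$'' of the statement being the ordinary $\sim$, as $b_0$ is non-random).

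\emph{The case $\sigma_\infty(\delta)\to\infty$.} Fix a small $\delta_0>0$ and write $\sigma_\infty(\delta)=\sigma_\infty(\delta_0)-\int_\delta^{\delta_0}\sigma_\infty'(t)\,dt$. Because the left-hand side diverges, so does $\int_\delta^{\delta_0}\sigma_\infty'$; since $\sigma_\infty'(t)$ has constant sign near $0$ with $|\sigma_\infty'(t)|\asymp t^{\nu_{0,\infty}}$ (from $\sigma_\infty'(t)\sim C_{\sigma',\infty}t^{\nu_{0,\infty}}$, $C_{\sigma',\infty}\neq0$), this forces $\int_0^{\delta_0} t^{\nu_{0,\infty}}\,dt=\infty$, i.e.\ $\nu_{0,\infty}\le-1$; the borderline value $\nu_{0,\infty}=-1$ is then excluded, because it would make $\sigma_\infty(\delta)$ grow like $|\log\delta|$, which cannot be equivalent to $C_{\sigma,\infty}\delta^{-\gamma_{0,\infty}}$ for any $\gamma_{0,\infty}\ge0$. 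With $\nu_{0,\infty}<-1$, integrating the $\varepsilon$-sandwich for $\sigma_\infty'$ over $(\delta,t_\varepsilon]$ and discarding the bounded tail $\int_{t_\varepsilon}^{\delta_0}\sigma_\infty'$ (negligible next to $\delta^{\nu_{0,\infty}+1}\to\infty$) shows that $\sigma_\infty(\delta)$ has exact order $\delta^{\nu_{0,\infty}+1}$; matching this with $\sigma_\infty(\delta)\sim C_{\sigma,\infty}\delta^{-\gamma_{0,\infty}}$ forces $\gamma_{0,\infty}=-\nu_{0,\infty}-1$ (and pins down $C_{\sigma,\infty}$ in terms of $C_{\sigma',\infty}$ and $\nu_{0,\infty}$).

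\emph{The case $\sigma_\infty(\delta)$ bounded, and the main obstacle.} If $\sigma_\infty(\delta)$ has a finite limit, then $\gamma_{0,\infty}=0$ is immediate: a finite (necessarily nonzero, since $C_{\sigma,\infty}\neq0$) limit is incompatible with $\delta^{-\gamma_{0,\infty}}\to\infty$, which would occur if $\gamma_{0,\infty}>0$. For $\nu_{0,\infty}>-1$, argue by contradiction: if $\nu_{0,\infty}\le-1$ then, exactly as above, $\sigma_\infty'$ has constant sign near $0$ with $|\sigma_\infty'(t)|\asymp t^{\nu_{0,\infty}}$, so $\big|\int_\delta^{\delta_0}\sigma_\infty'(t)\,dt\big|\to\infty$ as $\delta\downarrow0$, contradicting that $\sigma_\infty(\delta)=\sigma_\infty(\delta_0)-\int_\delta^{\delta_0}\sigma_\infty'$ converges; hence $\nu_{0,\infty}>-1$. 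The one genuine difficulty in the whole argument is precisely the transfer of the pointwise equivalences of \textbf{A1} to their antiderivatives; this is legitimate exactly because the limiting integrals are regular --- convergent when the exponent exceeds $-1$, divergent like a pure power when it is below $-1$ --- which is what $\beta_0>0$ and $\nu_{0,\infty}\neq-1$ guarantee. I would carry this out via the explicit $\varepsilon$-sandwich above rather than quoting a Karamata-type theorem, the remainder being bookkeeping that rests on the $C^2$ regularity furnished by \textbf{A1}.
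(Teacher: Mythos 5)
Your proof is correct and follows essentially the same route as the paper's: integrate the power-law asymptotics for $b_0'$ and $\sigma_\infty'$ from \textbf{A1} and match the resulting antiderivative asymptotics against the posited asymptotic of $\sigma_\infty$. You are in fact somewhat more careful than the paper --- you explicitly rule out the borderline case $\nu_{0,\infty}=-1$ via the logarithmic growth argument and justify integrating ``$\sim$'' with the $\varepsilon$-sandwich, whereas the paper's proof integrates the asymptotic equivalence without comment and tacitly assumes $\nu_{0,\infty}\neq -1$ by writing the factor $(\nu+1)^{-1}$ directly.
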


\begin{proof}
Integrating $b'_0(\delta) = C_{b', 0} \delta^{-\beta_0 - 1} + o(\delta^{-\beta_0 - 1})$ and noting that $b_0(0) = 0$ gives that $b_0(\delta) \sim_P C_{b',0}\beta_0^{-1}\delta^{\beta_0}$.

\medskip

Integrating $\sigma_\infty'(\delta) = C_{\sigma', \infty} \delta^{\nu_{0, \infty}} + o_P\left(\delta^{\nu_0}\right)$ yields $\sigma_\infty(\delta) = K +C_{\sigma', \infty}(\nu + 1)^{-1} \delta^{\nu_{0, \infty} + 1} + o_P(\delta^{\nu_{0, \infty} + 1})$, for some constant $K \in \mathbb{R}$.
If $\sigma_\infty(\delta) \xrightarrow{\delta \rightarrow 0} \infty$, then $K + C_{\sigma', \infty}(\nu + 1)^{-1} \delta^{\nu_{0, \infty} + 1} \sim \delta^{-\gamma_{0, \infty}}$ which implies that $\gamma_{0, \infty} = -\nu_{0, \infty} - 1 > 0$.

If $\sigma_\infty(\delta)$ converges to a positive finite constant as $\delta$ converges to zero, then $\sigma_\infty(\delta) \sim C_{\sigma, \infty} \delta^{-\gamma_0}$ implies that $\gamma = 0$. Besides, $\sigma_\infty(\delta) = K + C_{\sigma', \infty}(\nu + 1)^{-1} \delta^{\nu_{0, \infty} + 1} + o_P(\delta^{\nu_{0, \infty} + 1})$ implies that $\nu_{0,\infty} + 1 > 0$.
\end{proof}

\medskip

\subsubsection{Asymptotic optimality of the cross-validated single-split one-step estimator}

\begin{proof}

Observe that
\begin{align}
MSE_n(\delta) =& E_{P_0} \left[\left(\widehat{\Psi}_n(\delta) - \Psi(P_0) \right)^2 \right]\\
& E_{P_0} \left[ E_{P_0} \left[\left(\widehat{\Psi}_n(\delta) - \Psi(P_0) \right)^2 \bigg| S_{2,n} \right]\right].
\end{align}

We have that
\begin{gather}
E_{P_0} \left[\left(\widehat{\Psi}_n(\delta) - \Psi(P_0) \right)^2 \big| S_{2,n} \right] \\
= E_{P_0} \left[ \left( \left(P_{3,n} - P_0\right) D^*_\delta(\widehat{P}_{2,n})\right)^2 \big| S_{2,n}\right]
 + b_0^2(\delta) + R_\delta(\widehat{P}_{2,n}, P_0)^2 \\
+ 2 E_{P_0} \left[ \left(P_{3,n} - P_0\right) D^*_{\delta}(\widehat{P}_{2,n}) \big| S_{2,n} \right] \left(b_0(\delta) + R_\delta(\widehat{P}_{2,n}, P_0)\right)\\
+ 2 R_\delta(\widehat{P}_{2,n}, P_0) b_0(\delta)\\
= \frac{1}{l_{3,n} - l_{2,n}} \sigma^2_{2,n}(\delta) + b_0^2(\delta) + R_\delta(\widehat{P}_{2,n}, P_0)^2 + R_\delta(\widehat{P}_{2,n}, P_0) b_0(\delta). \label{conditional_MSE_developed_expression}
\end{gather}

Therefore
\begin{gather}
MSE_n'(\delta) = \frac{2}{l_{3,n} - l_{2,n}} \sigma_{2,n}(\delta) \sigma'_{2,n}(\delta) + 2 b_0(\delta) b'_0(\delta) \\
+ \frac{d R_\delta (\widehat{P}_{2,n}, P_0)}{d\delta} \left(b_0(\delta) + R_\delta (\widehat{P}_{2,n}, P_0) \right) + b'_0(\delta) R_\delta (\widehat{P}_{2,n}, P_0).
\end{gather}

Under assumptions \textbf{A7} and \textbf{A8}, evaluating the second line of the expression above at some $\delta_n$ with rate in a neighborhood of $r_{0, \infty}$, we have that
\begin{gather}
\frac{d R_\delta (\widehat{P}_{2,n}, P_0)}{d\delta}\big|_{\delta = \delta_n} \left(b_0(\delta) + R_{\delta_n} (\widehat{P}_{2,n}, P_0) \right) + b'_0(\delta_n) R_{\delta_n} (\widehat{P}_{2,n}, P_0)\\
= o_P \left( \left(\frac{1}{\sqrt{n}} \sigma_\infty'(\delta_n) + b_0'(\delta_n) \right) \left(\frac{\sigma_\infty(\delta_n)}{\sqrt{n}} + b_0(\delta_n) \right) \right)\\
= o_P \left(\frac{1}{n} \delta_n^{-\gamma + \nu} + \frac{1}{\sqrt{n}} \delta_n^{-\gamma + \beta - 1} + \frac{1}{\sqrt{n}} \delta_n^{\beta + \nu} + \delta_n^{2 \beta - 1}\right).
\end{gather}

$\bullet$ If $\sigma_\infty(\delta) \xrightarrow{\delta \rightarrow 0} \infty$, then, by lemma \ref{link_gamma_nu-lemma}, $\nu = -\gamma -1$. We then have that 
\begin{gather}
\frac{1}{n} \delta_n^{-\gamma + \nu} + \frac{1}{\sqrt{n}} \delta_n^{-\gamma + \beta - 1} + \frac{1}{\sqrt{n}} \delta_n^{\beta + \nu} + \delta_n^{2 \beta - 1} \\
= \frac{1}{n} \delta_n^{-2\gamma - 1} + 2\frac{1}{\sqrt{n}} \delta_n^{-\gamma + \beta -1} + \delta_n^{2 \beta - 1} \\
\lesssim \left( \frac{1}{\sqrt{n}} \sqrt{\sigma_\infty(\delta_n) \sigma_\infty'(\delta_n)} + \sqrt{b_0(\delta_n) b'_0(\delta_n)} \right)^2 \\
\leq \frac{2}{n}\sigma_\infty(\delta_n) \sigma_\infty'(\delta_n) +  2b_0(\delta_n) b'_0(\delta_n).
\end{gather}

$\bullet$ If $\sigma_\infty(\delta)$ has a finite limit as $\delta$ converges to zero, then, by lemma \ref{link_gamma_nu-lemma}, $\gamma = 0$ and $\nu > - 1$. We then have that
\begin{gather}
\frac{1}{n} \delta_n^{-\gamma + \nu} + \frac{1}{\sqrt{n}} \delta_n^{-\gamma + \beta - 1} + \frac{1}{\sqrt{n}} \delta_n^{\beta + \nu} + \delta_n^{2 \beta - 1} \\
= \frac{1}{n} \delta_n^{\nu} + \frac{1}{\sqrt{n}} \delta_n^{\beta - 1} + \frac{1}{\sqrt{n}} \delta_n^{\beta + \nu} + \delta_n^{2 \beta - 1}\\
\leq \frac{1}{n} \delta_n^{\nu} + 2\frac{1}{\sqrt{n}} \delta_n^{\beta - 1} + \delta_n^{2 \beta - 1} \lesssim \left( \frac{1}{\sqrt{n}} \sqrt{\sigma_\infty(\delta_n) \sigma_\infty'(\delta_n)} + \sqrt{b_0(\delta_n) b'_0(\delta_n)} \right)^2\\
\leq \frac{2}{n}\sigma_\infty(\delta_n) \sigma_\infty'(\delta_n) +  2b_0(\delta_n) b'_0(\delta_n).
\end{gather}

Therefore, for $\delta_n$ with rate in a neighborhood of $r_{0, \infty}$, we have that
\begin{gather}
\frac{d}{d\delta}  \left(E_{P_0}\left[ \left(\widehat{\Psi}_n(\delta) - \Psi(P_0) \right)^2 \big| S_{2,n} \right]\right)\bigg|_{\delta = \delta_n} = \frac{2}{l_{3,n} - l_{2,n}}\sigma_\infty(\delta_n) \sigma_\infty'(\delta_n) \\
+  2b_0(\delta_n) b'_0(\delta_n)  + o_P \left(\frac{2}{l_{3,n} - l_{2,n}}\sigma_\infty(\delta_n) \sigma_\infty'(\delta_n) +  2b_0(\delta_n) b'_0(\delta_n) \right) \\
= f(\delta_n) + o_P \left(f(\delta_n)\right),
\end{gather}

with $f(\delta_n) \equiv \frac{2}{l_{3,n} - l_{2,n}}\sigma_\infty(\delta_n) \sigma_\infty'(\delta_n) +  2b_0(\delta_n) b'_0(\delta_n)$.

Otherwise stated, for $\delta_n$ with rate in a neighborhood of $r_{0, \infty}$,
\begin{gather}
f(\delta_n)^{-1} \frac{d}{d\delta}  \left( E_{P_0} \left[ \left(\widehat{\Psi}_n(\delta_n) - \Psi(P_0) \right)^2 \big| S_{2,n} \right] \right) \big|_{\delta = \delta_n} \xrightarrow{P} 1.
\end{gather}

Therefore, using dominated convergence, we readily obtain that
\begin{gather}
MSE'_n(\delta_n) \sim_P \frac{2}{l_{3,n} - l_{2,n}}\sigma_\infty(\delta_n) \sigma_\infty'(\delta_n) +  2b_0(\delta_n) b'_0(\delta_n)\\
\sim_P \frac{2}{l_{3,n} - l_{2,n}} C_{\sigma, \infty} C_{\sigma', \infty} \delta_n^{-\gamma + \nu} + \frac{C_{b',0}^2}{\beta_0}\delta_n^{2 \beta - 1}.
\end{gather}

Therefore, 
\begin{gather}
\delta_n^* \sim_P \left(\frac{C_{\sigma,\infty} C_{\sigma', \infty} \beta}{C_{b',0}^2}
\right)^\frac{1}{2\beta - 1 + \gamma - \nu} \left(l_{3,n} - l_{2,n}\right)^{-\frac{1}{2\beta - 1 + \gamma - \nu}}.
\end{gather}

Under assumption \textbf{A7}, using expression \eqref{conditional_MSE_developed_expression} and dominated convergence, we readily obtain that, for $\delta_n$ with a rate in a neighborhood of $r_{0, \infty}$,
\begin{gather}
MSE_n(\delta_n) \sim_P \frac{1}{l_{3,n} - l_{2,n}} \sigma_\infty(\delta_n)^2 + b_0(\delta_n)^2. \label{MSE_n_equivalent}
\end{gather}

Recalling that from lemma \ref{lemma-rate-estimators} $\widehat{\delta}_n/\delta_{0,n}^* \xrightarrow{P} 1$, and injecting the expression of $\widehat{\delta}_{\epsilon, n}$ into \eqref{MSE_n_equivalent} yields the claim.
\end{proof}

\medskip

\subsection{Asymptotic normality of the cross-validated Targeted Maximum Likelihood estimator}

The proof of theorem \ref{asymptotic_normality-CV-TMLE} relies on lemma 2 from \cite{zheng_vdl2010}, which is an equicontinuity result for a certain class of functions. We reproduce  it here for our reader's convenience. We first recall the definition of the entropy of a class of functions $\mathcal{G}$:
\begin{gather}
Entro(\mathcal{G}) \equiv \int_0^\infty \sqrt{\log \sup_\Lambda N\left(\epsilon \left\lVert G \right\rVert_{\Lambda, 2}, \mathcal{G}, L_2(\Lambda) \right)} d\epsilon,
\end{gather}
where $G$ is the envelope of $\mathcal{G}$.

\medskip

\begin{lemma}\label{lemma-CV-TMLE}
Suppose $| \epsilon_n - \epsilon_0 | \xrightarrow{P} 0$ for some $\epsilon_0 \in \mathbb{R}$. For each sample split $B_n$, consider a class of measurable functions of $O$ 
\begin{gather}
\mathcal{G}\left(P^0_{n, B_n}\right) \equiv \left\lbrace g_\epsilon \left(P^0_{n, B_n}\right) \equiv g\left(\epsilon, P^0_{n, B_n}\right) - g(\epsilon_0, P_0) : \epsilon \right\rbrace,
\end{gather}
where the index set contains $\epsilon_n$ with probability tending to one. For a deterministic sequence $\delta_n \rightarrow 0$, define the subclasses 
\begin{gather}
\mathcal{G}_{\delta_n} \left(P^0_{n, B_n}\right) \equiv \left\lbrace g_\epsilon \left(P^0_{n, B_n}\right) \equiv g\left(\epsilon, P^0_{n, B_n}\right) - g(\epsilon_0, P_0) : |\epsilon_n - \epsilon_0| \leq \delta_n \right\rbrace.
\end{gather}

If for determinitic sequences $\delta_n \rightarrow 0$ we have 
\begin{gather}
E \left\lbrace Entro\left(\mathcal{G}_{\delta_n} \left(P^0_{n, B_n}\right)\right) \sqrt{P_0 G\left(\delta_n, P^0_{n, B_n}\right)^2} \right\rbrace \rightarrow 0 \text{ as } n \rightarrow 0,
\end{gather}

where $G\left(\delta_n, P^0_{n, B_n}\right)$ is the envelope of $\mathcal{G}_{\delta_n} \left(P^0_{n, B_n}\right)$, then 
\begin{gather}
\sqrt{n} \left(P^1_{n, B_n} - P_0 \right) \left\lbrace g\left(\epsilon, P^0_{n, B_n}\right) - g(\epsilon_0, P_0) \right\rbrace = o_P(1).
\end{gather}
\end{lemma}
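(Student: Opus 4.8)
The plan is to reduce the assertion to a statement about an ordinary empirical process over an i.i.d.\ sample by conditioning on the training data, and then to control that process with a maximal inequality governed by the uniform entropy integral of the function class. This is exactly the route taken in \cite{zheng_vdl2010}, whose Lemma~2 this is; I reproduce the argument here because the CV-TMLE analysis relies on it.

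First I would fix a split $B_n$ and condition on the $\sigma$-field $\mathcal{A}_n$ generated by the training observations $\{O_i : i \in \mathcal{T}_n\}$ together with $B_n$. Given $\mathcal{A}_n$, the plug-in $\widehat{P}^0_{n, B_n}$ is deterministic, the validation observations $\{O_i : i \in \mathcal{V}_n\}$ are i.i.d.\ $P_0$ and independent of $\mathcal{A}_n$, and $P^1_{n, B_n}$ is their empirical measure over $n_1 \equiv |\mathcal{V}_n|$ points; for the fixed-proportion splitting used here $n_1 \asymp n$, so it is enough to bound $\sqrt{n_1}(P^1_{n, B_n} - P_0)\{g(\epsilon_n, \widehat{P}^0_{n, B_n}) - g(\epsilon_0, P_0)\}$. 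The point of conditioning is that, given $\mathcal{A}_n$, the class $\mathcal{G}_{\delta_n}(\widehat{P}^0_{n, B_n})$ is a \emph{fixed} (non-random) class of functions, and on an event of probability tending to one --- because $|\epsilon_n - \epsilon_0| \le \delta_n$ and the index set contains $\epsilon_n$ with probability $\to 1$ --- the random integrand $g(\epsilon_n, \widehat{P}^0_{n, B_n}) - g(\epsilon_0, P_0)$ is a member of that class, so the quantity of interest is dominated by $\sup_{g \in \mathcal{G}_{\delta_n}(\widehat{P}^0_{n, B_n})} |\mathbb{G}_{n_1} g|$, where $\mathbb{G}_{n_1} \equiv \sqrt{n_1}(P^1_{n, B_n} - P_0)$.

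Next I would invoke a standard maximal inequality: the conditional expectation given $\mathcal{A}_n$ of $\sup_{g \in \mathcal{G}_{\delta_n}(\widehat{P}^0_{n, B_n})} |\mathbb{G}_{n_1} g|$ is bounded, up to a universal constant, by $Entro(\mathcal{G}_{\delta_n}(\widehat{P}^0_{n, B_n})) \, \sqrt{P_0 G(\delta_n, \widehat{P}^0_{n, B_n})^2}$, with $G(\delta_n, \widehat{P}^0_{n, B_n})$ the envelope of the class. Taking expectations over $\mathcal{A}_n$ and then averaging over the finitely many splits turns the right-hand side into precisely the quantity assumed to vanish in the hypothesis, so $E\big[\sup_{g} |\mathbb{G}_{n_1} g|\big] \to 0$; a conditional Markov inequality then yields $\sup_{g} |\mathbb{G}_{n_1} g| \xrightarrow{P} 0$, hence, discarding the negligible complement event and using $n_1 \asymp n$, $\sqrt{n}(P^1_{n, B_n} - P_0)\{g(\epsilon_n, \widehat{P}^0_{n, B_n}) - g(\epsilon_0, P_0)\} = o_P(1)$, and averaging over $B_n$ preserves this.

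I expect the only real difficulties to be bookkeeping rather than conceptual. The two sources of data-dependence --- the plug-in nuisance $\widehat{P}^0_{n, B_n}$ sitting \emph{inside} the function class and the data-driven index $\epsilon_n$ --- must be handled simultaneously with the randomness of the validation empirical measure, and it is the conditioning on $\mathcal{A}_n$ that decouples them, at the price of the customary measurability / outer-expectation caveats attached to suprema over function classes and of checking that the constant in the maximal inequality does not depend on the random realization of $\widehat{P}^0_{n, B_n}$. A minor additional point is that the hypothesis is stated for a deterministic sequence $\delta_n \to 0$, so one must first exhibit such a sequence with $P(|\epsilon_n - \epsilon_0| \le \delta_n) \to 1$ --- possible since $\epsilon_n - \epsilon_0 = o_P(1)$ --- in order for the localization to $\mathcal{G}_{\delta_n}$ to be legitimate.
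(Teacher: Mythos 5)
The paper does not prove this lemma: it states it verbatim as Lemma~2 of Zheng and van der Laan (2010) and refers the reader there, so there is no "paper's own proof" to compare against. Your sketch is a faithful reconstruction of the standard argument behind that result --- conditioning on the training fold to freeze $\widehat{P}^0_{n,B_n}$, localizing the index to $\mathcal{G}_{\delta_n}$ on an event of probability tending to one (choosing a deterministic $\delta_n$ with $P(|\epsilon_n - \epsilon_0| \le \delta_n) \to 1$, which exists because $\epsilon_n - \epsilon_0 = o_P(1)$), bounding the conditional expected supremum by the uniform-entropy integral times the envelope $L_2$-norm, and finishing with Markov's inequality --- and correctly flags the measurability/outer-expectation caveats.
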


\medskip

\begin{proof}[Proof of theorem \ref{asymptotic_normality-CV-TMLE}]
Notice that, by lemma \ref{lemma-rate-estimators}, we have that $\hat{\delta}_n/\delta_{0, \infty, n} \xrightarrow{p} 1$ and $\hat{\delta}_n^{\hat{\gamma}_n}/\delta_{0, \infty, n}^{\gamma_{0, \infty}} \xrightarrow{p} 1$. 

Observe that
\begin{gather}
\widehat{\Psi}_n^{CV-TMLE} - \Psi(P_0) = E_{B_n} \Psi(\widehat{P}^*_{n, B_n, \hat{\delta}_n}) - \Psi_{\hat{\delta}_n}(P_0) \\
+ \Psi_{\hat{\delta}_n}(P_0) - \Psi(P_0) \\
= E_{B_n} \left(P^1_{n, B_n} - P_0\right) D^*_{\hat{\delta}_n} (\widehat{P}^*_{n, B_n, \hat{\delta}_n}) - \Psi_{\hat{\delta}_n}(P_0) \label{empirical_process_CV-TMLE}\\
+ E_{B_n} R_{\hat{\delta}_n} (\widehat{P}^*_{n, B_n, \hat{\delta}_n}, P_0) \label{remainder_CV-TMLE} \\
+ \Psi_{\hat{\delta}_n}(P_0) - \Psi(P_0). \label{bias_CV-TMLE}
\end{gather}

$\bullet$ Analysis of the bias term \eqref{bias_CV-TMLE} is exactly the same as in the proof of theorem \ref{asymptotic_normality_thm}. We thus have that
\begin{gather}
\widehat{C}_{\sigma, n}^{-1} \sqrt{n} \hat{\delta}_n^{\hat{\gamma}_n} \left(\Psi_{\hat{\delta}_n}(P_0) - \Psi(P_0) \right) = o_P(1).
\end{gather}

$\bullet$ Using the same arguments as in the proof of theorem \ref{asymptotic_normality_thm} proves that $\widehat{C}_{\sigma, n}^{-1} \hat{\delta}_n^{\hat{\gamma}_n} R_{\hat{\delta}_n}(\widehat{P}^*_{n, B_n, \hat{\delta}_n}, P_0) = o_P(1)$. Since $B_n$ ranges over a finite set, we have that
\begin{gather}
\widehat{C}_{\sigma, n}^{-1} \hat{\delta}_n^{\hat{\gamma}_n} E_{B_n}R_{\hat{\delta}_n}(\widehat{P}^*_{n, B_n, \hat{\delta}_n}, P_0) = o_P(1).
\end{gather}

$\bullet$ Let us now turn to the analysis of the empirical process term \eqref{empirical_process_CV-TMLE}. We have that
\begin{gather}
E_{B_n} \left(P^1_{n, B_n} - P_0\right) D^*_{\hat{\delta}_n} (\widehat{P}^*_{n, B_n, \hat{\delta}_n}) - \Psi_{\hat{\delta}_n}(P_0) \\
= E_{B_n} \left(P^1_{n, B_n} - P_0\right) D^*_{\hat{\delta}_n} \left(P_\infty\right) \label{emp_proc-CV-TMLE-I} \\
+ E_{B_n} \left(P^1_{n, B_n} - P_0\right) \left(D^*_{\hat{\delta}_n} (\widehat{P}^*_{n, B_n, \hat{\delta}_n}) - D^*\left(P_\infty\right) \right). \label{emp_proc-CV-TMLE-II}
\end{gather}

Let us characterize term \eqref{emp_proc-CV-TMLE-II}. Remember that $\widehat{P}^*_{n, B_n, \hat{\delta}_n} = \widehat{P}_{n, B_n, \widehat{\delta}_n, \epsilon_n}$ and that $\widehat{P}_{n, B_n, \delta, \epsilon}$ depends on the sample only through $\widehat{P}^0_{n, B_n}$. Thus, by application of lemma \ref{lemma-CV-TMLE}, and using that 
$\frac{\hat{\delta}_n^{\hat{\gamma}_n}}{\delta_{0, \infty, n}^{\gamma_{0, \infty}}} \xrightarrow{p} 1$, we obtain
\begin{gather}
\sqrt{n} \hat{\delta}_n^{\hat{\gamma}_n} E_{B_n} \left(P^1_{n, B_n} - P_0\right) \left(D^*_{\hat{\delta}_n} (\widehat{P}^*_{n, B_n, \hat{\delta}_n}) - D^*\left(P_\infty\right) \right) = o_P(1).
\end{gather}

\medskip

We now analyze term \eqref{emp_proc-CV-TMLE-I}. By the Lindeberg central limit theorem for triangular arrays, we have that $C_{\sigma, 0, \infty}^{-1} \sqrt{n} \delta_{0, \infty, n}^{\gamma_{0, \infty}} E_{B_n} (P^1_{B_n, n} - P_0 ) D^*_{\delta_{0, \infty, n}}(P_\infty) \xrightarrow{d} \mathcal{N}(0,1).$
Besides,
\begin{gather}
\sqrt{n} \widehat{\delta}_n^{\hat{\gamma}_n} E_{B_n} \left(P^1_{n, B_n} - P_0 \right) D^*_{\hat{\delta}_n}(P_\infty) \\
- \sqrt{n} \delta_{0, \infty, n}^{\gamma_{0, \infty}} E_{B_n} \left(P^1_{n, B_n} - P_0 \right) D^*_{\delta_{0, \infty, n}}(P_\infty)\\
= \sqrt{n} \left(\hat{\delta}_n^{\hat{\gamma}_n} - \delta_{0, \infty, n}^{\gamma_{0, \infty}} \right) D^*_{\delta_{0, \infty, n}}(P_\infty) \label{emp_proc-CV-TMLE-III}\\
+ \sqrt{n} \hat{\delta}_n^{\hat{\gamma}_n} E_{B_n} \left(P^1_{n, B_n} - P_0 \right)  \left(D^*_{\hat{\delta}_n}(P_\infty) - D^*_{\delta_{0, \infty, n}}(P_\infty) \right).\label{emp_proc-CV-TMLE-IV}
\end{gather}

Since $\hat{\delta}_n^{\hat{\gamma}_n} - \delta_{0, \infty, n}^{\gamma_{0, \infty}} = o_P \left(\delta_{0, \infty, n}^{\gamma_{0, \infty}}\right)$, we have that \eqref{emp_proc-CV-TMLE-III} is $o_P(1)$.

Besides, using assumption \textbf{A11}, we have that $\delta_{0, \infty, n} D^*_{\hat{\delta}_n}(P_\infty) - L_\infty = \delta_{0, \infty, n}/\hat{\delta}_n (\hat{\delta}_n D^*_{\hat{\delta}_n}(P_\infty) - L_\infty ) + (\delta_{0, \infty, n}/\hat{\delta}_n - 1) L_\infty = o_P(1)$. Thus
\begin{gather}
\left\lVert \delta_{0, \infty, n} \left( D^*_{\hat{\delta}_n}(P_\infty) - D^*_{\delta_{0, \infty, n}}(P_\infty) \right) \right\rVert_{L_2(P_0)}\\
\leq \left\lVert \delta_{0, \infty, n}  D^*_{\hat{\delta}_n}(P_\infty) - L_\infty \right\rVert_{L_2(P_0)}+ \left\lVert L_\infty - \delta_{0, \infty, n} D^*_{\delta_{0, \infty, n}}(P_\infty) \right\rVert_{L_2(P_0)} \\
= o_P(1).
\end{gather}

Therefore, by lemma \ref{lemma-CV-TMLE}, \eqref{emp_proc-CV-TMLE-IV} is $o_P(1)$. 
\end{proof}

\medskip

\subsection{Verification of the examples' hypothesis}

The following lemma states a useful inequality in the context of example 2 (mean counterfactual outcome $EY_d$.)

\begin{lemma}\label{lemma_key_inequality_example2}
Consider the setting of example 2. We have that
\begin{gather}
|\sigma_n(\delta) - \sigma_\infty(\delta)|^2 \leq P_0 \frac{g_0(1|W)}{g_{0, \delta}^2(1|W)} \left(\hat{\bar{Q}}_n - \bar{Q}_\infty \right)^2.
\end{gather}

\end{lemma}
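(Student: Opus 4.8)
The plan is to reduce the claim to a pointwise bound on the difference of the relevant canonical gradients and then evaluate that difference explicitly, exploiting that $A$ is binary. Write $D^*_\delta(P) = \tilde D_\delta(P) - \Psi_\delta(P)$, where $\tilde D_\delta(P)(O) = \frac{A}{g_{0,\delta}(1|W)}(Y - \bar Q(A,W)) + \frac{g_0(1|W)}{g_{0,\delta}(1|W)}\bar Q(1,W)$ is the part depending on $O$ and $\Psi_\delta(P)$ is a constant. Centering under $P_0$ annihilates the constant, so $\sigma_n(\delta) = \|\tilde D_\delta(\widehat P_n) - P_0\tilde D_\delta(\widehat P_n)\|_{L_2(P_0)}$ and likewise $\sigma_\infty(\delta) = \|\tilde D_{\delta,\infty} - P_0\tilde D_{\delta,\infty}\|_{L_2(P_0)}$, where $\tilde D_{\delta,\infty}$ is the same expression with $\bar Q_\infty$ in place of $\widehat{\bar Q}_n$ (using that $g_0$ is known).

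First I would apply the reverse triangle inequality for $\|\cdot\|_{L_2(P_0)}$ to get
\[
|\sigma_n(\delta) - \sigma_\infty(\delta)| \le \big\| \big(\tilde D_\delta(\widehat P_n) - P_0\tilde D_\delta(\widehat P_n)\big) - \big(\tilde D_{\delta,\infty} - P_0\tilde D_{\delta,\infty}\big) \big\|_{L_2(P_0)},
\]
and observe that the right-hand side is the $L_2(P_0)$-norm of the $P_0$-centering of $\tilde D_\delta(\widehat P_n) - \tilde D_{\delta,\infty}$; since a variance is at most the corresponding second moment, this yields $|\sigma_n(\delta) - \sigma_\infty(\delta)|^2 \le P_0 (\tilde D_\delta(\widehat P_n) - \tilde D_{\delta,\infty})^2$.

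It then remains to compute that second moment. The difference $\tilde D_\delta(\widehat P_n) - \tilde D_{\delta,\infty}$ involves only $\widehat{\bar Q}_n - \bar Q_\infty$, and on $\{A=1\}$ the inverse-weighting term and the plug-in term combine (because $\bar Q(A,W) = \bar Q(1,W)$ there) into $\frac{g_0(1|W)-1}{g_{0,\delta}(1|W)}\big(\widehat{\bar Q}_n(1,W) - \bar Q_\infty(1,W)\big)$, while on $\{A=0\}$ the difference equals $\frac{g_0(1|W)}{g_{0,\delta}(1|W)}\big(\widehat{\bar Q}_n(1,W) - \bar Q_\infty(1,W)\big)$. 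Conditioning on $W$ and using $P_0(A=1|W)=g_0(1|W)$, $1 - g_0(1|W) = g_0(0|W)$, and $g_0(0|W)+g_0(1|W)=1$, the conditional second moment collapses to $\frac{g_0(1|W)\,g_0(0|W)}{g_{0,\delta}^2(1|W)}\big(\widehat{\bar Q}_n(1,W) - \bar Q_\infty(1,W)\big)^2$. Bounding $g_0(0|W)\le 1$ and integrating over $W$ gives the stated inequality.

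I do not expect a real obstacle: the argument is a reverse-triangle-inequality reduction followed by a short conditional-expectation computation. The only point needing care is the case split on $A$ — in particular noticing that the two $\bar Q$-difference terms combine on $\{A=1\}$, and that the resulting weight $g_0(1|W)\,g_0(0|W)$ is dominated by $g_0(1|W)$. (If one instead reads $\widehat{\bar Q}_n - \bar Q_\infty$ in the statement as a function of $(A,W)$ rather than of $W$ alone, the same chain of inequalities holds, with an extra nonnegative $\{A=0\}$ contribution on the right-hand side.)
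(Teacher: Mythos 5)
Your proof is correct and essentially matches the paper's: both arguments reduce to the bound $|\sigma_n(\delta)-\sigma_\infty(\delta)|\le\big\|\Delta_{\delta,n,\infty}-P_0\Delta_{\delta,n,\infty}\big\|_{L_2(P_0)}$ with $\Delta_{\delta,n,\infty}=D^*_\delta(\widehat P_n)-D^*_{\delta,\infty}$, and then evaluate that (conditional) second moment, arriving at the weight $g_0(1|W)\,g_0(0|W)/g_{0,\delta}^2(1|W)$ before bounding $g_0(0|W)\le 1$. The only cosmetic differences are that you reach the intermediate bound via the $L_2$ reverse triangle inequality together with ``variance $\le$ second moment'' (which here loses nothing, since the difference is already $P_0$-mean-zero) rather than the paper's difference-of-squares plus Cauchy--Schwarz route, and you evaluate the norm by conditioning on $(A,W)$ and case-splitting on $A$ rather than by expanding the square of the two-term sum.
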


\begin{proof}
Observe that
\begin{gather}
\left|\sigma_n^2(\delta) - \sigma^2_\infty(\delta) \right| = \left| P_0 \left( D^*_\delta (\widehat{P}_n ) - P_0 D^*_\delta (\widehat{P}_n ) \right)^2  - P_0 \left( D^*_{\delta, \infty} - P_0 D^*_{\delta, \infty} \right)^2 \right|\\
= P_0 \bigg[ \left\lbrace\left( D^*_\delta (\widehat{P}_n ) - P_0 D^*_\delta (\widehat{P}_n ) \right)  - \left( D^*_{\delta, \infty} - P_0 D^*_{\delta, \infty} \right) \right\rbrace \\
\times \left\lbrace\left( D^*_\delta (\widehat{P}_n ) - P_0 D^*_\delta (\widehat{P}_n ) \right)  + \left( D^*_{\delta, \infty} - P_0 D^*_{\delta, \infty} \right)  \right\rbrace \bigg]\\
\leq  \sqrt{P_0 \left(\Delta_{\delta, n, \infty} - P_0 \Delta_{\delta, n, \infty} \right)^2} \left(\sigma_n(\delta) + \sigma_\infty(\delta) \right),
\end{gather}

where  
\begin{equation}
\Delta_{\delta, n, \infty} \equiv D^*_\delta\left( \widehat{P}_n \right) - D^*_{\delta, \infty},
\end{equation}
and the last inequality comes from Cauchy-Schwarz.

Therefore,
\begin{equation}
\left| \sigma_n(\delta) - \sigma_\infty(\delta) \right| \leq \sqrt{P_0 \left(\Delta_{\delta, n, \infty} - P_0 \Delta_{\delta, n, \infty} \right)^2}.
\end{equation}

For any $P$, we have that 
\begin{gather}\label{DminusP0D}
D^*_\delta(P) - P_0 D^*_\delta(P) = \frac{A}{g_{0, \delta}(A|W)}\left(\bar{Q}_0 - \bar{Q}\right) \\
+ \frac{g_0(1|W)}{g_{0, \delta}(1|W)} \bar{Q} - P_0 \frac{g_0(1|W)}{g_{0, \delta}(1|W)} \bar{Q}.
\end{gather}

Thus 
\begin{gather}
\Delta_{\delta, n, \infty} - P_0 \Delta_{\delta, n, \infty} = \frac{A}{g_{0, \delta}(A|W)}\left(\bar{Q}_\infty - \hat{\bar{Q}}_n\right) + \frac{g_0(1|W)}{g_{0, \delta}(1|W)} \left(\hat{\bar{Q}}_n - \bar{Q}_\infty \right).
\end{gather}

Therefore
\begin{gather}
P_0 \left(\Delta_{\delta, n, \infty} - P_0 \Delta_{\delta, n, \infty} \right)^2 = P_0 \frac{g_0(1|W)}{g_{0, \delta}^2(1|W)} \left(\hat{\bar{Q}}_n - \bar{Q}_\infty \right)^2 \\
+ P_0 \frac{g_0^2(1|W)}{g_{0, \delta}^2(1|W)} \left(\hat{\bar{Q}}_n - \bar{Q}_\infty \right)^2 - 2 P_0 \frac{g_0^2(1|W)}{g_{0, \delta}^2(1|W)} \left(\hat{\bar{Q}}_n - \bar{Q}_\infty \right)^2\\
\leq P_0 \frac{g_0(1|W)}{g_{0, \delta}^2(1|W)} \left(\hat{\bar{Q}}_n - \bar{Q}_\infty \right)^2.
\end{gather}

\end{proof}

\begin{proof}[Proof of lemma \ref{lemma-preA2-example2}]
Observe that $g_0(1|W)/g_{0, \delta}^2(1|W) \leq \delta^{-1}$. The result then directly follows by injecting this in the inequality from lemma \ref{lemma_key_inequality_example2}.
\end{proof}

\begin{proof}[Proof of lemma \ref{lemma-sigma_n_sigma_inf-EY1-example}, mean counterfactual outcome case]
From lemma \ref{lemma_key_inequality_example2}, we have that
\begin{gather}
|\sigma_n(\delta) - \sigma_\infty(\delta) |^2 \leq \left(P_0 \frac{g_0(1|W)}{g_{0, \delta}^2(1|W)} \left(\bar{Q}_0 (1 - \bar{Q}_0) + \left(\bar{Q}_\infty - \bar{Q}_0\right)^2 \right) \right) \\
\times 
\bigg\| \frac{\left(\hat{\bar{Q}}_n - \bar{Q}_\infty \right)^2}{\bar{Q}_0 (1 - \bar{Q}_0) + \left(\bar{Q}_\infty - \bar{Q}_0\right)^2} \bigg\|_\infty. \label{bound_on_DeltaVariance}
\end{gather}

Besides, from \eqref{DminusP0D}, we have that
\begin{gather}
\sigma_\infty^2(\delta) = P_0 \frac{g_0(1|W)}{g_{0, \delta}(1|W)^2} \left(\bar{Q}_0 (1 - \bar{Q}_0) + \left(\bar{Q_0} - \bar{Q}_\infty\right)^2 \right) \\
+ P_0 \left(\frac{g_0(1|W)}{g_{0, \delta}(1|W)}\bar{Q}_\infty - P_0 \frac{g_0(1|W)}{g_{0, \delta}(1|W)}\bar{Q}_\infty \right)^2 \\
+2 P_0 \left\lbrace\frac{g_0(1|W)}{g_{0, \delta}(1|W)} \left(\bar{Q}_0 - \bar{Q}_\infty \right) \left(\frac{g_0(1|W)}{g_{0, \delta}(1|W)} \bar{Q} -  P_0 \left(\frac{g_0(1|W)}{g_{0, \delta}(1|W)} \bar{Q}_0 \right) \right) \right\rbrace. \label{sigma_infty_expression}
\end{gather}

Denote $I^2_\infty(\delta) \equiv P_0 \frac{g_0(1|W)}{g_{0, \delta}(1|W)^2} \left(\bar{Q}_0 (1 - \bar{Q}_0) + \left(\bar{Q_0} - \bar{Q}_\infty\right)^2 \right)$. 

\medskip

Let $\delta_n$ be a non-negative sequence that converges to zero. The inequality \eqref{bound_on_DeltaVariance} shows that $|\sigma_n(\delta_n) - \sigma_\infty(\delta_n)| = o_P\left(I_\infty(\delta_n)\right)$.

\medskip

We now distinguish two situations.

$\bullet$ If $\sigma_\infty(\delta) \xrightarrow{\delta \rightarrow 0} \infty$ then $\sigma_\infty^2(\delta) \sim I^2_\infty(\delta)$, since the remaining terms in expression \eqref{sigma_infty_expression} are bounded. Therefore, we have $\sigma_n(\delta_n) - \sigma_\infty(\delta_n) = o_P \left(\sigma_\infty(\delta_n)\right)$, which is the desired result.

$\bullet$ If $\sigma_\infty(\delta) \xrightarrow{\delta \rightarrow 0} C < \infty$, then $I^2_\infty(\delta)$ is bounded as a difference of bounded terms (as can be seen from expression \eqref{sigma_infty_expression}. Therefore $\sigma_n(\delta_n) - \sigma_\infty(\delta_n) = o_P(1)$, and since $C > 0$, this implies $\sigma_n(\delta_n) - \sigma_\infty(\delta_n) = o_P \left(\sigma_\infty(\delta_n)\right)$.

\end{proof}

The following lemma provides a useful inequality in the context of the dose response curve example.

\begin{lemma}\label{lemma_key_inequality_example3}
Consider the setting of example 3. We have that
\begin{gather}
|\sigma_n(\delta) - \sigma_\infty(\delta)|^2 \leq P_0 \int \frac{K^2_{a_0, \delta}(a)}{g_0(a|W)} \left(\hat{\bar{Q}}_n - \bar{Q}_\infty \right)^2.
\end{gather}
\end{lemma}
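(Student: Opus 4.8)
The plan is to reproduce, \emph{mutatis mutandis}, the argument used to prove Lemma~\ref{lemma_key_inequality_example2}. First I would reduce the difference of standard deviations to an $L_2(P_0)$ distance between centered gradients: writing $\sigma_n^2(\delta) - \sigma_\infty^2(\delta)$ as $P_0\big(D^*_{a_0,\delta}(\widehat{P}_n) - P_0 D^*_{a_0,\delta}(\widehat{P}_n)\big)^2 - P_0\big(D^*_{a_0,\delta,\infty} - P_0 D^*_{a_0,\delta,\infty}\big)^2$, I would factor this difference of squares as the product of the sum and the difference of the two centered gradients, bound the sum factor in $L_2(P_0)$ by $\sigma_n(\delta) + \sigma_\infty(\delta)$ via Cauchy--Schwarz, and divide through, obtaining
\[
|\sigma_n(\delta) - \sigma_\infty(\delta)| \leq \sqrt{P_0\left(\Delta_{\delta,n,\infty} - P_0 \Delta_{\delta,n,\infty}\right)^2}, \qquad \Delta_{\delta,n,\infty} \equiv D^*_{a_0,\delta}(\widehat{P}_n) - D^*_{a_0,\delta,\infty}.
\]
So it suffices to bound the right-hand side by $P_0 \int K_{a_0,\delta}^2(a)\, g_0(a|W)^{-1} (\hat{\bar{Q}}_n - \bar{Q}_\infty)^2$.

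Next I would compute the centered gradient explicitly. Using the form of $D^*_{a_0,\delta}(P)$ from Example~3, the constant $\Psi_{a_0,\delta}(P)$ cancels in $D^*_{a_0,\delta}(P) - P_0 D^*_{a_0,\delta}(P)$, and splitting $Y - \bar{Q} = (Y - \bar{Q}_0) + (\bar{Q}_0 - \bar{Q})$ shows the only $P$-dependence sits in $\frac{K_{a_0,\delta}(A)}{g_0(A|W)}(\bar{Q}_0 - \bar{Q})$, in $\int_a K_{a_0,\delta}(a)\bar{Q}(a,W)\,da$, and in their $P_0$-means. Taking the difference between $\widehat{P}_n$ and its limit and setting $f \equiv \hat{\bar{Q}}_n - \bar{Q}_\infty$, I expect
\[
\Delta_{\delta,n,\infty} - P_0\Delta_{\delta,n,\infty} = -\frac{K_{a_0,\delta}(A)}{g_0(A|W)}\, f(A,W) + \int_a K_{a_0,\delta}(a)\, f(a,W)\,da,
\]
where the two $P_0$-centering terms cancel one another: integrating $\frac{K_{a_0,\delta}(A)}{g_0(A|W)} f(A,W)$ over $A$ given $W$ under $g_0$ produces exactly $\int_a K_{a_0,\delta}(a) f(a,W)\,da$, so $P_0\big[\frac{K_{a_0,\delta}(A)}{g_0(A|W)}f\big]$ equals $E_{Q_{W,0}}\big[\int_a K_{a_0,\delta}(a)f(a,W)\,da\big]$. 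The second term above is $\sigma(W)$-measurable, and the same conditional-expectation identity gives $E[\,\text{first term}\mid W\,] = -\int_a K_{a_0,\delta}(a) f(a,W)\,da$.

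Finally I would expand the square. With $X$ the first ($A$-dependent) term and $Y$ the second ($W$-measurable) term, $P_0(X+Y)^2 = P_0 X^2 + 2 P_0(XY) + P_0 Y^2$; conditioning on $W$ and using $E[X\mid W] = -Y$ gives $P_0(XY) = -P_0 Y^2$, hence $P_0(\Delta_{\delta,n,\infty} - P_0\Delta_{\delta,n,\infty})^2 = P_0 X^2 - P_0 Y^2 \leq P_0 X^2$. Computing $P_0 X^2$ by first integrating over $A$ given $W$, so that the factor $g_0(a|W)^{-2}$ absorbs one factor $g_0(a|W)$, yields $P_0 X^2 = P_0 \int_a \frac{K_{a_0,\delta}^2(a)}{g_0(a|W)} f(a,W)^2\,da$, which combined with the Cauchy--Schwarz bound above is exactly the claimed inequality.

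I do not anticipate a real obstacle; the only delicate point is the bookkeeping of the $P_0$-centering and of the $Q_{W,0}$- versus $Q_{W,n}$-expectations, i.e.\ checking that everything beyond the two leading terms cancels, precisely as in Lemma~\ref{lemma_key_inequality_example2}. Note that the positivity assumption $\|g_0^{-1}\|_{L_\infty(P_0)} < \infty$ is \emph{not} needed for this inequality itself; it is only invoked afterwards, in Lemma~\ref{lemma-preA2-example3}, to convert the right-hand side into the rate appearing in \textbf{A2}.
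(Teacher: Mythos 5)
Your proposal is correct and follows essentially the same route as the paper's own proof: the same reduction of $|\sigma_n(\delta)-\sigma_\infty(\delta)|$ to $\|\Delta_{\delta,n,\infty}-P_0\Delta_{\delta,n,\infty}\|_{L_2(P_0)}$ via the difference-of-squares and Cauchy--Schwarz step of Lemma~\ref{lemma_key_inequality_example2}, the same explicit form for $\Delta_{\delta,n,\infty}-P_0\Delta_{\delta,n,\infty}$, and the same observation that conditioning on $W$ makes the cross term negate the $W$-measurable square, leaving $P_0\int_a K_{a_0,\delta}^2(a)g_0(a|W)^{-1}(\hat{\bar{Q}}_n-\bar{Q}_\infty)^2\,da$ as an upper bound. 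Your side remark that positivity is not needed for this inequality is also accurate.
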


\begin{proof}
Using the same notations and following the same derivation as in the proof of lemma \ref{lemma_key_inequality_example2}, we have that
\begin{gather}
\left| \sigma_n(\delta) - \sigma_\infty(\delta) \right| \leq \sqrt{P_0 \left(\Delta_{\delta, n, \infty} - P_0 \Delta_{\delta, n, \infty} \right)^2}.
\end{gather}

We will drop the $a_0$ subscript for notational convenience.

\medskip 

For all $P$, we have that
\begin{gather}\label{DminusP0_DRC}
D^*_\delta(P) - P_0 D^*_\delta(P) = \frac{K_{\delta, a_0}(A)}{g_0(A|W)} \left(Y - \bar{Q}\right) \\
+ \int_a K_{a_0, \delta	}(a) \bar{Q}(a, W) da - \Psi_\delta(P_0).
\end{gather}

Thus, 
\begin{gather}
\Delta_{\delta, n, \infty} - P_0 \Delta_{\delta, n, \infty} = \frac{K_{a_0, \delta}(A)}{g_0(A|W)} \left(\bar{Q}_\infty - \hat{\bar{Q}}_n \right) + \int_a K_{a_0, \delta}(a)\left(\hat{\bar{Q}}_n - \bar{Q}_\infty \right) da.
\end{gather}

Therefore,

\begin{gather}
P_0 \left(\Delta_{\delta, n, \infty} - P_0 \Delta_{\delta, n, \infty} \right)^2 = P_0 \int_a \frac{K_{a_0, \delta}^2(a)}{g_0(a|W)}\left(\hat{\bar{Q}}_n - \bar{Q}_\infty \right)^2 da \\
+ P_0 \left(K_{a_0, \delta}(a)\left(\hat{\bar{Q}}_n - \bar{Q}_\infty \right) da\right)^2\\
- 2P_0 \left\lbrace \frac{K_{a_0, \delta}(A)}{g_0(A|W)}\left(\bar{Q}_\infty - \hat{\bar{Q}}_n \right) \int_a K_{a_0, \delta}(a)\left(\hat{\bar{Q}}_n - \bar{Q}_\infty \right) da\right\rbrace\\
= P_0 \int_a \frac{K_{a_0, \delta}^2(a)}{g_0(a|W)}\left(\hat{\bar{Q}}_n - \bar{Q}_\infty \right)^2 da - P_0 \left(K_{a_0, \delta}(a)\left(\hat{\bar{Q}}_n - \bar{Q}_\infty \right) da\right)^2\\
\leq P_0 \int_a \frac{K_{a_0, \delta}^2(a)}{g_0(a|W)}\left(\hat{\bar{Q}}_n - \bar{Q}_\infty \right)^2 da.
\end{gather}

\end{proof}

\begin{proof}[Proof of lemma \ref{lemma-preA2-example3}]
From lemma \ref{lemma_key_inequality_example3}, we have that
\begin{gather}
|\sigma_{2,n}(\delta) - \sigma_\infty(\delta)|^2 \leq P_0 \int \frac{K^2_{a_0, \delta}(a)}{g_0(a|W)} \left(\hat{\bar{Q}}_n - \bar{Q}_\infty \right)^2 da \\
\leq \left(\frac{1}{\delta^2} P_0\int K^2 \left(\frac{a - a_0}{\delta} \right) g_0^{-1}(a|W) da\right) \|\hat{\bar{Q}}_n - \bar{Q}_\infty\|_{L_\infty(P_0)}^2.
\end{gather}

A change of variables gives that
\begin{gather}
\frac{1}{\delta^2} P_0\int K^2\left(\frac{a-a_0}{\delta}\right)\left(g_0(a|W)\right)^{-1} da = \frac{1}{\delta} P_0 \int K^2(u) \left(g_0(a_0 + \delta u|W)\right)^{-1} du \\
\leq \delta^{-1}\|g_0^{-1}\|_\infty\int K^2(u) du.
\end{gather}
Hence the result.
\end{proof}

\begin{proof}[Proof of lemma \ref{lemma-sigma_n_sigma_inf-EY1-example}, dose-response curve example.]
From lemma \ref{lemma_key_inequality_example3}, we have that
\begin{gather}
|\sigma_{n}(\delta) - \sigma_\infty(\delta)|^2 \leq \left(P_0 \int_a \frac{K_{a_0, \delta}^2(a)}{g_0(a|W)}\left(\bar{Q}_0 (1 - \bar{Q}_0 ) + \left(\bar{Q}_\infty - \bar{Q}_0\right)^2 \right)da \right) \\
\times \left\lVert \frac{\left(\hat{\bar{Q}}_n - \bar{Q}_\infty\right)^2}{\bar{Q}_0 (1 - \bar{Q}_0 ) + \left(\bar{Q}_\infty - \bar{Q}_0 \right)^2} \right\rVert_{L_\infty(P_0)}. \label{sigma_diff_bound_lemma2DRC}
\end{gather}

Besides, from \eqref{DminusP0_DRC}, we have 
\begin{gather}
P_0 \left(D^*_\delta(P_\infty) - P_0 D^*_\delta(P_\infty) \right)^2 = P_0 \int_a \frac{K_{a_0, \delta}^2(a)}{g_0(a|W)}\left(\bar{Q}_0 (1 - \bar{Q}_0) + \left(\bar{Q}_0 - \bar{Q}_\infty\right)^2 \right) da \\
+ P_0 \left( \int_a K_{a_0, \delta}(a) \bar{Q}(a, W) da - \Psi_\delta(P_0) \right)^2\\
+ 2 P_0 \left\lbrace\int_a K_{a_0, \delta}(a) \bar{Q}(a, W) da \left(\int_a K_{a_0, \delta}(a) \bar{Q}(a, W) da - \Psi_\delta(P_0) \right) \right\rbrace.
\end{gather}

One readily shows that the first term is equivalent, as $\delta$ converges to zero, to $C \delta^{-1}$ for some $C > 0$, whereas the two remaining terms are bounded. Therefore, for any non negative sequence $\delta_n$ that converges to zero, we have that $\sigma_\infty^2(\delta) \sim I^2_\infty(\delta_n) \equiv P_0 \int_a \frac{K_{a_0, \delta_n}^2(a)}{g_0(a|W)}\left(\bar{Q}_0 (1 - \bar{Q}_0) + \left(\bar{Q}_0 - \bar{Q}_\infty\right)^2 \right) da$. Besides, from equation \eqref{sigma_diff_bound_lemma2DRC}, we have $\sigma_n(\delta_n) - \sigma_\infty(\delta_n) = o_P\left(I_\infty(\delta_)\right)$. Therefore, we have proved that $\sigma_n(\delta_n) - \sigma_\infty(\delta_n) = o_P \left(\sigma_\infty(\delta_n) \right)$, which is the wished result.
\end{proof}

\end{document}